\documentclass{amsart}
\usepackage[PostScript=dvips]{diagrams}
\usepackage{amsmath,amstext,amscd, amssymb}
\usepackage{enumerate}
\usepackage{epsfig, psfrag, color, multicol, graphicx}
\voffset=5mm
\oddsidemargin=17pt \evensidemargin=17pt
\headheight=9pt     \topmargin=26pt
\textheight=624pt   \textwidth=433.8pt

\theoremstyle{plain}

\newtheorem{thm}{Theorem}[section]
\newtheorem{lemma}[thm]{Lemma}

\newtheorem{cor}[thm]{Corollary}
\newtheorem{prop}[thm]{Proposition}

\newtheorem{conjecture}[thm]{Conjecture}

\theoremstyle{remark}
\newtheorem{ex}[thm]{Example}

\newtheorem{rem}[thm]{Remark}

\theoremstyle{definition}
\newtheorem{defn}[thm]{Definition}

\newtheorem{remark}[thm]{Remark}

\newtheorem{Open questions}[thm]{Open questions}
\newtheorem{Open question}[thm]{Open question}
\newtheorem{Open problems}[thm]{Open problems}
\newtheorem{Open problem}[thm]{Open problem}

\newenvironment{enumi}{\begin{enumerate}[\small\upshape(i)]}{\end{enumerate}}
\newenvironment{enum1}{\begin{enumerate}[\small\upshape(1)]}{\end{enumerate}}

\newcommand{\tc}[2]{\textcolor{#1}{#2}}
\definecolor{dmagenta}{rgb}{.5,0,.5} 
\definecolor{dred}{rgb}{.5,0,0} 
\definecolor{dgreen}{rgb}{0,.5,0} 
\definecolor{blue}{rgb}{0,0,0.5} 
\definecolor{black}{rgb}{0,0,0} 
\definecolor{vdgreen}{rgb}{0,.3,0} 
\definecolor{vdred}{rgb}{.3,0,0} 
\definecolor{red}{rgb}{1,0,0} \newcommand{\red}[1]{\tc{red}{#1}}

\newcommand{\Z}{\mathbb{Z}}

\newcommand{\bG}{\mathbf{G}}

\def\wh{\widehat}

\def\emp{\nothing}

\def\zz{\mathbb Z}
\def\zzz{{{\zz_2}}}
\def\nn{\mathbb N}

\def\rr{\mathbb R}

\def\ff{{\rm \textbf{F}}}

\def\Ga{\Gamma}

\def\Si{\Sigma}
\def\ga{\gamma}

\def\ep{\epsilon}
\def\al{\alpha}
\def\be{\beta}
\def\om{\omega}

\def\ka{\vk}

\def\vk{\varkappa}

\def\cG{\mathcal G}

\def\ssu{\subset}

\def\wt{\widetilde}
\def\<{\langle}
\def\>{\rangle}

\def\SL{ {\text {\rm SL} } }

\DeclareMathOperator{\Aut}{Aut}
\DeclareMathOperator{\diam}{diam}

\def\Ups{\Upsilon}

\def\vt{\vartheta}

\def\rK{{\text{\rm {K}}}}

\def\ii{{\text{\textbf{\em i}}}}

\def\0{{\mathbf 0}}

\def\nothing{\varnothing}

\def\.{\hskip.06cm}
\def\ts{\hskip.03cm}

\def\length{{\text {\rm $\ell$}}}

\def\cayley{{\text {\rm Cayley} }}
\def\Sym{{\text {\rm Sym} }}

\def\nts{\hskip-.04cm}

\newcommand{\F}{\ff}

\newcommand{\la}{\langle}
\newcommand{\ra}{\rangle}
\DeclareMathOperator{\PSL}{PSL}

\def\sml{{\preccurlyeq}}
\def\grt{{\succcurlyeq}}

\newcommand{\toto}{{\ts\twoheadrightarrow\ts}}
\DeclareMathOperator{\bp}{{{{ \bigotimes \ts }}}}
\newcommand{\cff}{\vartheta}
\def\op{{{{ \ts \otimes \ts }}}}

\def\gaga{{\Ups}}
\def\Upsi{{\phi}}

\def\Laoi{{\Lambda^i_{\om}}}
\def\Laom{{\Lambda^M_{\om}}}

\def\lao{{\triangle}}
\def\laoi{{i}}

\def\Ds{{ {\mathcal{D}_- }}}
\def\Dg{{ {\mathcal{D}}_+ }}
\def\Deq{{ {\mathcal{D}_\circ }}}

\def\Dsi{{ {\mathcal{D}^i_- }}}
\def\Dgi{{ {\mathcal{D}^i_+ }}}
\def\Deqi{{ {\mathcal{D}^i_\circ }}}

\def\vs{\varrho}
\def\rT{{\mathbf{T}}}

\newcommand{\ccc}{c}
\newcommand{\ddd}{{\text{\rm D}}}

\newcommand{\UUU}[2]{{\text{\rm U}}_{#2}({#1})}
\newcommand{\LLL}[2]{{\text{\rm L}}_{#2}({#1})}
\newcommand{\NNN}[2]{{\text{\rm N}}_{#2}({#1})}

\title{Groups of Oscillating Intermediate Growth}


\author[Martin~Kassabov]{ \ Martin~Kassabov$^\star$}

\author[Igor~Pak]{ \ Igor~Pak$^\dagger$}

\thanks{\thinspace ${\hspace{-.45ex}}^\star$School of Mathematics, University of Southampton, 
Southampton, SO17 1BJ, UK, \ts and Department of Mathematics, Cornell University, Ithaca, NY 14853, USA; \ts
Email: \ts \texttt{martin.kassabov@gmail.com}}

\thanks{\thinspace ${\hspace{-.45ex}}^\dagger$Department of Mathematics,
UCLA, Los Angeles, CA 90095, USA; \ts Email: \ts \texttt{pak@math.ucla.edu}}

\date \today

\begin{document}

\begin{abstract}
We construct an uncountable family of finitely generated groups of
intermediate growth, with \emph{growth functions} of new type.
These functions can have large oscillations between lower and
upper bounds, both of which come from a wide class of functions.
In particular, we can have growth oscillating between~$e^{n^\al}$
and any prescribed function, growing as rapidly as desired.
Our construction is built on top of any of the Grigorchuk
groups of intermediate growth, and is a variation on the
limit of permutational wreath product.
\end{abstract}


\maketitle

\section{Introduction}

The growth of finitely generated groups is a beautiful subject
rapidly developing in the past few decades.  With a pioneer
invention of groups of \emph{intermediate growth} by
Grigorchuk~\cite{Gri2} about thirty years ago, it became
clear that there are groups whose growth is given by difficult
to analyze function.  Even now, despite multiple improvements,
much about their growth functions remains open
(see~\cite{Gri-one,Gri5}), with the sharp bounds constructed
only this year in groups specifically designed for that
purpose~\cite{BE}.

In the other direction, the problem of characterizing growth
functions of groups remains a major open problem, with only
partial results known.  Part of the problem is a relative
lack of constructions of intermediate growth groups, many
of which are natural subgroups of $\Aut(\rT_k)$, similar to
the original Grigorchuk groups in both the structure and analysis.
In this paper we propose a new type of groups of intermediate
growth, built by combining the action of Grigorchuk groups
on $\rT_k$ and its action on a product of copies of certain
finite groups $H_i$.  By carefully controlling groups~$H_i$,
and by utilizing delicate expansion results, we ensure
that the growth oscillates between two
given functions. Here the smaller function is controlled by
a Grigorchuk group, and the larger function can be
essentially \emph{any} sufficiently rapidly growing function
(up to some technical condition).  This is the first result
of this type, as even the simplest special cases could not
be attained until now (see the corollaries below).

\smallskip

Our main result (Main Theorem~\ref{t:main}) is somewhat technical
and is postponed until the next section.  Here in the introduction
we give a rough outline of the theorem, state several corollaries,
connections to other results, etc.  For more on history of the
subject, general background and further references see
Section~\ref{s:fin}.

\smallskip

For a group $\Ga$ with a generating set $S$, let $\ga_\Ga^S(n) = |B_{\Ga,S}(n)|$,
where $B_{\Ga,S}(n)$ is the set of elements in~$\Ga$ with word length~$\le n$.
Suppose $f_1,f_2,g_1,g_2: \nn \to \nn$
monotone increasing subexponential integer functions which satisfy
\begin{equation}
\label{eq:ast}
\tag{$\ast$}
f_1\. \grt \. f_2 \.\grt \. g_1 \. \grt \. g_2 \. = \. \ga^S_{\bG_\om}, \ \ \.
\text{where} \ \,\, \bG_\om=\<S\> \ \,
\text{is a \emph{Grigorchuk group}~\cite{Gri3}\ts.}
\end{equation}
Roughly, the \textbf{Main Theorem} states that under further technical
conditions strengthening~(\ref{eq:ast}), there exists a finitely
generated group $\Ga$ and a generating set $S$,
with growth function $h(n) = \ga_{\Ga}^S(n)$, such that
$g_2(n) \. < \. h(n) \. < \. f_1(n)$ and $h(n)$ takes values in the
intervals $[g_2(n),g_1(n)]$ and $[f_2(n),f_1(n)]$
infinitely often.
We illustrate the theorem in Figure~\ref{f:graph}.

\smallskip

\begin{figure}[hbt]
\begin{center}
\psfrag{a}{\small $f_1$}
\psfrag{b}{\small $f_2$}
\psfrag{c}{\small $g_1$}
\psfrag{d}{\small $g_2$}
\psfrag{h}{$h$}
\psfrag{n}{$n$}
\psfrag{1}{\hskip-.16cm $1$}
\psfrag{0}{\hskip-.16cm $.5$}
\epsfig{file=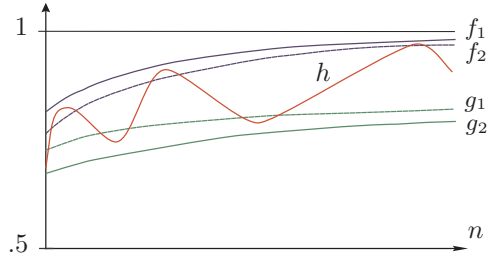,width=6.2cm}
\end{center}
\caption{The graph of $\log_n \log$ of functions $f_1,f_2,g_1,g_2$
   and $h$, as in the Main Theorem.}
\label{f:graph}
\end{figure}

Stated differently, the Main Theorem implies that one can construct groups
with specified growth which is oscillating within a certain range, between,
$\exp\bigl(n^{\al(n)}\bigr)$ where $\al(n)$ is bounded from below by a
constant, and a function which converges to {\small $(1-)$} as rapidly as desired.
Of course, it is conjectured that $\al(n)\ge 1/2$ for all groups of
intermediate growth~\cite{Gri-one} and~$n$ large enough
(cf.~Subsection~\ref{ss:fin-growth}).

\smallskip

To get some measure of the level of complexity of this result,
let us state a corollary of independent interest.  Here we take the
\emph{first Grigorchuk group}~$\bG$ (cf.~Subsection~\ref{ss:fin-gri}),
and omit both~$f_1$ and $g_2$, taking~$g_1$ to be slightly greater
that the best known upper bound for the growth of~$\bG$.

\begin{cor}[Oscillating Growth Theorem]
\label{cor:osc-growth}
For every  increasing function $\mu: \nn\to \rr_+$, $\mu(n) = o(n)$,
there exists a finitely generated group $\Ga$ and a
generating set $\<S\> = \Ga$, such that
\begin{align*}
\ga_{\Ga}^S(n) & \, < \, \exp \bigl(n^{4/5}\bigr) \quad \ \ \text{for infinitely many} \ \ n\in \nn\ts,
\ \ \text{and} \\
\ga_{\Ga}^S(n') & \, > \, \exp\bigl(\mu(n')\bigr) \quad \ \, \text{for infinitely many} \ \ n'\in \nn\ts.
\end{align*}
\end{cor}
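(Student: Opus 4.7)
The plan is to apply the forthcoming Main Theorem (Theorem~\ref{t:main}) by specializing the four monotone subexponential functions $f_1 \grt f_2 \grt g_1 \grt g_2$ as follows. Take $\bG_\om = \bG$ to be the first Grigorchuk group with its standard generating set $S$, so that $g_2 := \ga^S_\bG$ is the function forced by $(\ast)$. The best known upper bound on the growth of $\bG$ has the form $\ga^S_\bG(n) \le \exp(n^\eta)$ with $\eta \approx 0.7675$; set $g_1(n) := \exp(n^{0.78})$, which is subexponential, dominates $g_2$, and lies strictly below $\exp(n^{4/5})$. Any visit of $h$ to the lower band $[g_2(n), g_1(n)]$ then gives $h(n) \le g_1(n) < \exp(n^{4/5})$, which is the first inequality of the corollary.

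For the upper band, set $f_2(n) := \exp\bigl(\mu(n)\bigr)\cdot g_1(n)$, so that $f_2 \grt g_1$ and $f_2(n) > \exp(\mu(n))$; hence any visit of $h$ to $[f_2, f_1]$ gives the second inequality. Since $\mu(n) = o(n)$, we have $\log f_2(n) = \mu(n) + n^{0.78} = o(n)$, and we may pick any monotone $\nu: \nn \to \nn$ with $\nu(n) = o(n)$ and $\nu(n) \ge \mu(n) + 2 n^{0.78}$ (possible because $\mu = o(n)$) and set $f_1(n) := \exp(\nu(n))$. The chain $(\ast)$ then holds with monotone subexponential entries.

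The next step is to verify the additional technical hypotheses strengthening $(\ast)$ that accompany the Main Theorem. The gaps between successive bounds with our choices are substantial: $\log(g_1/g_2) \ge n^{0.78} - n^\eta$ and $\log(f_1/f_2) \ge n^{0.78}$, both of them nearly-exponential. Any quantitative separation or regularity condition imposed by the Main Theorem --- for instance, slow variation of $\log f_i,\log g_i$, or a prescribed minimum gap between consecutive bands --- can therefore be accommodated by a mild readjustment (replacing $0.78$ by a slightly larger exponent, or shifting $\nu$ upward) without affecting either of the two target inequalities. The Main Theorem then delivers a finitely generated group $\Ga = \<S\>$ whose growth function $h = \ga^S_\Ga$ visits $[g_2(n), g_1(n)]$ for infinitely many $n$ and visits $[f_2(n'), f_1(n')]$ for infinitely many $n'$, which is exactly what the corollary claims.

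The main obstacle is not the deduction itself but matching our specific choices to the technical conditions of the Main Theorem: because $\mu$ is allowed to be arbitrarily close to linear, the upper envelope $f_1$ must sit essentially at the edge of subexponential growth, so any fine regularity constraint on $f_1$ must be checked carefully. The corollary retains enough flexibility --- we may inflate $f_1$ further and shrink $f_2$ slightly while keeping $f_2(n) > \exp(\mu(n))$ --- to absorb any such constraint, so this is bookkeeping rather than a genuine obstruction.
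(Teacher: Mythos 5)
Your strategy — specialize the Main Theorem — is not how the paper obtains this corollary, and as written the specialization does not in fact satisfy the Main Theorem's hypotheses.

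The paper proves the Oscillating Growth Theorem \emph{before} the Main Theorem, not as a corollary of it. Section~\ref{s:proof} establishes Theorem~\ref{t:general}: for admissible $f,g$ with $g(n)/\ga_{\bG_\om}(n)\to\infty$ (and any two target subsequences), there is a group whose growth is infinitely often below $g$ and infinitely often above $f$. Corollary~\ref{cor:osc-growth} then follows by taking $g(n)=\exp(n^{4/5})$ and $f$ an admissible majorant of $\exp(\mu(n))$, with no need to touch conditions~(\ref{t:main:con:square}), (\ref{t:main:con:increasing}) or~(\ref{t:main:con:growth}). The point emphasized in the introduction is precisely that ``without functions $g_1$ and $f_2$ the result is technically easier to obtain''; Theorem~\ref{t:general} is then used as a stepping stone for the Main Theorem, so your route inverts the paper's logical architecture.

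Even granting the Main Theorem, your numerical choices break down. First, with $\log f_2(n)=\mu(n)+n^{0.78}$, condition~(\ref{t:main:con:square}) ($f_2^3=o(f_1)$) forces $\nu(n)\ge 3\mu(n)+3n^{0.78}+\omega(1)$, whereas you only require $\nu(n)\ge\mu(n)+2n^{0.78}$; for small $\mu$ this gives $\log f_1\sim 2n^{0.78}$ and $\log f_2\sim n^{0.78}$, and $f_2^3$ is of the same order as $f_1$, not $o(f_1)$. Second, and more seriously, condition~(\ref{t:main:con:growth}) is not ``bookkeeping'': Example~\ref{ex:main-growth1} shows that when $\log g_1\sim n^\al$ and $\log f_1\sim n^\be$ one needs $\be>1/(2-\al)$, i.e.\ $\be>0.82$ for $\al=0.78$. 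Your recipe (for slowly growing $\mu$) produces $\be=0.78$, so condition~(\ref{t:main:con:growth}) fails outright. This can be repaired by inflating $f_1$ further, but then one must check~(\ref{t:main:con:growth}) again for each regime of $\mu$; you never do this verification, and it is the entire content of the hypothesis. You also need $n/\log f_2(n)$ and $n/\log f_1(n)$ increasing (admissibility), which does not follow from $\mu$ merely being increasing — both routes need a smoothing step here, but your proposal doesn't mention it either.

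In short: the proposal takes a genuinely different and heavier path than the paper, its claimed choices violate condition~(\ref{t:main:con:square}), and the crucial quantitative hypothesis~(\ref{t:main:con:growth}) is asserted rather than checked, even though the paper's own Example~\ref{ex:main-growth1} shows it fails for the parameters your recipe produces.
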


\medskip

The corollary states that the growth of~$\Ga$ is both
\emph{intermediate} (i.e., super polynomial and subexponential),
and \emph{oscillating} between two growth functions which may
have different asymptotic behavior.  In fact, $\mu(n)$ can be as
close to linear function as desired, so for example one
can ensure that the ball sizes are $\le \exp(n^\al)$ for some~$n$,
and $\ge \exp(n/\log\log\log n)$ for other $n$,
both possibilities occurring infinite often.  Let us now
state a stronger version of the upper bound in the same setting.

\begin{cor}[Oscillating growth with an upper bound] \label{cor:osc-upper}
For $\mu(n) = n^{\al} \log^\be n$ with $5/6 < \al < 1$, or $\al =1$ and
$\be < 0$, there exist a finitely generated group $\Ga$ and a
generating set $\<S\> = \Ga$,  such that
$$
\ga_{\Ga}^S(n) \. < \. \exp(n^{4/5}),
\quad
\ga_{\Ga}^S(n') \. > \.  \exp(\mu(n'))
\quad \text{for infinitely many} \ \ n,~n'\in \nn\ts,
$$
$$
\mbox {and } \ \, \ga_{\Ga}^S(m) \, < \, \ts
\exp\bigl(\mu(\text{\rm \small 9}\ts m)\bigr)
\quad  \text{for all sufficiently large } \ m\in \nn\ts.
$$
\end{cor}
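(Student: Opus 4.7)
The plan is to derive the statement as a direct application of the Main Theorem with an appropriate quadruple $(f_1, f_2, g_1, g_2)$. Following the template already used for Corollary~\ref{cor:osc-growth}, I would take $\bG_\om = \bG$ to be the first Grigorchuk group, set $g_2 = \ga^S_{\bG}$, and choose $g_1(n) = \exp(n^{4/5})$, which sits just above the best known upper bound on the growth of $\bG$ and hence gives $g_1 \grt g_2$. The new ingredient compared with Corollary~\ref{cor:osc-growth} is that the top plateau must also be pinned down, for which I would set
\[
f_2(n) \. = \. \exp(\mu(n))\ts, \qquad f_1(n) \. = \. \exp(\mu(9n))\ts.
\]
With these four functions, the conclusions of the Main Theorem directly give $h(m) < f_1(m) = \exp(\mu(9m))$ for all large $m$, $h(n') \geq \exp(\mu(n'))$ infinitely often, and $h(n) \leq \exp(n^{4/5})$ infinitely often --- exactly the three assertions of the corollary.

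What remains is to verify that $(f_1, f_2, g_1, g_2)$ satisfies \eqref{eq:ast} and its technical strengthening. The chain $f_1 \grt f_2 \grt g_1 \grt g_2$ reduces to three inequalities. Monotonicity of $\mu$ yields $\mu(9n) \geq \mu(n)$ at once. The bound $\mu(n) \geq n^{4/5}$ for large $n$ is precisely where the hypothesis $\al > 5/6$ (or $\al = 1$, $\be < 0$) is used: the margin between $5/6$ and $4/5$ supplies the slack needed for the quantitative separation conditions behind the strengthening of \eqref{eq:ast}. Finally, $\exp(n^{4/5})$ dominates $\ga^S_{\bG}$ by any of the classical subexponential upper bounds on the growth of $\bG$, and $f_1(n) = \exp(\mu(9n))$ is still subexponential since $\mu(9n) = o(n)$ throughout the admissible range of $(\al, \be)$.

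The step I expect to be the main obstacle is verifying the quantitative form of the technical conditions with the specific constant $9$ baked into $f_1$. This $9$ reflects the bounded multiplicative inflation of generator lengths coming from the auxiliary finite factors $H_i$ in the permutational wreath product that underlies the Main Theorem, and one must track it carefully through the proof of that theorem. For $\mu(n) = n^\al \log^\be n$ all relevant ratios --- $\mu(Cn)/\mu(n) \sim C^\al$ and $\mu(n)/n^{4/5} \to \infty$ --- are well controlled, so once the bookkeeping of constants is done, the verification of the technical conditions is essentially routine and the corollary drops out of the Main Theorem.
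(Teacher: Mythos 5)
Your plan is essentially the correct (and indeed the only natural) way to derive the corollary from the Main Theorem, and the paper does not spell out a proof for it, so this is the route the authors intended: take $\bG_\om=\bG$, $g_2=\ga_\bG$, $g_1(n)=\exp(n^{4/5})$, $f_2(n)=\exp(\mu(n))$, $f_1(n)=\exp(\mu(9n))$, and read off the three conclusions of Theorem~\ref{t:main}. The verification works; in particular condition~(\ref{t:main:con:square}) reduces to $\mu(9n)-3\mu(n)\to\infty$, which is $(9^\al-3)\mu(n)\to\infty$ (resp.\ $6\mu(n)\to\infty$ when $\al=1$), so any constant $C$ with $C^\al>3$ in place of~$9$ would do; the $9$ has nothing to do with ``inflation of generator lengths'' inside the construction --- it never enters the group at all, only the choice of test function $f_1$.

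One small inaccuracy to flag: you say the hypothesis $\al>5/6$ is used to ensure $\mu(n)\ge n^{4/5}$. That inequality only needs $\al>4/5$. The sharper threshold $\al>5/6$ is forced by condition~(\ref{t:main:con:growth}) (equivalently condition~(\ref{l:postponed:con:technical}) of Main Lemma~\ref{l:postponed}). Concretely, for $\al<1$ one has $f_1^*(z),f_2^*(z)\asymp z^{1/(1-\al)}$ up to logarithms, and with $\log g_1(n)=n^{4/5}$ the left side of~(\ref{t:main:con:growth}) is $\exp\bigl(c\ts n^{-6/5+\frac{1}{5(1-\al)}}\bigr)$ while the right side is a fixed power of~$n$; the exponent must be positive, i.e.\ $\frac{1}{1-\al}>6$, i.e.\ $\al>5/6$. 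That computation, rather than the crude chain $f_1\grt f_2\grt g_1$, is where the $5/6$ actually comes from; everything else in your sketch is sound.
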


In other words, $\ga_\Ga(n_i)$ has the same asymptotics as $\ts e^{\mu(n_i)}$,
for a certain infinite subsequence~$\{n_i\}$.  Note a mild
restriction on~$\al$, which is a byproduct of our technique
(see other examples in the next section).

\smallskip

Now, the Main Theorem and the corollaries are related to
several other results.  On the one hand, the growth
of balls in the first Grigorchuk group~$\bG$
is bounded from above and below by
$$
\exp (n^\be) < \ga_{\bG}^S(n) < \exp (n^\al) \ \,
\text{for all generating sets~$S$, }
$$
integers~$n$ large enough, and where $\al=0.7675$ and $\be = 0.5207$.
Since this $\al$ is the smallest available upper bound for any known
group of intermediate growth (see Subsection~\ref{ss:fin-gri}), this
explain the lower bound in the corollaries
(in fact, the power $4/5$ there can be lowered to any $\al'> \al$).

For the upper bound, a result by Erschler~\cite{Ers2}
states that there is a group of intermediate growth,
such that $\ga_{\Ga}^S(n) > f(n)$ for all $n$ large
enough.  This result does not specify exactly the
asymptotic behavior of the growth function $\ga_{\Ga}^S(n)$,
and is the opposite extreme when compared to the Oscillating
Growth Theorem, as here \emph{both} the upper and lower bounds
can be as close to the exponential function as desired.
This also underscores the major difference with our main
theorem, as in this paper we emphasize the upper bounds
on the growth, which can be essentially any subexponential
function.

Combined with this Erschler's result, the Main Theorem
states that one can get an oscillating growth phenomenon
as close to the exponential function as desired.  For example,
Erschler showed in~\cite{Ers1}, that a certain group
Grigorchuk group~$\bG_\om$ has growth between
$g_2=\exp(n/\log^{2+\ep} n)$ and $g_1= \exp(n^{1-\ep}/\log n)$,
for any $\ep>0$.  The following result is a 
special case of the Main Theorem applied to this group~$\bG_\om$.

\begin{cor}\label{cor:osc-ers}
Fix $\ep >0$. Define four functions: \,
$g_2(n) = e^{n/\log^{2+\ep} n}$\., \ $g_1(n) = e^{n/\log^{1-\ep} n}\.,$
$$
f_2(n) = e^{n/\log \log n}\.,
\. \ \ \text{and} \ \,\. f_1(n) =
e^{n\sqrt{\log\log\log n}/\log\log n}\..
$$
Then there exists a finitely generated group $\Ga$ and a
generating set $\<S\> = \Ga$, with growth function
$h(n) = \ga_{\Ga}^S(n)$, such that:
$$
g_2(n) < h(n) < f_1(n)\ts, \ \
\text{for all $n$ large enough, and}
$$
$$
h(m) < g_1(m)\ts,  \ \. h(m') > f_2(m') \ts, \ \
\text{for infinitely many $m, m' \in \nn$\ts.}
$$
\end{cor}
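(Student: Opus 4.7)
The plan is to obtain Corollary~\ref{cor:osc-ers} as a direct application of the Main Theorem~\ref{t:main} to the specific Grigorchuk group $\bG_\om$ constructed by Erschler in~\cite{Ers1}, whose growth function is known to satisfy
$$
e^{n/\log^{2+\ep}n} \,<\, \gamma^S_{\bG_\om}(n) \,<\, e^{n/\log^{1-\ep}n}
$$
for all sufficiently large~$n$. I would apply the Main Theorem to this $\bG_\om$ with the lower pair taken to be $\widetilde g_2:=\gamma^S_{\bG_\om}$ and $\widetilde g_1(n):=e^{n/\log^{1-\ep}n}$, and the upper pair $\widetilde f_2(n):=e^{n/\log\log n}$, $\widetilde f_1(n):=e^{n\sqrt{\log\log\log n}/\log\log n}$ as in the statement. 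The ``$g_2$'' appearing in the statement of the corollary is just a lower bound on $\widetilde g_2$, which is harmless for the conclusion.

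First I would verify the chain $\widetilde f_1 \grt \widetilde f_2 \grt \widetilde g_1 \grt \widetilde g_2$. The outer inequalities reduce to the elementary estimates $\sqrt{\log\log\log n} > 1$ and $\log^{1-\ep} n < \log^{2+\ep} n$, while the middle inequality $\widetilde g_1 < \widetilde f_2$ amounts to $\log\log n < \log^{1-\ep} n$, all valid for $n$ large. Next I would verify the technical strengthening of~$(\ast)$ demanded by the Main Theorem, which requires enough separation in the $\log_n\log$ scale between consecutive functions so that a suitable sequence of finite groups $H_i$, with carefully calibrated expansion constants and diameters, can be interleaved in the wreath product construction. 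Each of the ratios $\widetilde g_1/\widetilde g_2,\ \widetilde f_2/\widetilde g_1,\ \widetilde f_1/\widetilde f_2$ tends to infinity, which should provide sufficient room.

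With the hypotheses of the Main Theorem in place, it yields a finitely generated group $\Ga = \<S\>$ with growth $h(n)$ satisfying $\widetilde g_2(n)<h(n)<\widetilde f_1(n)$ for all large~$n$, and taking values in $[\widetilde g_2(n),\widetilde g_1(n)]$ and in $[\widetilde f_2(n),\widetilde f_1(n)]$ each infinitely often. The first infinite-subsequence statement supplies $h(m)\le \widetilde g_1(m)=e^{m/\log^{1-\ep} m}=g_1(m)$, the second supplies $h(m')\ge \widetilde f_2(m')=f_2(m')$, and the global lower bound $h(n)>\widetilde g_2(n)\geq e^{n/\log^{2+\ep}n}=g_2(n)$ is provided by Erschler's estimate. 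This gives all three parts of the corollary.

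The main obstacle, I expect, is verifying the quantitative strengthening of~$(\ast)$. Because $\Ga$ is built as a variant of the limit of permutational wreath products of $\bG_\om$ with the $H_i$, the oscillation pattern of $h(n)$ is governed by how the sizes and expansion properties of the $H_i$ alternately pull $h$ up toward $\widetilde f_1$ and let it sink back toward $\widetilde g_2$; these two regimes must be scheduled so that the upper envelope is touched infinitely often and the lower envelope is also touched infinitely often, while $h$ never exceeds $\widetilde f_1$. For the functions above, the logarithmic separations between consecutive functions are large enough that this scheduling should be feasible with the expansion-based machinery advertised in the introduction.
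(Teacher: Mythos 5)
Your overall approach — apply the Main Theorem~\ref{t:main} to Erschler's group $\bG_\om$ from~\cite{Ers1} — is exactly what the paper has in mind. You also correctly check the easy conditions~(\ref{t:main:con:square}), (\ref{t:main:con:restriction}) and~(\ref{t:main:con:lower}). The gap is in condition~(\ref{t:main:con:growth}): you never verify it, and the reasoning ``each of the ratios tends to infinity, which should provide sufficient room'' is not a verification. Condition~(\ref{t:main:con:growth}) is a quantitative inequality involving the inverse functions $f_1^*$ and $f_2^*$, and the paper's own Examples in Subsection~\ref{ss:main-ex} already show it carves out delicate regions of parameter space where simple ``gap goes to infinity'' heuristics give the wrong answer.

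Concretely, carry out the computation with the functions you propose. One finds $f_2^*(z)=e^{e^z}$ and $f_1^*(z)\asymp e^{e^{z\sqrt{\log z}}}$, while $\log g_1(n) = n/\log^{1-\ep}n$ gives $\tfrac{n}{C\log g_1(n)}=\tfrac{1}{C}\log^{1-\ep}n$. Feeding this into $f_1^*$ yields
$f_1^*\bigl(\tfrac{1}{C}\log^{1-\ep}n\bigr)\asymp
\exp\exp\bigl(\tfrac{1}{C}\log^{1-\ep}n\cdot\sqrt{(1-\ep)\log\log n}\bigr)$.
Taking logarithms twice in condition~(\ref{t:main:con:growth}) reduces it to an inequality of the form $\text{const}\cdot\log^{1-\ep}n\,\sqrt{\log\log n} > \log n + O(\log\log n)$, which \emph{fails} for large $n$ since $\sqrt{\log\log n}=o(\log^\ep n)$ for every $\ep>0$. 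So with $g_1(n)=e^{n/\log^{1-\ep}n}$ as written in the corollary, the Main Theorem simply does not apply; some further argument is needed, and your proposal does not supply it.

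The fix is to run the Main Theorem with a \emph{smaller} $g_1$, namely the one Erschler actually proves, $\tilde g_1(n)=e^{n^{1-\ep'}/\log n}$ for a suitable $\ep'<\ep$ (this is the $g_1$ the paper quotes in the sentence just before the corollary). Then $\tfrac{n}{C\log\tilde g_1(n)}\asymp n^{\ep'}\log n$, so the argument fed to $f_1^*$ is a positive power of $n$; the double exponential beats $e^{e^{Cn}}$ after two logarithms because $n^{\ep'}\log^{3/2}n\gg\log n$, and condition~(\ref{t:main:con:growth}) is verified. Since $\tilde g_1\sml g_1$ for large $n$, conclusion~(3) of the Main Theorem with $\tilde g_1$ is strictly stronger than the corollary's statement $h(m)<g_1(m)$. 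You should also check condition~(\ref{t:main:con:increasing}) ($f_1/\tilde g_1$ increasing), which you skip entirely, and the admissibility of $f_1$; neither is automatic from your outer/middle inequality checks, though both do hold here.
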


Of course, here the functions~$f_1, f_2$ are chosen somewhat
arbitrarily, to illustrate the power of our Main Theorem.

\smallskip

Let us say now a few words about the \emph{oscillation phenomenon}.
In a recent paper~\cite{Bri2}, Brieussel shows that there
is a group~$\Ga$ of intermediate growth, such that
$$
\liminf \. \frac{\log \log \ga_{\Ga}(n)}{\log n} \. = \al_-
\quad
\mbox{and}
\quad
\limsup \. \frac{\log \log \ga_{\Ga}(n)}{\log n} \. = \al_+,
$$
for any fixed $\al=0.7675\leqslant \al_- \leqslant \al_+ \leqslant 1$.
This result is somewhat weaker than our Main Theorem when it comes to
the range of asymptotics of upper limits, but is stronger in a sense
that the lower limits can be prescribed in advance, and $\al_+-\al_-$
can be as small as desired (cf.~Example~\ref{ex:main-growth1}).
Since Brieussel uses groups different from~$\bG_\om$,
the Main Theorem cannot use them as an input.  We postpone
further discussion of this until Section~\ref{s:gen} (see also
Subsection~\ref{ss:fin-comparison}).

\smallskip

A starting point for the construction in the proof of the Main Theorem
(Theorem~\ref{t:main}), is a sequence of finite groups~$G_i$ with
generating sets~$S_i$, such that the growth of small balls in
$X_i = \cayley(G_i,S_i)$ is roughly~$\ga_{\bG_\om}$, but the
diameter of Cayley graphs~$X_i$ is close to logarithmic, i.e.,
the growth of large balls is almost exponential.
These groups and generating sets can be combined into an
infinite group~$\Gamma$ and generating set~$S$, such that for
certain~$n$, balls $\ga_{\Ga}^S(n)$ behave as small balls
in~$X_i$, while for other values of $n$, these balls behave
as large balls in~$X_i$, and thus have almost exponential size.
This behavior implies that size of these balls oscillates as in
the theorem.

\smallskip

The rest of the paper is structured as follows.  We begin with
the statement of the main theorem (Section~\ref{s:main}), where
also give examples and a very brief outline of the proof idea.
We continue  with basic definitions and notations in
Section~\ref{s:def}.  We then
explore graph and group limits in Section~\ref{sec_limits}.
We explore the Grigorchuk groups $\bG_\om$ in Section~\ref{s:gri},
giving some preliminary technical results, which we continue in
Section~\ref{s:technical}. We then prove the
Oscillating Growth Theorem (Corollary~\ref{cor:osc-growth})
in Section~\ref{s:proof}, as without functions $g_1$ and~$f_2$
the result is technically easier to obtain.  We then prove the
Main Theorem~\ref{t:main} in Section~\ref{s:control}.
A key technical result (Main Lemma~\ref{l:postponed}) is postponed
until Section~\ref{s:lemma-postponed}, while further
generalizations are presented in Section~\ref{s:gen}.
We conclude with final remarks and open problems in
Section~\ref{s:fin}.

\smallskip

\section{The Main theorem}
\label{s:main}

\subsection{The statement}  \label{ss:main-statement}
We begin with two technical definitions.

\begin{defn}
A function $f: \nn \to \rr$ is called \emph{admissible} if $f(n)$ is increasing,
subexponential, and the ratio
$$
\frac{n}{\log f(n)} \quad \text{is increasing.}
$$
\end{defn}


\begin{defn}
If $f$ is an admissible function, define a function $f^*: \rr \to \rr$ as the
following inverse function:
$$
f^*(z) \ts = \ts \Phi^{-1}(z),  \quad \mbox{where} \ \
\Phi(x) \. = \.  \frac{x}{\log f(x)}\ts.
$$
\end{defn}


\smallskip

\begin{thm} [Main theorem] \label{t:main}
Let $f_1,f_2, g_1 , g_2: \nn \to \nn$ be functions
which satisfy the following conditions:

\begin{enumi}
\item \label{t:main:con:admissible}
$f_1$ and $f_2$ are admissible,

\item \label{t:main:con:square}
$f_2(n)^3 = o\bigl(f_1(n)\bigr)$,

\item \label{t:main:con:increasing}
$f_1/g_1$ is increasing,

\item \label{t:main:con:restriction}
$g_2(n) \le \ga_{\bG_\om}(n)$, where $\bG_\om$ is
a Grigorchuk group of intermediate growth,

\item \label{t:main:con:lower}
$\ga_{\bG_\om}(n) = o\bigl(g_1(n)\bigr)$,

\item \label{t:main:con:growth}
$\displaystyle
\exp\left[
\frac{\log g_1(n)}{Cn^2} \,
f_1^*\left(\frac{n}{C\log g_1(n)}\right)
\right] \.
> \.
\frac{C\ts f_2^*(Cn)}{n^2}$, \ts
for all $C>0$ and $n=n(C)$ sufficiently large.
\end{enumi}

\smallskip

\noindent
Then there exists a finitely generated group $\Ga$ and a generating set
$\<S\> = \Ga$,
with growth function $h(n) = \ga_{\Ga}^S(n)$, such that:
\begin{enum1}
\item $h(n) < f_1(n)$ for all $n\in \nn$ large enough,
\item $h(n) > f_2(n)$ for infinitely many $n \in \nn$,
\item $h(n) < g_1(n)$ for infinitely many $n \in \nn$,
\item $h(n) \geq  g_2(n)$ for all $n \in \nn$\ts.
\end{enum1}
\end{thm}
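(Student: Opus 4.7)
The plan is to build $\Ga$ as (a variation on) a limit of permutational wreath products of the form $\bG_\om \ltimes \prod_v H_{i(v)}$, where finite groups $H_i$ are chosen to be highly expanding and positioned at carefully selected levels of the rooted tree $\rT_k$ on which $\bG_\om$ acts. Concretely, fix the Grigorchuk group $\bG_\om=\<S\>$ from hypothesis~(\ref{t:main:con:restriction}) and an increasing sequence of integers $\{m_i\}$ (to be chosen). For each~$i$, attach a copy of a finite expander group $H_i$ along a branch of $\rT_k$ at depth $m_i$, and take $\Ga$ to be generated by $S$ together with natural generators of each $H_i$. The main freedom is in the two sequences $\{m_i\}$ and $\{|H_i|\}$; the delicate condition~(\ref{t:main:con:growth}) should be precisely the compatibility constraint that lets both sequences be tuned simultaneously to meet all four requirements on $h$.

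First I would reduce the oscillating behaviour of $h=\ga_\Ga^S$ to quantitative estimates on finite Cayley graphs $X_i=\cayley(G_i,S_i)$, where $G_i$ is the $i$-th finite truncation of $\Ga$ (the action on level $m_i$ of $\rT_k$ together with the $H_j$, $j\leq i$). Two competing effects then produce the oscillation. For word length $n$ below an activation radius $r_i$ of $H_i$, the elements of $H_i$ require much longer words than $\bG_\om$-elements, so the contribution of $H_i$ to $h(n)$ is negligible and $h(n)$ is dominated by $\ga_{\bG_\om}^S(n)$; this will yield the upper bounds~(1) and~(3). For $n$ at or slightly beyond $r_i$, the expansion property of $H_i$ forces $h(n)$ to jump by a factor close to $|H_i|$, delivering the lower bound~(2). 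The universal lower bound~(4) is automatic from the surjection $\Ga \twoheadrightarrow \bG_\om$ together with hypothesis~(\ref{t:main:con:restriction}).

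Second, the quantitative choice is driven by condition~(\ref{t:main:con:growth}). Roughly, $m_i$ is picked so that $\ga_{\bG_\om}(m_i)$ lies just below the envelope $g_1$, while $|H_i|$ is picked so that the jump in ball size at $r_i$ matches $f_2(r_i)$ at some $r_i \gg m_i$; hypothesis~(\ref{t:main:con:lower}) guarantees the required gap between the Grigorchuk regime and $g_1$. The admissibility of $f_1,f_2$~(\ref{t:main:con:admissible}) and the cubic gap $f_2^3=o(f_1)$~(\ref{t:main:con:square}) provide the slack to accommodate the sub-multiplicative length behaviour of the wreath decomposition: a single $H_i$-factor of size $\sim f_2$ costs word length $\sim f_2^*$, which after the Grigorchuk-style length-reduction $w\mapsto (w_0,w_1,w_2)$ (multiplying length by a factor bounded away from $1$) still fits within the $f_1^*$ budget. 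Monotonicity of $f_1/g_1$~(\ref{t:main:con:increasing}) ensures that the activation radii $r_i$ form a strictly increasing sequence, so the oscillation occurs genuinely infinitely often.

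Third, the main obstacle is the upper bound~(1): showing that \emph{between} activation scales $r_i$ and $r_{i+1}$, the previously inserted groups $H_j$ with $j\leq i$ do not accidentally push $h(n)$ above $f_1(n)$. This requires a careful word-length estimate in the wreath product using Grigorchuk-style length reduction, together with a bookkeeping bound on the number of $H_j$-insertions along any reduced word of length $n$. I expect condition~(\ref{t:main:con:growth}) to be used in its full force at precisely this step, as the recurrence that closes the inductive estimate. I would isolate this as a Main Lemma (the author's Lemma~\ref{l:postponed}) and proceed in two passes: first prove the Oscillating Growth Theorem (Corollary~\ref{cor:osc-growth}) where $g_1$ and $f_2$ are absent and the bookkeeping simplifies, then install the two-sided intermediate bounds $g_1$ and $f_2$ by the more delicate argument sketched above. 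The corollaries in the introduction should then follow by plugging the specified envelopes into the theorem and checking~(\ref{t:main:con:growth}) by a direct calculation with the relevant inverse functions $f_1^*, f_2^*$.
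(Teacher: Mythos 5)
Your high-level picture is right: a decorated-tree/wreath-like construction over $\bG_\omega$ with expander groups $H_i$ at levels $m_i$, oscillation produced by the competition between Grigorchuk-type growth at scales where $H_i$ has not yet ``activated'' and near-exponential growth once it has, and a two-pass proof (Oscillating Growth Theorem first, then the full Main Theorem via a Main Lemma). This matches the paper's Section~\ref{ss:main-sketch} sketch.

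However, there is a genuine gap in how you plan to close the Main Lemma, and it is precisely the gap the paper is built to avoid. You propose choosing $|H_i|$ constructively ``so that the jump in ball size at $r_i$ matches $f_2(r_i)$'' and then verifying the upper bound~(1) by a length-bookkeeping argument, with condition~(\ref{t:main:con:growth}) closing an inductive recurrence. This does not work, because the available ball estimates in the intermediate regime $\cff(m_i) < n < T_i$ are crude: the upper bound comes only from submultiplicativity of $\ga_{\Lambda^{m_i}_\om(H_i)}$, while the lower bound comes only from expansion in $H_i$ (Lemma~\ref{l:upper_bound_for_Lambda}, Corollaries~\ref{cor:upper_bound_for_H}--\ref{cor:lower_bound_for_H}). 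The gap between these two crude estimates is wider than the gap between $f_1$ and $f_2$ unless $f_1/f_2$ is enormous, so a direct, constructive choice of $|H_i|$ cannot be certified to land in the window $(f_2, f_1)$. The paper says this explicitly at the start of Section~\ref{s:lemma-postponed} (``if $f_1 \sim f_2$, the gap\ldots{} is not sufficiently big for the above argument to work'') and resolves it \emph{non-constructively}: it partitions finite marked groups into three classes $\Dsi$, $\Dgi$, $\Deqi$ according to whether $\ga_{\Lambda^{m_i}_\om(H)}$ is below $f_2$, above $f_1^{2/3}$, or strictly in between, shows that $\Dsi$ is closed under $\bp$ when $\Deqi=\varnothing$ (Lemma~\ref{l:products_are_small}, Corollary~\ref{cor:closed_under_prod}), and then derives a contradiction by assembling a large product $\PSL_2(\zz_N)$ of primes in $\Dsi$ whose size violates Corollary~\ref{cor:lower_bound_for_H}. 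Condition~(\ref{t:main:con:growth}) enters exactly here, to ensure $\NNN{f_1}{i}/\LLL{f_2}{i}\to\infty$ so the contradiction goes through. Without this pigeonhole-by-contradiction step your argument cannot be completed; the ``careful word-length estimate'' you invoke is precisely the thing the authors could not make tight enough.

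A secondary but substantive point: the proof in the paper does not work directly with a single group acting on the decorated tree with ``extra generators for each $H_i$'' (which, as stated, would make $\Ga$ infinitely generated). It works with the diagonal product of marked groups, $\Ga = \bp_i F^{m_i}_\om(H_i)$, a $4$-generated subgroup of $\prod_i F^{m_i}_\om(H_i)$. This formalism is not cosmetic: it is what makes the ball-size comparison lemmas (Lemmas~\ref{l:balls_in_products}, \ref{l:product}, \ref{upper_bound_for_ball}) available, it is why $\Dsi$ being closed under $\bp$ (via the functoriality of $F_x$, Proposition~\ref{pro:F_preserves_products}) is even a sensible statement, and it is why the auxiliary group $\Laoi(H) = F^i_\om(H) \op \bG_\om$ is the right object to estimate. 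If you keep the wreath-product picture you will need to reconstruct equivalents of all these lemmas from scratch, and you will also need to be explicit about why the resulting group is finitely generated. I would recommend reformulating your construction in the marked-group-product language before attempting the Main Lemma.
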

\smallskip

Although the conditions are technical, they are mostly mild
in a sense that many natural functions satisfy them.  For example,
condition~(\ref{t:main:con:square}) may seem strong, but notice
that our functions are greater than $\exp(n^\al)$, in which case
$(f_2)^3 \sml f_2$.   Similarly, condition~(\ref{t:main:con:restriction})
may seem restrictive, but in fact, due to Erschler's theorem~\cite{Ers2},
the growth of such Grigorchuk groups can be as large as desired,
even if we do not know anything else about these growth functions,
and with the currently available tools cannot yet control their growth.

\medskip

\subsection{Examples}\label{ss:main-ex}
The condition~(\ref{t:main:con:growth}) implies that the growth of $f_1$
is somewhat faster than that of~$g_1$.  However if the growth
if $g_1$ is close to exponential this condition also implies
that $f_1$ is significantly larger then $f_1$, since $f_1^*(\cdot)$
is small in that case.  To clarify this condition, we list some
examples below.

\begin{ex}\label{ex:main-growth1}
Let $\log g_1(n) \sim n^\al$ and $\log f_1(n) \sim n^\be$, for some
$0< \al < \be \le 1$.  The condition~(\ref{t:main:con:growth}) in this case
says that $\be > 1/(2-\al) > \al$.  This implies that the interval
$(\al,\be)$ cannot be arbitrary and thus, in particular, the
Main theorem cannot imply Brieussel's theorem~\cite{Bri2} (see below).
\end{ex}

\begin{ex}
Let $g_1(n)  \sim n/\log^\al n$, $\log f_2(n) \sim n/\log^\nu n$,
and $\log f_1(n) \sim n/\log^\be n$, where \ts $0< \be \le \nu \le \al$.
The condition~(\ref{t:main:con:growth}) in this case says that $\nu > 1/(\al/\be-2)$.
For example, $\al=5$, $\nu = 3$ and $\be=2$ works, but in
order to have $\nu =\be=2$, one needs $\al>5$.
\end{ex}

\begin{ex}
Let $g_1(n)  \sim n/\log^\al n$ be as before, but now
$\log f_2(n) \sim n/(\log\log n)^\nu$,
and $\log f_1(n) \sim n/(\log\log n)^\nu$, where \ts $0< \be \le \nu$
and $\al>0$.
The condition~(\ref{t:main:con:growth}) in this case says that $\nu > 1/(\al/\be-1)$.
For example, $\al=4$, $\nu = \be=2$ works fine.  More generally,
any $\nu = \be >\al$ satisfy the condition, as
well as $0< \nu = \be < \al-1$.
\end{ex}

\medskip

\subsection{A sketch of the group construction and the proof}
\label{ss:main-sketch}
The group $\Ga$ is constructed from on a sequence of integers $\{m_i\}$ and a
sequences of finite groups $\{H_i \subset \Sym(k_i)\}$ generated by $4$ involutions.
The group $\Ga$ acts on a \emph{decorated binary tree}~$\wh \rT_2$ obtained from
the (usual) infinite binary tree~$\rT_2$ as follows.\footnote{The
description of group~$\Ga$ is written in the language of~\cite{GP},
which is different from the rest of this paper.}  To each vertex on
level~$m_i$ we attach $k_i$ leaves, which are permuted by the group~$H_i$.
The group $\Ga$, like the Grigorchuk group $\bG_\om$, is generated
by~$4$ involutions, whose faithful action is recursively defined.
The definition is similar to the usual one; however, once we reach
a vertex with leaves, one need to specify the action on these leaves,
which is given by a generator of the group $H_i$.  We illustrate the
action of~$\Ga$ on~$\wh \rT_2$ in Figure~\ref{f:decor}, where the set of
leaves of size $H_i$ is decorating all vertices on $m_i$-th level.

\smallskip

\begin{figure}[hbt]
\begin{center}
\psfrag{T}{\small $\rT_2$}
\psfrag{TT}{\small $\wh \rT_2$}
\psfrag{z}{\small $z$}
\psfrag{zz}{\small $z'$}
\psfrag{H}{\hskip-.06cm\small $H_i$}
\epsfig{file=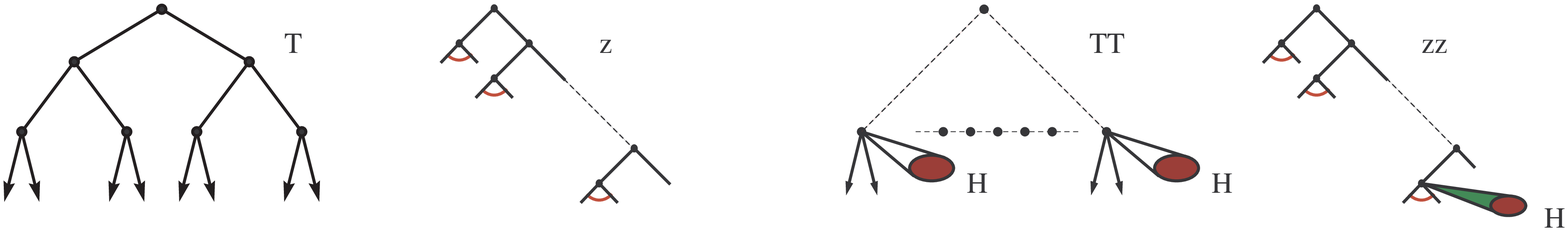,width=14.3cm}
\end{center}
\caption{\small Binary tree~$\rT_2$, generator $z\in \{a,b,c,d\}$ of~$\bG_\om$
acting by transposing selected branches in~$\rT_2$, the decorated binary
tree~$\wh \rT_2$ and generator $z'$ of~$\Ga$ acting on $\wh \rT_2$.}
\label{f:decor}
\end{figure}

Now, the reasoning behind the proof of the Main Theorem is the following.
Roughly speaking, the balls of radius $n$ does not see the group $H_i$
for $m_i >  \log n$. Therefore if the levels~$m_i$ grows sufficiently fast,
the growth of $\Ga$ is similar to the growth of $\bG_\om$ in the last
final interval before the $i$-th level is reached.  However, once we
reach an element in~$H_i$, the growth of $\Ga$ is determined by the
growth of~$H_i$, and can be much more rapid in this period.  If the
groups $H_i$ have logarithmic diameter and their sizes increase sufficiently
fast, we can ensure that the growth of $\Ga$ is as close to exponential
function as desired.

\smallskip

\section{Basic definitions and notations}
\label{s:def}

\subsection{Growth of groups}
Let $f, g: \nn \to \nn$ be two integer functions, such that
$f(n)$, $g(n) \to \infty$ as $n\to \infty$.  We write
\begin{align*}
f \ts \ll \ts g  \quad &  \text{if }  f(n) <  g(c\ts n)\ts, &
\ \ \text{for some} \ \, c >0 &\ \ \text{and infinitely many}
\ \ n\in \nn\ts,
\\
f \ts \sml \ts g \quad & \text{if }   f(n)  <  g(c\ts n)\ts, &
\ \ \text{for some} \ \,  c >0 &\ \ \text{and all sufficiently large} \ \ n\in \nn\ts.
\end{align*}
For example, $n^{100} \. \sml \, 3^n \. \sml \, 2^n$ and
$2^n \ll n^{n\ts (n\nts\nts\nts\mod\nts 2)} \ll n^2$.
Note here that ``$\ll$'' is not transitive since $2^n$ is
\emph{not} \ts $\ll n^2$.  We write $f \sim g$, if \ts
$f \. \sml\. g$ \ts and \ts $g \. \sml\. f$.

Let $\Ga$ be a finitely generated group and $S = S^{-1}$ a
symmetric generating set, $\Ga = \<S\>$.  Denote by
$B_{\Ga,S}(n)$  the set of elements $g\in \Ga$ such that
$\ts \length_{S}(g) \le n$, where $\ts \length_S$ is the
\emph{word length}, and let $\ga_{\Ga}^S(n) = |B_{\Ga,S}(n)|$.
Since for every other symmetric generating set $\<S'\> = \Ga$, we have
$C_1 \length_S(g) \le \length_{S'}(g) \le C_2 \length_S (g)$,
which implies that \ts
$\ga_{\Ga}^S(n) \ts\sml \ts \ga_{\Ga}^{S'}(n) \ts \sml\ts \ga_{\Ga}^S(n)$.
In other words, the asymptotics of $\ga^S_{\Ga}(n)$ are independent
of the generating set $S$, so whenever possible we will write
$\ga_\Ga(n)$ for simplicity.

Group $\Ga$ has \emph{exponential growth} if
$\ga_{\Ga}(n) \ts \grt \ts\exp(n)$, and \emph{polynomial growth} if
$\ga_{\Ga}(n) \ts \sml \ts n^c$ for some $c>0$.  Similarly, $\Ga$ has
\emph{intermediate growth} if $\ga_{\Ga}(n) \grt n^c$ for all $c>0$, and
$\ga_{\Ga}(n) \sml \exp f(n)$ for some $f(n)/n \to 0$,
as $n\to \infty$.

{\small

\begin{rem}
Using this notation, the Main Theorem says that for any functions
$$
f_1\, \grt \, f_2 \, \grt \, g_1 \, \grt \, g_2 \. = \. \ga^S_{\bG_\om}\ts,
$$
which satisfy additional technical assumptions, and
where $\bG_\om$ is a \emph{Grigorchuk group} of intermediate growth,
there exists a group~$\Ga$, whose growth function satisfies
$$
f_1 \,\. \grt \,\. \ga_\Ga \.\, \grt \.\, g_2\ts,
\quad
\ga_\Ga \ts \gg \ts f_2\ts, \quad \ \text{and}
\quad
\ga_\Ga \ts \ll \ts g_1\ts.
$$
In the special case when $f_1 \sim f_2$ and $g_1 \sim g_2$, this means that
$$
f_1 \,\. \grt \,\. \ga_\Ga \,\. \grt \,\. g_2 \,
\quad \mbox{and} \quad
f_1 \ts \ll \ts \ga_\Ga \ts \ll \ts g_2\ts.
$$
\end{rem}
}

\smallskip

For a group of intermediate growth, define
$$
\al(\Ga) \. = \. \lim_{n\to \infty} \frac{\log \log \ga_{\Ga}(n)}{\log n}
\ \ \text{if this limit exists},
$$
$$
\al_+(\Ga) \. = \. \limsup_{n\to \infty} \. \frac{\log \log \ga_{\Ga}(n)}{\log n}
\quad
\mbox{and}
\quad
\al_-(\Ga) \. = \. \liminf_{n\to \infty} \. \frac{\log \log \ga_{\Ga}(n)}{\log n}\..
$$

Our Main theorem (and the result in~\cite{Bri2}), shows that $\al(\Ga)$ is
does not necessarily exist, the first construction of this kind.
The result of Brieussel mentioned in
the introduction, implies that there exist a group~$\Ga$ of intermediate
growth with $\al(\Ga) =\nu$, for any given $\al \le \nu \le 1$.

\smallskip

In addition to the growth function~$\ga_G^S$, we define a
\emph{normal growth function} \ts $\wt \ga_{G,X}^S$, as the
number of elements in the group~$G$ which can be expressed
as words in the free group on length~$n$, which also
lie in the normal closure of elements $X\ssu G$.  In this
paper we consider only the case $X=\{r\}$, which we denote
$\wt \ga_{G,r}^S$.

\subsection{Notation for groups and their products}
To simplify the notation, we use
$\zz_m$ for $\zz/m\zz$ and trust this will not leave to any
confusion.  Let $\PSL_2(N)$ denotes group $\PSL_2(\Z/N\Z)$,
$\F_k$ denotes the free group on~$k$ generators.  By
$G \toto H$ we denote an \emph{epimorphism} between
the groups.

The group with presentation
$$
\cG = \la a,b,c,d \ts \mid \ts a^2=b^2=c^2=d^2=bcd=1 \ra
$$
will play a central role throughout the paper, as all our groups
and also all Grigorchuk groups are homomorphic images of~$\cG$.
We call it a \emph{free Grigorchuk group}.

The \emph{direct product} of groups $G$ and $H$ is denoted $G\oplus H$,
rather than more standard $G \times H$.  This notation
allows us to write infinite product as $\ts \bigoplus \ts G_i$,
where all but finitely many terms are trivial
and we will typically omit the index of summation.
We denote by \ts $\prod G_i$ \ts the (usually uncountable) group of
sequences of group elements, without any finiteness conditions.
Of course, groups \ts
$\bigoplus \ts G_i$ \ts nor \ts $\prod G_i$ \ts are not
finitely generated.

Finally, let $H \wr G = G\ltimes H^\ell$ denotes the permutation
wreath product of the groups, where $G \ssu \Si_\ell$
is a permutation group.


\subsection{Marked Groups and their homomorphisms}
All groups we will consider will have ordered finite generating
sets of the same size~$k$.  Whenever we talk mention a group~$G$,
we will mean a pair $(G,S)$ where $S=\{s_1,\ldots,s_k\}$ is a ordered
generating set of $G$ of size $k$.  Although typically
described with Cayley graph, the order on the generators is
crucial for our results.  We call these \emph{marked groups},
and $k$ will always denote the size of the generating set.
By a slight abuse of notation, will often drop~$S$ and refer
to a \emph{marked group~$G$}, when $S$ is either clear from
the context or not very relevant.

Throughout the paper, the homomorphisms between marked
groups will send one generating set to the other.  Formally,
let $(G,S)$ and $(G',S')$ be marked groups, where
$S=\{s_1,\ldots,s_k\}$ and $S'=\{s_1',\ldots,s_k'\}$.  Then
$\phi: (G,S) \to (G',S')$ is a \emph{marked group homomorphism}
if $\phi(s_j) = s_j'$, and this map on generators extends to
the (usual) homomorphism between groups: $\phi: G\to G'$.

An equivalent way to think of marked groups is to consider
epimorphisms $\F_k \toto G_1$,  $\F_k \toto G_2$, so that
the map between groups correspond to commutative diagrams
\begin{diagram}[nohug,height=7mm]
     &              &  G_1      \\
\F_k & \ruOnto(2,1) & \dTo   \\
     & \rdOnto(2,1) & G_2     \\
\end{diagram}

\subsection{Direct sums of marked groups}

Let $\{G_i\}$ be a sequence of marked groups defined above,
or more formally $\{(G_i,S_i)\}$.
Denote by $G=\bp G_i$ the subgroup of $\prod G_i$  generated
by diagonally embedding the generating sets~$S_i$, see Definition~\ref{d:product_of_marked_groups}.
Of course, group~$G$ critically depends on the ordering
of elements in~$S_i$.

\subsection{Miscellanea}
With $\om = (x_1,x_2,\dots)$ will denote an infinite word in
$\{0,1,2\}$ which will be used to construct the Grigorchuk group
$\bG_\om$. Such word is call stabilizing if all $x_i$ are eventually
the same. In this case the group $\bG_\om$ becomes virtually nilpotent.

We use $\log n$ to denote natural logarithms, but normally
the base will be irrelevant.  The radius of balls in the
groups will be denoted with~$n$.
Finally, we use $\nn=\{1,2,\ldots\}$.

\smallskip

\section{Limits of Groups}
\label{sec_limits}

All marked groups we consider will have ordered finite generating
sets of the same size~$k$, and all maps between marked groups will
send one generating set to the other.

\begin{defn}
\label{d:product_of_marked_groups}
Let $\{(G_i,S_i)\}, i\in I$ be a sequence of marked groups with generating sets
$S_i=\{s_{i1},\dots,s_{ik}\}$.
Define the $(\Gamma,S) = (\bp G_i,S)$ to be the subgroup of $\prod G_i$ generated by
diagonally embedding the generating sets of each $G_i$, i.e,
$\bp G_i = \< s_1,\dots, s_k\>$ where $s_j=\{s_{ij}\} \in \prod G_i$.
Notice that $\Gamma$ comes with canonical epimorphisms
$\zeta_i: \Gamma \toto G_i$. Often the generating sets will be
clear from the context and will simply use $\Ga = \bp G_i$.
When the index set contains only $2$ elements we denote the product
by $G_1\op G_2$.
\end{defn}

{\small
\begin{remark}
The notations $\bp G_i$ and $G_1 \op G_2$ are slightly misleading since
these products depend not only on the groups but also on the generating sets.
In this paper all groups are marked and come with a fixed generations set,
which justifies this abuse of the notation.
\end{remark}

\begin{remark}
\label{re:universal}
The group $G_1\op G_2$ satisfies the following universal property --
for any marked group $H$ such that the left 
two triangles commute,
there exits a homomorphism $H \to G_1\op G_2$.
\begin{diagram}[nohug,height=13mm]
     &              &   &  G_1     &                                      \\
\F_k & \rOnto       & H \ruOnto(3,1)\ruOnto(1,1)
                        & \rDotsto & G_1 \op G_2 \luOnto(1,1)^{\zeta_1}   \\
     & \rdOnto(3,1) &   &  G_2 \rdOnto(1,1) \ldOnto(1,1)_{\zeta_2}
                                   &                                      \\
\end{diagram}
\end{remark}
}

\smallskip

\begin{lemma}
\label{l:balls_in_products}
\label{lower_bound_for_ball}
\begin{enumi}
\item
If $G_i$ is any sequence of marked groups then growth function of
$\Gamma=\bp G_i$ is larger than the growth functions of each $G_i$, i.e.,
$$
\ga_{\Gamma}(n) \geq \ga_{G_i}(n) \quad \mbox{for all }~i\ts.
$$
\item
If $G_1$ and $G_2$ are two marked groups then the growth function of
$G_1\op G_2$ is bounded by the product of the growth functions for $G_i$
$$
\ga_{G_1\op G_2}(n) \leq \ga_{G_1}(n) \cdot \ga_{G_2}(n)\ts.
$$
\end{enumi}
\end{lemma}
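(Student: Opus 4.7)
The proof hinges on a single elementary observation: because every marked group homomorphism sends the ordered generating set to the ordered generating set, it is length-non-increasing, and hence maps the ball $B(n)$ into $B(n)$.

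For part (i), my plan is to use the canonical marked epimorphism $\zeta_i : \Gamma \toto G_i$ from Definition \ref{d:product_of_marked_groups}. Since $\zeta_i$ carries the generator $s_j$ to the generator $s_{ij}$, any word $w$ of length $\le n$ in $\Gamma$ is mapped to a word of length $\le n$ in $G_i$, and conversely every element of $B_{G_i,S_i}(n)$ is the image of some element of $B_{\Gamma,S}(n)$. Thus $\zeta_i$ restricts to a surjection $B_{\Gamma,S}(n) \toto B_{G_i,S_i}(n)$, which immediately yields $\ga_\Gamma(n) \geq \ga_{G_i}(n)$.

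For part (ii), I would use the natural inclusion $G_1 \op G_2 \hookrightarrow G_1 \times G_2$ coming from the construction as a subgroup of the full direct product. Every element $g \in G_1 \op G_2$ is determined by the pair $(\zeta_1(g), \zeta_2(g))$, so the map $g \mapsto (\zeta_1(g), \zeta_2(g))$ is injective. By the same length-non-increasing argument as above, if $g \in B_{G_1 \op G_2}(n)$ then $\zeta_i(g) \in B_{G_i}(n)$ for $i=1,2$. Hence $B_{G_1 \op G_2}(n)$ injects into $B_{G_1}(n) \times B_{G_2}(n)$, giving the desired bound $\ga_{G_1 \op G_2}(n) \leq \ga_{G_1}(n)\cdot \ga_{G_2}(n)$.

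There is no serious obstacle here; the only thing to be careful about is recording that the generating set $S = \{s_1,\ldots,s_k\}$ of $\bp G_i$ is defined precisely so that $\zeta_i(s_j) = s_{ij}$, which is exactly what makes all the relevant maps length-non-increasing. Once this bookkeeping is done, both statements reduce to counting arguments on balls.
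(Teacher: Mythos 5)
Your proof is correct and matches the paper's argument: part (i) uses that $\zeta_i$ restricts to a surjection $B_{\Gamma,S}(n) \toto B_{G_i,S_i}(n)$, and part (ii) uses that $\zeta_1\times\zeta_2$ is injective and carries balls into products of balls. The only addition you make is spelling out the length-non-increasing bookkeeping, which the paper leaves implicit.
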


\begin{proof}
By definition of the product of marked groups the map $\zeta_i: \Gamma\to G_i$
is not only surjective, but also satisfies
$\zeta_i(B_{\Gamma,S}(n)) = B_{G_i,S_i}(n)$, which implies the first part.
The injectivity of the product of the projections $\zeta_1$ and $\zeta_2$ \
and the observation
$$
\zeta_1\times \zeta_2 : B_{\Gamma,S}(n) \hookrightarrow B_{G_1,S_1}(n) \times B_{G_2,S_2}(n),
$$
imply the second part.
\end{proof}

\begin{defn}
We say that the sequence of marked groups $\{(G_i,S_i)\}$ \emph{converge}
(in the  the so-called \emph{Chabauty topology})
to a group $(G,S)$ if for any $n$ there exists $m=m(n)$ such that
such that for any $i>m$ the ball of radius $n$ in $G_i$
is the same as the ball of radius $n$ in~$G$.
We write $\lim G_i = G$.
\end{defn}

Equivalently, this can be stated as follows:
if
$R_i = \ker( \F_k \toto G_i)$ and $R = \ker( \F_k \toto G)$
then
$$
\lim_{i\to \infty} R_i \cap B_{\F_k}(n) \. = \. R \cap B_{\F_k}(n)\ts,
$$
i.e., for a fixed $n$ and  sufficiently large $i$ the sets
$R_i \cap B_{\F_k}(n)$ and $R \cap B_{\F_k}(n)$ coincide.

\begin{lemma}\label{l:product}
Let $\{G_i\}$ be a sequence of marked groups which converge to a marked group~$G$.
Define the $\Gamma = \bp G_i$, then there is an epimorphism $\pi: \Gamma \toto G$.
Moreover, the kernel of~$\pi$ is equal to the
intersection \ts $\Gamma \cap \bigoplus G_i$\ts.
\end{lemma}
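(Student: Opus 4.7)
My plan is to translate the whole statement into relations in the free group $\F_k$. Set $R_i = \ker(\F_k \toto G_i)$, $R = \ker(\F_k \toto G)$, and $R_\Gamma = \ker(\F_k \toto \Gamma)$, where these canonical epimorphisms come from the markings. Since $\Gamma$ sits inside $\prod G_i$ generated by the diagonal images of the generators, a word in $\F_k$ dies in $\Gamma$ if and only if it dies in every $G_i$, so $R_\Gamma = \bigcap_i R_i$. The Chabauty convergence $\lim G_i = G$ is the statement that for each $n$, one has $R_i \cap B_{\F_k}(n) = R \cap B_{\F_k}(n)$ for all sufficiently large $i$.

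The first step is to construct $\pi$. I would define it on generators by $\bar s_j := (s_{ij})_i \mapsto s_j$; the only thing to check is that the resulting map extends from $\F_k$ to $\Gamma$, i.e., that $R_\Gamma \subseteq R$. Given $w \in R_\Gamma$ of length $n$, applying convergence at length $n$ produces an index $i_0$ with $w \in R_{i_0} \cap B_{\F_k}(n) = R \cap B_{\F_k}(n) \subseteq R$. Surjectivity of the resulting map $\pi : \Gamma \toto G$ is automatic, as its image contains a generating set for $G$.

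For the identification of $\ker \pi$, I would write each $\gamma \in \Gamma$ as $\gamma = w(\bar s_1,\ldots,\bar s_k)$ for some $w \in \F_k$, observing that the $i$-th coordinate of $\gamma$ inside $\prod G_i$ is precisely the image of $w$ in $G_i$. From this one reads off two tautological equivalences: $\gamma \in \bigoplus G_i$ iff $w \in R_i$ for all but finitely many $i$, and $\pi(\gamma) = e$ iff $w \in R$. Applying the convergence equality $R_i \cap B_{\F_k}(|w|) = R \cap B_{\F_k}(|w|)$, valid for all $i$ past a threshold depending on $|w|$, in one direction gives $\Gamma \cap \bigoplus G_i \subseteq \ker \pi$ and in the other gives the reverse inclusion.

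The hard part, such as it is, is simply to be honest about the dependence of the convergence threshold on the length of the word used to represent a given $\gamma$: the stabilization of the $R_i$ is uniform on every fixed ball of $\F_k$ but not globally on $\F_k$, so one must keep $|w|$ in mind when invoking it. Once that bookkeeping is set up, the entire argument is a short chase among $\F_k$, $\Gamma$, $\prod G_i$, $\bigoplus G_i$, and $G$.
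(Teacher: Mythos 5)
Your proof is correct and follows essentially the same route as the paper: identify $\ker(\F_k \toto \Gamma)$ with $\bigcap_i R_i$, use Chabauty convergence on a ball of radius $|w|$ to pass between membership in $R_i$ (for large $i$) and membership in $R$, and read off both the well-definedness of $\pi$ and both inclusions of $\ker\pi = \Gamma \cap \bigoplus G_i$. Your phrasing in terms of the kernels $R_i$, $R$, $R_\Gamma$ is just the paper's own equivalent reformulation of convergence, made slightly more explicit.
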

\begin{proof}
There is an obvious map $\pi$ which sends the generators of $\Gamma$ to
the generators of~$G$.
A word $w$ represents the trivial element in $\Gamma$ if and only if $w$ is
trivial in all $G_i$. Therefore this word is trivial in infinitely many of $G_i$
and is as well trivial in the limit $G$, i.e.,
the map $\pi$ extends to a group homomorphism.

The convergence of $\{G_i\} \to G$ implies that
if a word $w\in \F$ of length $n$ which
evaluates to $\{g_i\} \in \prod G_i$
is in the kernel of $\pi$ then the components $g_i$ have to be
trivial for large $i$ (otherwise the ball of radius $n$ in the Cayley graph
of $G_i$ will  be different form the one in~$G$).
Therefore, the word $w \in \bigoplus G_i$.  The other
inclusion is obvious.
\end{proof}

Lemma~\ref{l:product} allows us to think of $G$ as the \emph{group at infinity}
for~$\Gamma$.
We will be interested in sequences of groups which satisfy the additional
property that
\begin{equation*}
\label{condition}
\tag{spliting}
\lim_{i\to \infty} G_i = G
\quad \mbox{and} \quad
\Gamma \, = \, \left[\bp G_i\right] \cap \left[\bigoplus G_i\right]
\, = \, \bigoplus N_i\ts,
\end{equation*}
where $N_i$ are normal subgroups of $G_i$.

\begin{lemma}
If the groups $G_i$ satisfy the condition~(\ref{condition}), then
there exists a group homomorphism $\pi_i:G \to G_i/N_i$ which makes
the following diagram commute: 
\begin{diagram}[height=7mm]
1 & \rTo & \ker \pi & \rInto & \Gamma & \rOnto{\pi} & G  & \rTo & 1\\
  &   & \dOnto{\zeta_i}  &   & \dOnto{\zeta_i}  &  &\dDotsto{\pi_i} & &\\
1 & \rTo & N_i & \rInto & G_i & \rOnto & G_i/N_i  & \rTo & 1
\end{diagram}
\end{lemma}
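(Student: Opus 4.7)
The plan is to obtain $\pi_i$ via the universal property of the quotient $\Gamma/\ker\pi \cong G$, so the whole task reduces to verifying that the composition $\Gamma \xrightarrow{\zeta_i} G_i \toto G_i/N_i$ kills $\ker\pi$; then this composition factors uniquely through $\pi$ and yields the desired map $\pi_i$, with commutativity of the right-hand square built in by construction.

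The key computation is the inclusion $\zeta_i(\ker\pi) \subseteq N_i$. By Lemma~\ref{l:product} together with the (spliting) hypothesis, $\ker\pi = \bigoplus N_j$, where the sum sits inside $\Gamma \subseteq \prod G_j$ via the diagonal-style embedding of the generating sets. Projection of $\prod G_j$ onto the $i$-th factor is exactly $\zeta_i$, and it sends an element of $\bigoplus N_j$ to its $i$-th coordinate, which lies in $N_i$. Hence $\zeta_i(\ker\pi)\subseteq N_i$, and in fact equality holds since every element of $N_i$ is the image of the sequence that is $N_i$ in the $i$-th slot and trivial elsewhere (an element which does lie in $\bp G_j = \Gamma$ because $N_i \leq G_i$).

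Once this is established, I would define $\pi_i: G \to G_i/N_i$ as follows. For $g \in G$, pick any lift $\tilde g \in \Gamma$ with $\pi(\tilde g) = g$ (possible since $\pi$ is surjective) and set
\[
\pi_i(g) \ = \ \zeta_i(\tilde g) \bmod N_i.
\]
Well-definedness is immediate from the previous paragraph: if $\pi(\tilde g) = \pi(\tilde g')$ then $\tilde g (\tilde g')^{-1} \in \ker\pi$, so $\zeta_i(\tilde g)\zeta_i(\tilde g')^{-1} \in N_i$. The map is a homomorphism because $\zeta_i$ and the reduction mod $N_i$ are, and surjectivity is inherited from that of $\zeta_i$. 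Commutativity of the right square in the diagram is exactly the definition of $\pi_i$: starting from $\tilde g \in \Gamma$, going down by $\zeta_i$ and then to $G_i/N_i$ gives the same element as going right by $\pi$ and then down by $\pi_i$.

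There is no real obstacle here; the statement is essentially the standard first isomorphism theorem applied to the composition $\Gamma \to G_i \to G_i/N_i$, with the only content being the identification $\ker\pi = \bigoplus N_j$ from condition (spliting). The one small point to be careful about is distinguishing the external direct sum $\bigoplus N_j$ from its image in $\Gamma \subseteq \prod G_j$, so that the projection $\zeta_i$ really does realize the coordinate map; this is precisely what (spliting) asserts.
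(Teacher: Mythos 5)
Your proposal is correct and is essentially the same argument the paper gives, just spelled out: the paper's one-line proof invokes exactness of the rows, and you make explicit that this means checking $\zeta_i(\ker\pi)\subseteq N_i$ (via $\ker\pi=\bigoplus N_j$ from Lemma~\ref{l:product} and the spliting hypothesis) so that $\zeta_i$ followed by reduction mod $N_i$ factors through $\pi$. The only nit is your parenthetical justification for surjectivity of $\zeta_i\colon\ker\pi\to N_i$ --- the reason the delta sequence lies in $\Gamma$ is that it lies in $\bigoplus N_j=\ker\pi\subseteq\Gamma$, not merely that $N_i\le G_i$; but surjectivity there is not even needed for the existence of $\pi_i$, so this does not affect correctness.
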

\begin{proof}
The existence and uniqueness of the homomorphism $\pi$ follows from the
exactness of the rows in the diagram above.
\end{proof}

Using these maps one can obtain estimates for the size of the ball
in the groups~$\Gamma$:

\begin{lemma}
\label{upper_bound_for_ball}
If the groups $\{G_i\}$ satisfy the condition~(\ref{condition}) and
$B_{G_i}(n)$ is the same as $B_G(n)$
for $i > m$ then
$$
\ga_\Gamma(n) \. \leq \. \ga_{G}(n) \, \prod_{j \leq m} \. |N_j|\ts.
$$
\end{lemma}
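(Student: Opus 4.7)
The plan is to bound $|B_\Gamma(n)|$ by constructing an injective map from $B_\Gamma(n)$ into a product whose cardinality is $\gamma_G(n) \cdot \prod_{j \leq m}|N_j|$. Specifically, consider
$$
\Psi: B_\Gamma(n) \, \longrightarrow \, B_G(n) \times \prod_{j \leq m} G_j, \qquad g \,\longmapsto\, \bigl(\pi(g),\, \zeta_1(g),\,\ldots,\,\zeta_m(g)\bigr).
$$
The first step is to verify that $\Psi$ is injective. Suppose $\Psi(g) = \Psi(h)$ with $g,h \in B_\Gamma(n)$. Then $\pi(gh^{-1}) = 1$, so by the previous lemma $gh^{-1} \in \ker\pi = \bigoplus N_i$, and by assumption $\zeta_j(gh^{-1}) = 1$ for all $j \leq m$. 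To conclude $g = h$, we must show $\zeta_i(g) = \zeta_i(h)$ also for all $i > m$, since $\Gamma \hookrightarrow \prod G_i$.

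The second step is where the hypothesis $B_{G_i}(n) = B_G(n)$ for $i > m$ enters. This equality means that the identification of balls of radius $n$ in $G_i$ and in $G$ via word representatives is canonical: any word $w \in \F_k$ of length $\leq n$ represents the same element (under the identification) in both $G_i$ and $G$. Hence for any $g \in B_\Gamma(n)$ represented by a word $w$ of length $\leq n$, we have $\zeta_i(g)$ matching $\pi(g)$ under the identification $B_{G_i}(n) = B_G(n)$. Consequently $\pi(g) = \pi(h)$ forces $\zeta_i(g) = \zeta_i(h)$ for every $i > m$, completing the injectivity argument.

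The third step sharpens the image of $\Psi$ from $B_G(n) \times \prod_{j \leq m} B_{G_j}(n)$ down to $B_G(n) \times \prod_{j \leq m} N_j$. The commutative diagram producing $\pi_j: G \to G_j/N_j$ says $\zeta_j(g) \cdot N_j = \pi_j(\pi(g))$ as cosets in $G_j/N_j$. Therefore, once $\pi(g) \in B_G(n)$ is fixed, the element $\zeta_j(g)$ is constrained to lie in a single $N_j$-coset, giving at most $|N_j|$ possibilities. Choosing a section $\ell_j : G_j/N_j \to G_j$ and writing $\zeta_j(g) = n_j(g) \cdot \ell_j(\pi_j(\pi(g)))$ with $n_j(g) \in N_j$ exhibits the refined injection
$$
B_\Gamma(n) \, \hookrightarrow \, B_G(n) \times \prod_{j \leq m} N_j,
$$
and taking cardinalities yields the desired bound.

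The only non-routine point is the second step, namely that $\pi$ and $\zeta_i$ agree on $B_\Gamma(n)$ for $i > m$ modulo the canonical identification of balls. Once this is in place everything else is straightforward coset-counting; so I would invest the most care in phrasing the ball identification precisely in terms of word representatives, to avoid the mild ambiguity that $B_{G_i}(n) = B_G(n)$ might otherwise suggest.
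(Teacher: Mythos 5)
Your proposal is correct and essentially the same as the paper's argument: both hinge on the ball identification to force $\zeta_i(g)=\zeta_i(h)$ for $i>m$ once $\pi(g)=\pi(h)$, and both then count the remaining freedom in coordinates $j\le m$ by $\prod_{j\le m}|N_j|$ (the paper phrases this as a bound on fibers of $\pi$ via $g^{-1}h\in\Gamma\cap\bigoplus_{j\le m}G_j=\bigoplus_{j\le m}N_j$, whereas you phrase it via the coset constraint from $\pi_j$, but these are the same use of the splitting hypothesis). The only cosmetic redundancy is that in your injectivity step the invocation of $\ker\pi=\bigoplus N_i$ is not actually used — the ball identification alone gives what you need.
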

\begin{proof}
If two elements  $g,h \in B_\Gamma(n)$ are send to the same element
elements in $B_G(n)$ then are also the same in $B_{G_j}(n)$ for all
$j> m$, i.e., their difference $g^{-1} h$ is inside
$$
\Gamma \cap \bigoplus_{j\leq m} G_i = \bigoplus_{j \leq m} N_j.
$$
Therefore, the fibers of the restriction of $\pi$ to $B_\Gamma(n)$
have size at most
$\prod_{j \leq m} \ts |N_j|$,
which implies the inequality in the lemma.
\end{proof}

\smallskip

\section{The Grigorchuk group}\label{s:gri}

\subsection{Basic results}
In this section we present variations standard results on the Grigorchuk
groups $\bG_\om$ (cf.~Subsection~\ref{ss:fin-seq}).
Rather than give standard definitions as a subgroup of $\Aut (\rT_2)$,
we define $\bG$ via its properties.  We refer to~\cite{GP,Har1}
for a more traditional introduction and most results in this
subsection.

\begin{defn}
\label{d:twist}
Let $\varphi: \cG \toto \cG$ denote the automorphism of order $3$ of the group
$G$ which cyclicly permutes the generators $b$, $c$ and $d$, i.e.,
$$
\varphi(a) = a,
\quad
\varphi(b) = c,
\quad
\varphi(c) = d,
\quad
\varphi(d) = b.
$$
\end{defn}

\begin{defn}
Let $\pi: \cG \toto H$ be an epimorphism, i.e., suppose group $H$
comes with generating set consisting of $4$ involutions
$\{a,b,c,d\}$ which satisfy $bcd=1$.  By $F(H)$
we define the subgroups of $H \wr \zzz  =  \zzz \ltimes (H \oplus H)$
generated by the elements $A,B,C,D$ defined as
$$
A = (\xi; \ts 1,1)\ts,
\quad
B = (1; \ts a,b)\ts,
\quad
C = (1; \ts a,c)
\quad \text{and} \ \
D = (1; \ts 1, d)\ts,
$$
where $\xi^2=1$ is the generator of~$\zzz$.
It is easy to verify that $A,B,C,D$ are involutions which satisfy $BCD=1$,
which allows us to define an epimorphism $\wt F(\pi): \cG \to F(H)$.

The construction can be twisted by the powers automorphism $\varphi$
$$
\wt F_x(\pi): = \wt F(\pi \circ \varphi^{-x}) \circ \varphi^{x}.
$$
\begin{diagram}[nohug,height=7mm]
\cG & \rTo{\varphi^x} & \cG          & &
\cG & \rTo{\varphi^x} & \cG          \\
&\rdOnto(1,2)_{\pi_x} \ldOnto(1,2)_{\pi} &    & &
&\rdOnto(1,2)_{\wt F(\pi_x)} \ldOnto(1,2)_{\wt F_x(\pi)} &    \\
    &       H         &              & &
    &       F_x(H)    &              \\
\end{diagram}

An equivalent way of defining the group $F_x(H)$ is as the subgroups
generated by
\begin{align*}
A_0 =&  (\xi; \ts 1,1)\ts,  \quad &
B_0 =& (1; \ts a,b)\ts, \quad &
C_0 =& (1; \ts a,c) \quad & 
D_0 =& (1; \ts 1, d)\ts, \\
A_1 =&  (\xi; \ts 1,1)\ts,  \quad &
B_1 =& (1; \ts a,b)\ts, \quad &
C_1 =& (1; \ts 1,c) \quad & 
D_1 =& (1; \ts a, d)\ts, \\
A_2 =&  (\xi; \ts 1,1)\ts,  \quad &
B_2 =& (1; \ts 1,b)\ts, \quad &
C_2 =& (1; \ts a,c) \quad & 
D_2 =& (1; \ts a, d)\ts,
\end{align*}
\end{defn}

{\small
\begin{remark}
Strictly speaking, the notation $F_i(H)$ is not precise since in
order to define this group we need to specify a generating set,
thus the correct notation should be $\wt F_i(\pi)$.
However since all groups~$H$ are marked, i.e., come with an epimorphism
$\cG \toto H$, this allows us to slightly simplify the notation.
\end{remark}
}

\begin{prop}
Each $F_x$ is a functor form the category of homomorphic images of $\cG$ to itself,
i.e., a group homomorphism $H_1 \to H_2$ which preserves the generators induces,
a group homomorphism $F_x(H_1) \to F_x(H_2)$.
\begin{diagram}[nohug,height=8mm,width=13mm]
    &                    &  H_1      & \quad\quad &
    &                    &  F_i(H_1) \\
\cG & \ruOnto(2,1)^{\pi_1} & \dOnto_{\theta}& &
\cG & \ruOnto(2,1)^{\wt F_x(\pi_1)} & \dOnto_{F_x(\theta)} \\
    & \rdOnto(2,1)_{\pi_2} & H_2     & &
    & \rdOnto(2,1)_{\wt F_x(\pi_2)} & F_x(H_2)
\end{diagram}
\end{prop}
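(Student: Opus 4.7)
The plan is a direct verification using the explicit generator formulas, since ``functoriality on the category of marked quotients of $\cG$'' is really the statement that a generator-preserving map induces a generator-preserving map on the constructed groups. I first handle the untwisted case $F=F_0$ and then deduce the twisted cases from the definition \ts $\wt F_x(\pi) = \wt F(\pi\circ \varphi^{-x})\circ \varphi^x$.

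Let $\theta: H_1\to H_2$ be a marked homomorphism, so that the marked generators $(a_1,b_1,c_1,d_1)$ of $H_1$ are sent to the marked generators $(a_2,b_2,c_2,d_2)$ of $H_2$, and equivalently $\pi_2 = \theta\circ \pi_1$. The map $\theta$ extends in an obvious way to a group homomorphism
\[
\Theta \. := \. \id_{\zzz}\ltimes(\theta\oplus \theta)\ts: \ \. H_1 \wr \zzz \ \longrightarrow \ H_2 \wr \zzz\ts,
\]
of the ambient wreath products. Reading off the formulas in the definition of $F(H)$, one computes
\[
\Theta(\xi;1,1) = (\xi;1,1)\ts,\quad \Theta(1;a_1,b_1) = (1;a_2,b_2)\ts,\quad \Theta(1;a_1,c_1) = (1;a_2,c_2)\ts,\quad \Theta(1;1,d_1) = (1;1,d_2)\ts,
\]
so $\Theta$ sends the ordered marked generators $(A,B,C,D)$ of $F(H_1)\ssu H_1\wr\zzz$ to the corresponding generators of $F(H_2)\ssu H_2\wr\zzz$. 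Therefore $\Theta$ restricts to a marked homomorphism $F(\theta):F(H_1)\to F(H_2)$, and clearly $F(\theta_2\circ\theta_1) = F(\theta_2)\circ F(\theta_1)$ and $F(\id)=\id$. This establishes functoriality of $F=F_0$.

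For $x\in\{0,1,2\}$, suppose $\theta:H_1\to H_2$ is a marked homomorphism with respect to the markings $\pi_i:\cG\toto H_i$, i.e.\ $\pi_2 = \theta\circ \pi_1$. Precomposing with the automorphism $\varphi^{-x}$ of $\cG$ gives $\pi_2\circ \varphi^{-x} = \theta\circ(\pi_1\circ \varphi^{-x})$, so $\theta$ is still a marked homomorphism between the twisted markings. Applying the untwisted functoriality already proved to $\theta$ with these new markings yields a homomorphism between $\wt F(\pi_1\circ\varphi^{-x})$-image and $\wt F(\pi_2\circ\varphi^{-x})$-image which respects the marked generators. Postcomposing with $\varphi^x$, which simply relabels the generators of the resulting groups, we obtain the desired map $F_x(\theta):F_x(H_1)\to F_x(H_2)$. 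Functoriality of $F_x$ now follows from functoriality of $F$ and the fact that the twist by $\varphi^{\pm x}$ is an isomorphism on $\cG$ acting uniformly on both sides.

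The only thing one could worry about is whether the restriction of $\Theta$ really lands in the subgroup $F(H_2)$, but this is automatic from the computation above: a map of groups is determined on a subgroup by its action on any generating set, and the generators of $F(H_1)$ are by construction sent to the generators of $F(H_2)$. Thus there is no substantive obstacle, and the proof is a routine unwinding of the definitions.
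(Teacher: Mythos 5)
Your proof is correct. The paper itself does not supply a proof of this proposition (it is stated without argument and the text moves on to the next proposition, treating functoriality as immediate from the construction), so there is nothing to compare against in terms of approach. Your explicit verification is exactly the natural one: observing that $\Theta = \id_{\zzz}\ltimes(\theta\oplus\theta)$ is a homomorphism of the ambient wreath products because the two coordinate maps agree and hence commute with the swap action of $\zzz$, checking on the four generators that $\Theta$ carries $F(H_1)$ into $F(H_2)$, and then reducing the twisted case $F_x$ to $F_0$ by precomposing the markings with $\varphi^{-x}$ and postcomposing with $\varphi^{x}$. One small remark you could add for completeness: it is worth saying explicitly that $\theta\oplus\theta$ is equivariant for the $\zzz$-swap precisely because the same $\theta$ is used on both factors, which is what makes $\Theta$ well-defined on the semidirect product. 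Otherwise the argument is complete and there is no gap.
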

\begin{prop}
\label{pro:F_preserves_products}
The functors $F_x$ commutes with the products of marked groups, i.e.,
$$
F_x\left(\bp H_j\right) = \bp F_x(H_j).
$$
\end{prop}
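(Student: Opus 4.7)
The plan is to identify both sides of the equality as subgroups of a single ambient group and then verify that, under this identification, the two marked generating tuples coincide. First, by definition $F_x(\bp H_j)$ sits inside $(\bp H_j) \wr \zzz$, which is tautologically a subgroup of $(\prod H_j) \wr \zzz = \zzz \ltimes (\prod H_j \oplus \prod H_j)$; so the left-hand side is already embedded in the ambient group.

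For the right-hand side, I would use the natural injective homomorphism
\[ \iota : (\prod H_j) \wr \zzz \hookrightarrow \prod (H_j \wr \zzz), \qquad (\epsilon; (g_j)_j, (g'_j)_j) \mapsto ((\epsilon; g_j, g'_j))_j, \]
whose image is precisely the subgroup of sequences with constant $\zzz$-component. Each generator of $F_x(H_j)$ has $\zzz$-component equal to $\xi$ (for $A_x$) or $1$ (for $B_x,C_x,D_x$), independently of $j$; consequently the diagonal generators of $\bp F_x(H_j) \subseteq \prod F_x(H_j)$ have constant $\zzz$-component and lie in the image of $\iota$. This lets us view $\bp F_x(H_j)$ as a subgroup of $(\prod H_j) \wr \zzz$ as well.

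The last step is to observe that under these two identifications the four marked generators coincide. For example, the $B$-generator of $F_x(\bp H_j)$ is $(1; (a_j)_j, (b_j)_j)$, where $(a_j)_j$ and $(b_j)_j$ are the diagonal generators of $\bp H_j$; the diagonal $B$-generator of $\bp F_x(H_j)$ is the tuple $((1; a_j, b_j))_j$, whose $\iota$-preimage is exactly $(1; (a_j)_j, (b_j)_j)$. The same calculation is immediate for $A$, and using the explicit formulas for $C_x$ and $D_x$ it goes through in each of the three twist cases $x=0,1,2$. Since the two marked groups are generated by the same tuple inside a common ambient group, they are equal.

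Alternatively, one could argue more abstractly: functoriality of $F_x$ applied to the projections $\bp H_j \toto H_j$ yields compatible epimorphisms $F_x(\bp H_j) \toto F_x(H_j)$, which by the universal property of the marked direct sum (Remark~\ref{re:universal}) assemble into a marked surjection $F_x(\bp H_j) \toto \bp F_x(H_j)$; its injectivity is checked by reading coordinates in $(\bp H_j) \wr \zzz$. The only real obstacle in either approach is bookkeeping, namely tracking the twist parameter $x$ through all three cases. No deeper difficulty arises because the wreath product with $\zzz$ and the marked sum $\bp$ are both applied coordinatewise and so commute once the definitions are unwound.
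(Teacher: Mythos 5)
Your proposal is correct and takes essentially the same approach as the paper: the paper's one-line proof invokes precisely the two routes you describe (functoriality plus the universal property of $\bp$, or a direct check from the definitions), and you have simply unwound them in detail, including the useful observation that the embedding $\iota:(\prod H_j)\wr\zzz\hookrightarrow \prod(H_j\wr\zzz)$ onto constant-$\zzz$-component sequences lets one compare the two marked generating tuples in a common ambient group.
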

\begin{proof}
This is immediate consequence of the functoriality of $F_i$ and the universal
property of the products of marked groups. Equivalently one can check directly
from the definitions.
\end{proof}


\begin{defn}
One can define the functor $F_\om$ for any finite word $\om \in \{0,1,2\}^*$
as follows
$$
F_{x_1x_2\dots x_i}(H):= F_{x_1}(F_{x_2}(\dots F_{x_i}(H)\dots))
$$
If $\omega$ is an infinite word on the letters $\{0,1,2\}$ by
$F_\omega^i$ we will denote the functor $F_{\omega_i}$ where
$\omega_i$ is the prefix of $\omega$ of length $i$.
\end{defn}
\begin{thm} [cf.~\cite{Gri3}]
\label{t:grig}
The Grigorchuk group $\bG$ is the unique group such that
$\mathbf{G} = F_{012}(\mathbf{G})$.
\end{thm}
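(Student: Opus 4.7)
The plan is to prove existence by direct verification from the standard tree action of $\bG$, and to establish uniqueness through a limit argument using iterated applications of the functor $F_{012}$.

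\textbf{Existence.} I would identify $\bG$ with the Grigorchuk subgroup of $\Aut(\rT_2)$ generated by $a, b, c, d$ satisfying the classical self-similar recursion: $a$ swaps the two subtrees at the root, while $b = (a, c)$, $c = (a, d)$, $d = (1, b)$ fix the root and act recursively on the two subtrees. Along the rightmost branch of $\rT_2$, the restrictions of $b, c, d$ to the right subtree are cyclically permuted by $\varphi: b \to c \to d \to b$ at each level, so descending three levels recovers the original labeling. A direct comparison with Definition~\ref{d:twist} shows that $F_0$ encodes this untwisted one-level recursion, while $F_1$ and $F_2$ encode its $\varphi$- and $\varphi^2$-twists. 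The threefold composition $F_0 \circ F_1 \circ F_2$ therefore reproduces the standard generators of $\bG$ from themselves, yielding $\bG \cong F_{012}(\bG)$ as marked groups.

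\textbf{Uniqueness.} Suppose $H$ is a marked group (four involutions with $bcd=1$) satisfying $H \cong F_{012}(H)$. Iterating the isomorphism $n$ times embeds $H$ in the $3n$-fold iterated wreath power of $\zzz$, which is precisely the automorphism group of the finite binary tree of depth $3n$. These embeddings are compatible with the level-truncation maps, and so in the inverse limit one obtains a homomorphism $H \to \Aut(\rT_2)$. By the calculation in the existence part, the images of the generators of $H$ coincide with the standard generators of $\bG$, so the image of this homomorphism is exactly $\bG$. Assuming injectivity (see below), we conclude $H \cong \bG$.

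\textbf{Main obstacle.} The delicate step is establishing faithfulness of the limit homomorphism $H \to \Aut(\rT_2)$, i.e., showing that no nontrivial element of $H$ acts trivially on every level of $\rT_2$. The presence of the swap generator $A = (\xi;1,1)$ at each application of $F_x$ forces any nontrivial word in $H$ to be detected at some finite depth; this is the classical contracting-recursion argument originating in~\cite{Gri3}, which then identifies $H$ with the full Grigorchuk group rather than some proper quotient.
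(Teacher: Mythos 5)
The paper does not prove Theorem~\ref{t:grig}; it is stated with a citation to Grigorchuk~\cite{Gri3}, and the subsequent remark merely explains the defining recursion $\bG_{x\om}=F_x(\bG_\om)$ with $\bG=\bG_{(012)^\infty}$ — the authors even say explicitly ``Although we will not use Theorem~\ref{t:grig} \ldots''. So you are supplying a proof the paper itself deliberately omits, which is fine, but it changes the standard of comparison: I can only assess your argument on its own merits.

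Your existence argument is essentially correct. The classical recursion $b=(a,c)$, $c=(a,d)$, $d=(1,b)$ twists the right-hand restrictions by $\varphi$ at each level, and this matches $\bG=F_0(\bG_{(120)^\infty})=F_0F_1(\bG_{(201)^\infty})=F_0F_1F_2(\bG_{(012)^\infty})$, since three applications of the shift return to the starting generating set. One caution: $F_0(\bG)\ne\bG$ as marked groups — $F_0$ must be fed the $\varphi$-relabelled copy $\bG_{(120)^\infty}$, not $\bG$ itself. Your phrase ``$F_0$ encodes this untwisted one-level recursion'' glosses over this; the twist is already absorbed into whose generators sit in the right-hand slots. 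It works out, but you should state it in a way that makes the bookkeeping unambiguous.

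The uniqueness argument contains a genuine gap, which you correctly identify but do not close. You want faithfulness of the inverse-limit map $H\to\Aut(\rT_2)$, and you appeal to ``the classical contracting-recursion argument.'' But the version of that argument available in this paper, Lemma~\ref{l:contracting_property_of_G}, explicitly requires the four generators of the base group to be \emph{nontrivial} involutions. A hypothetical fixed point $H\cong F_{012}(H)$ is not a priori of this form: some of $b,c,d$ could collapse, and then the contraction estimate simply does not apply to $H$'s own word metric without a further check. You also need to show that the image of $H$ in $\Aut(\rT_2)$ is exactly $\bG$ (not just contained in it, and not a proper quotient in the image), and that the induced surjection $H\toto\bG$ has trivial kernel — which is again the faithfulness point. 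As written, uniqueness is an outline of an approach, not a proof; the missing step is precisely the one you flag as ``the main obstacle,'' and it needs the non-degeneracy of the generators and a quantitative contraction for $H$ itself, neither of which is established.
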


{\small
\begin{remark}
In~\cite{Gri3}, Grigorchuk defined a group $\bG_\omega$ for any
infinite word $\omega$.  One way to define these groups
is by $\bG_{x\omega} = F_x(\bG_\omega)$, where $x$ is any
letter in $\{0,1,2\}$.  The \emph{first Grigorchuk group}
is denoted $\bG = \bG_{(012)^\infty}$, which
corresponds to a periodic infinite word.
If the word $\om$ stabilize then the group $\bG_\om$
is virtually nilpotent and has polynomial growth.
\end{remark}
}

Although we will not use Theorem~\ref{t:grig}, the following
constructions gives the idea of the connection.
Let $\bG_{\omega,i} = F^i_\omega(\mathbf{1})$, where $\mathbf{1}$ denotes the
trivial group with one element (with the trivial map
$\cG \toto \mathbf{1}$).

\begin{prop}\label{p:grig-rooted}
There is a canonical epimorphism $\bG_\omega \toto \bG_{\omega,i}$.
The groups $\bG_{\omega,i}$ naturally act on finite binary rooted tree
of depth $i$ and this action comes from the standard action
of the Grigorchuk group on the infinite binary tree~$\rT_2$. \qed
\end{prop}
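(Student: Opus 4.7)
The plan is a short induction on $i$, powered by the recursive identity $\bG_{\omega,i} = F_{x_1}(\bG_{\sigma\omega, i-1})$, where $\sigma\omega = (x_2, x_3, \ldots)$ denotes the shifted sequence. The base case $i=0$ is trivial: $\bG_{\omega,0} = \mathbf{1}$ receives the unique (trivial) epimorphism from $\bG_\omega$, and acts trivially on the depth-$0$ rooted tree consisting of a single vertex.

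For the inductive step, the remark following Theorem~\ref{t:grig} gives the identity $\bG_\omega = F_{x_1}(\bG_{\sigma\omega})$, and by induction there is a canonical epimorphism $\bG_{\sigma\omega} \toto \bG_{\sigma\omega, i-1}$. Applying the already-established functoriality of $F_{x_1}$ produces
$$
\bG_\omega \;=\; F_{x_1}(\bG_{\sigma\omega}) \;\toto\; F_{x_1}(\bG_{\sigma\omega, i-1}) \;=\; \bG_{\omega, i},
$$
which is the desired canonical epimorphism, since it was produced from canonical data by a functor.

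For the tree action, I would again induct. Suppose $\bG_{\sigma\omega, i-1}$ acts on the depth-$(i-1)$ binary rooted tree $T_{i-1}$. The inclusion $\bG_{\omega,i} = F_{x_1}(\bG_{\sigma\omega, i-1}) \hookrightarrow \bG_{\sigma\omega, i-1} \wr \zzz$ supplied by Definition~\ref{d:twist} then yields an action on the tree $T_i$ obtained by grafting two copies of $T_{i-1}$ under a common new root: the $\zzz$-factor (generated by $\xi$, i.e., by the generator $A$) swaps the two subtrees, and each of the two copies of $\bG_{\sigma\omega, i-1}$ acts on the corresponding subtree. Compatibility with the action on $\rT_2$ is then verified by matching, level by level, the usual self-similar embedding $\bG_\omega \hookrightarrow \bG_\omega \wr \zzz$ with the embedding that defines $F_{x_1}$. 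The one bookkeeping step I expect to be delicate is the twist: the $\varphi^{x_1}$ in the definition of $F_{x_1}$ is precisely what realigns the generators $\{a,b,c,d\}$ so that, at each level, the labels in the inductive action agree with the standard action of $\bG_\omega$ on $\rT_2$; making this identification explicit and carrying it consistently through the induction is the only subtle point.
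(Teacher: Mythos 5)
The paper provides no proof of this proposition (it carries a terminal \qed within the statement, i.e., the authors treat it as an immediate consequence of the surrounding definitions and the recursion $\bG_{x\omega} = F_x(\bG_\omega)$ noted in the remark after Theorem~\ref{t:grig}). Your inductive argument — via $\bG_{\omega,i} = F_{x_1}(\bG_{\sigma\omega,\ts i-1})$, $\bG_\omega = F_{x_1}(\bG_{\sigma\omega})$, and functoriality of $F_{x_1}$ — is the natural way to make the epimorphism explicit, and your description of the tree action by grafting two depth-$(i-1)$ trees under a new root with $A=(\xi;1,1)$ swapping them is exactly the wreath-recursion picture intended. You have correctly identified the one real point needing care (that the $\varphi^{x_1}$-twist in the definition of $F_{x_1}$ is what makes the inductively built action agree with the standard action of $\bG_\omega$ on $\rT_2$); since the paper asserts this without verification, your flagging of it is a feature, not a gap.
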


{\small
\begin{remark}
\label{r:inclusion_into_wreath_product}
The group $F^i_\om(H)$ is a subgroup of the permutational wreath product
$H \wr_{X_i} \bG_{\om,i}$, where $X_i$ is the set of leaves of the
binary tree of depth~$i$ (cf.~Subsection~\ref{ss:main-sketch}).
\end{remark}
}

\smallskip

\subsection{Contraction in Grigorchuk groups}

\begin{lemma}
\label{l:contracting_property_of_G}
Let $\pi:\cG \toto H$ be an epimorphism, i.e., group~$H$ is
generated by $4$ \emph{nontrivial} involutions which satisfy $bcd=1$.
If the word $\om$ does not stabilize,
then the balls of radius $n \leq \cff(m)$ in the groups
$F^m_{\om}(H)$ and $\bG_{\om}$
coincide, where $\cff(m) = 2^{m}-1$  is strictly
increasing function $\cff: \nn\to\nn$.
\end{lemma}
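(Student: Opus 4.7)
The plan is to prove the lemma by induction on~$m$. The base case $m=1$ is a direct verification that the balls of radius $\cff(1) = 1$ in $F_{x_1}(H)$ and $\bG_\om$ both equal $\{1,a,b,c,d\}$ with all five elements distinct: in $F_{x_1}(H)$ this follows from the explicit formulas $A=(\xi;1,1)$, $B=(1;a,b)$, $C=(1;a,c)$, $D=(1;1,d)$ together with the nontriviality of the generators of~$H$; in $\bG_\om$ it is a standard fact that holds whenever $\om$ does not stabilize (otherwise $\bG_\om$ becomes virtually nilpotent and some generators may collapse).

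For the inductive step, I would invoke the structural identities $F^m_\om(H) = F_{x_1}\bigl(F^{m-1}_{\sigma\om}(H)\bigr)$ and $\bG_\om = F_{x_1}(\bG_{\sigma\om})$, the latter from the convention $\bG_{x\om} = F_x(\bG_\om)$ recorded in the remark after Theorem~\ref{t:grig}, where $\sigma\om$ denotes the left shift (still non-stabilizing). Given $w \in \cG$ of length $n \le \cff(m) = 2^m - 1$ with image $(\alpha; U, V) \in \zzz \ltimes (K \oplus K)$ inside $F_{x_1}(K)$, triviality of $w$ in $F_{x_1}(K)$ is equivalent to $\alpha = 1$ (a parity condition on the number of $a$'s in $w$, independent of~$K$) together with $U = V = 1$ in~$K$; the case $\alpha \ne 1$ requires no further argument, as then $w$ is automatically nontrivial in both target groups. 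Assuming $\alpha = 1$, the key input is the \emph{contraction property}: the words $U, V$, viewed as elements of $\cG$ after using the relations $a^2 = b^2 = c^2 = d^2 = bcd = 1$, have $\cG$-length at most $\lfloor n/2 \rfloor \le \lfloor (2^m - 1)/2 \rfloor = 2^{m-1} - 1 = \cff(m-1)$. The inductive hypothesis applied to $\sigma\om$ then converts each of $U = 1$ and $V = 1$ into equivalent conditions across the two choices $K = F^{m-1}_{\sigma\om}(H)$ and $K = \bG_{\sigma\om}$, yielding the desired coincidence of balls.

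The main obstacle will be establishing the contraction bound $\ell_\cG(U), \ell_\cG(V) \le \lfloor n/2 \rfloor$ for stabilizer words~$w$. Using $\langle b, c, d \rangle \cong (\zz/2)^2$ in $\cG$, one may first rewrite $w$ in reduced form $w = x_0 a x_1 a \cdots a x_k$ with each $x_i \in \{1, b, c, d\}$ and $x_i \ne 1$ for $0 < i < k$, so that $n = k + \sum_i |x_i|$ and the stabilizer condition forces $k$ to be even. A careful inspection of how the generators $A, B, C, D$ and their $\varphi^{x_1}$-twists (which merely permute $\{b,c,d\}$ and preserve length counts) distribute the contribution of each $x_i$ between the two coordinates of $K \oplus K$ — with the $a$'s acting as transpositions that alternate the destination coordinate — yields the bound after further $\cG$-cancellation. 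The parity constraint (that $k$ is even) is precisely what produces the factor of $1/2$ rather than a naive bound of order~$n$; without this constraint examples such as $w = (Ba)^k B$ show that $\ell_\cG(U)$ can grow as $(n+1)/2$, which would be too large by one for the induction to close.
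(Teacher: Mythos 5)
Your overall strategy matches the paper's: both proofs rely on the recursive identities $F^m_\om = F_{x_1}\circ F^{m-1}_{\sigma\om}$, $\bG_\om = F_{x_1}(\bG_{\sigma\om})$, on the observation that a nontrivial $\zzz$-part makes $w$ visibly nontrivial independently of the decorating group, and on a length-contraction estimate applied $m$ times. The paper phrases this as an iteration ending in words of length $\le 1$; you phrase it as an induction on $m$. These are the same argument.

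The gap is in the contraction bound. You claim $\ell_\cG(U),\ell_\cG(V)\le\lfloor n/2\rfloor$ for stabilizer words of length~$n$, and you attribute the saving of the ``$+1$'' to the parity constraint on the number of $a$'s. But the example you yourself introduce refutes this: take $w=(ba)^kb$ with $k$ \emph{even}. This \emph{is} a stabilizer word (even number of $a$'s), of length $n=2k+1$, and a direct computation gives $(1;\,(ab)^{k/2}a,\,(ba)^{k/2}b)$-type coordinates of length $k+1 = (n+1)/2 = \lceil n/2\rceil > \lfloor n/2\rfloor$. Concretely, $w=babab$ ($k=2$, $n=5$) projects to $(1;aba,bab)$ with $|U|=3>2=\lfloor 5/2\rfloor$. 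So the parity of the $a$-count does not yield $\lfloor n/2\rfloor$; the true generic bound is $\lceil n/2\rceil = \lfloor(n+1)/2\rfloor$, exactly as stated in the paper. With that correct bound your induction does not close at the boundary, since $\lceil\cff(m)/2\rceil = 2^{m-1} > 2^{m-1}-1 = \cff(m-1)$.

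The induction can nonetheless be repaired, but the repair requires an observation you do not make. Writing a stabilizer word as $x_0ax_1a\cdots ax_p$ with $p$ even, $n=p+k$, and $|k-p|\le 1$, one has $|U|\le\min(p+1,k)$, and the worst case $|U|=(n+1)/2$ occurs only when $k=p+1$, which forces $n\equiv 1\ (\mathrm{mod}\ 4)$. Since $\cff(m)=2^m-1\equiv 3\ (\mathrm{mod}\ 4)$ for $m\ge 2$, such $n$ are automatically $\le\cff(m)-2$, whence $(n+1)/2\le\cff(m-1)$; the other residue classes give $|U|\le\lfloor n/2\rfloor\le\cff(m-1)$ directly. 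This mod-$4$ case analysis (or, as the paper implicitly does, the observation that any reduced word of length~$2$ has exactly one $a$ and hence a nontrivial $\zzz$-part, so it is caught before it can evaluate in $H$) is the missing ingredient. As written, your stated contraction inequality is false, and the step ``the inductive hypothesis applied to $\sigma\om$ then converts\ldots'' relies on it, so the proposal does not prove the lemma without this additional combinatorial input.
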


\begin{proof}
It is enough to show that the set of words of length $2\cff(m)$
which are trivial in $F^m_\om(H)$ is the same as the ones which are
trivial in $\bG_\om$.

Observe that every word $w\in \cG$ can be expanded to $(\xi^{a_w}; w',w'') \in F(\bar H)$
where $w'$ and $w''$ are words of length $\leq (|w|+1) /2$. If $a_w \not =0$
then $w$ is not zero in $F(\bar{H})$, for any group $\bar{H}$.
Iterating this $m$ times, shows that any word $w$ of length $ < 2\cff(m)$
is either nontrivial in both $F_\om^m(H)$ and $\bG_\om$; or evaluates to
many words of length at most $1$ acting on the copies of $H$.
If one of these words in nontrivial then $w$ otherwise it is trivial.

Here we are using that the non-stabilization on $\om$ implies that
the elementals $a,b,c,d$ are nontrivial in $\bG_{\bar{\om}}$ for any
suffix $\bar{\om}$ of $\om$.
\end{proof}

{\small
\begin{remark}
One can show that a stronger result holds if $\om$ does not contain
$0^k$, $1^k$ and $2^k$ as subwords.  Indeed, then the balls is
$F^m_{\om}(H)$ and $\bG_{\om}$ of radius $\cff(m)$
are the same as the balls in $\bG_{\om, m+k+1}$. The last group groups is
of the form $F^m_{\om}(H')$ where $H' = F_{\om'}(\mathbf{1})$ where $\om'$
is a subword of $\om$ of length $k+1$ and the conation on $\om$
implies that the generators $a,b,c$ and $d$ are nontrivial in~$H'$.
\end{remark}

\begin{remark}
Here we use that the length of each word $w'$ and $w''$ is shorter than $w$.
In many cases one can also show that the sum of the lengths
(or some suitably defined norm) of these words is less that that of $w$.
Such contracting property is used to obtain upper bounds for the growth
of~$\bG_\om$, see~\cite{Bar1,BGS,Gri3,MP}.
%
\end{remark}
}

We conclude with an immediate corollary of the
Proposition~\ref{p:grig-rooted} and
Lemma~\ref{l:contracting_property_of_G},
which can also be found in~\cite{Gri5}.

\begin{cor}
Let $\{\cG \toto H_i\}$  be any sequence of 
groups generated
by $k=4$ nontrivial involutions and let $\{m_i\}$
be an increasing sequence. Then the sequence of groups $\{F^{m_i}_\omega(H_i)\}$
converge (in the Chabauty topology) to~$\bG_\omega$.
\end{cor}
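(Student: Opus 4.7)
The plan is to deduce this corollary directly from Lemma~\ref{l:contracting_property_of_G}, which has already done the main work by comparing balls in $F^m_\om(H)$ and $\bG_\om$ up to radius $\cff(m) = 2^m - 1$. The only additional ingredient needed is the elementary observation that $\cff(m_i) \to \infty$ as $i \to \infty$.

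First I would unfold the definition of Chabauty convergence: one must show that for each fixed radius $n \in \nn$, there exists $M = M(n)$ such that for all $i > M$, the ball of radius $n$ in $F^{m_i}_\om(H_i)$ coincides with the ball of radius $n$ in $\bG_\om$. Since $\{m_i\}$ is a strictly increasing sequence of positive integers, $m_i \to \infty$, and since $\cff(m) = 2^m - 1$ is strictly increasing with $\cff(m)\to\infty$, I can choose $M = M(n)$ large enough so that $\cff(m_i) = 2^{m_i}-1 \geq n$ whenever $i > M$.

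Next I would verify the hypotheses of Lemma~\ref{l:contracting_property_of_G}: by assumption each $H_i$ is a marked group generated by four nontrivial involutions $a,b,c,d$ with $bcd=1$, so each comes with an epimorphism $\pi_i: \cG \toto H_i$ as required. Applying the lemma with $H = H_i$ and $m = m_i$ then immediately yields that the ball of radius $n \leq \cff(m_i)$ in $F^{m_i}_\om(H_i)$ agrees with the corresponding ball in $\bG_\om$, which is precisely the Chabauty convergence condition.

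There is essentially no real obstacle here: the hard work lives in Lemma~\ref{l:contracting_property_of_G}, and the corollary is a clean repackaging. The one implicit point worth flagging is that the lemma requires $\om$ to be non-stabilizing, so the corollary is naturally stated under that hypothesis; in the stabilizing case $\bG_\om$ is virtually nilpotent and a separate (but easier) argument would be needed, which is outside the scope of the intended application.
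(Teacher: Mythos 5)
Your proposal is correct and is essentially the paper's own argument (the paper does not spell out a proof, labelling the statement ``an immediate corollary'' of Proposition~\ref{p:grig-rooted} and Lemma~\ref{l:contracting_property_of_G}, but the real engine is exactly the lemma, as you identified). Your reduction is exactly right: fix $n$, choose $M$ with $\cff(m_i) = 2^{m_i}-1 \geq n$ for $i > M$, and apply Lemma~\ref{l:contracting_property_of_G} level by level; the citation of Proposition~\ref{p:grig-rooted} in the paper is only there to supply the ambient picture of the rooted quotients $\bG_{\om,i}$, and does no further logical work. You are also right to flag the non-stabilizing hypothesis on $\om$: the lemma genuinely needs it (e.g.\ if $\om$ stabilizes at $0$ then $d$ becomes trivial in $\bG_{\bar\om}$ for a tail suffix $\bar\om$, while $d$ is nontrivial in each $H_i$, so the balls would not coincide), and the paper's convention throughout restricts to $\bG_\om$ of intermediate growth, which excludes the stabilizing case.
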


{\small
\begin{remark}
This can be used as an alternative definition of the groups $\bG_\omega$,
which shows that there exists a canonical epimorphism $\cG \toto \bG_\omega$.
\end{remark}
}

\smallskip

\subsection{Growth lemmas}
Let $r$ denote the element
\ts $[c,[d,[b,(ad)^4]]] \in \cG$ \ts  and let $r_x = \varphi^x(r)$
be its twists by the automorphism $\varphi$ described in Definition~\ref{d:twist}.

\begin{lemma}
\label{l:diam_of_F^i}
Let $\cG\toto H$ be a finite image of $\cG$ which
normally generated by element $r_{x_{k+1}}$ defined above.
Then the kernel of the map
$F^k_\omega(H) \toto  \bG_{\omega,k}$  induced by $F^k_\omega$ from the trivial
homomorphism $H\toto\mathbf{1}$, is isomorphic to $H^{\oplus 2^{k}}$.
Moreover, there exists a word $\eta_{\omega,k} \in F$ of length
$\le \rK\cdot 2^k$,
such that such the image of $\eta_{\omega,k}$ in $F^k_\omega(H)$
normally generated this kernel and $\eta_{\omega,k}$ is trivial in
$F^{k+1}_\om(H')$, for every $\cG\toto H'$.
\end{lemma}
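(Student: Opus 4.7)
The plan is to analyze $F^k_\om(H)$ via its embedding $F^k_\om(H)\hookrightarrow H\wr_{X_k}\bG_{\om,k}$ from Remark~\ref{r:inclusion_into_wreath_product}, with $|X_k|=2^k$. Under this embedding, the map $F^k_\om(H)\toto\bG_{\om,k}$ induced from the trivial homomorphism $H\toto\mathbf{1}$ is exactly the projection onto the top factor, so its kernel $K$ is automatically contained in the base $H^{X_k}\cong H^{\oplus 2^k}$. The content of the lemma is that equality $K=H^{\oplus 2^k}$ holds and that $K$ is normally generated inside $F^k_\om(H)$ by a short element $\bar\eta_{\om,k}$.

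My strategy is to construct $\eta_{\om,k}$ (a word in the free group on $a,b,c,d$) by induction on $k$ so that its image $\bar\eta_{\om,k}\in F^k_\om(H)$ lies in the base and is supported on a single distinguished leaf $\ell_0\in X_k$, acting there as the image of the normal generator $r_{x_{k+1}}$. Once this is accomplished, both claims of the lemma follow by a simultaneous normal-closure argument: $\bG_{\om,k}$ acts transitively on $X_k$, so lifts of $\bG_{\om,k}$-generators conjugate $\bar\eta_{\om,k}$ to an analogous element supported at any other leaf, while conjugation inside $F^k_\om(H)$ by the images of the $\cG$-generators $a,b,c,d$ acts, at each leaf, as the corresponding $\cG$-conjugation on the $H$-coordinate (a standard computation using the explicit wreath-product form of $A,B,C,D$). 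Since $r_{x_{k+1}}$ normally generates $H$ as a quotient of $\cG$, this recovers every element of $H$ at every leaf, forcing $K=H^{\oplus 2^k}$.

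For the construction itself I take $\eta_{\om,0}:=r_{x_1}$, a constant-length word of length $\rK_0=|r|$, as the base case. For the inductive step I would use the standard single-coordinate trick inside $F^k_\om(H)\ssu F^{k-1}_{\sigma\om}(H)\wr\zzz$: conjugating the image of $\eta_{\sigma\om,k-1}$ by $A$ and combining with auxiliary commutators involving $B,C,D$, chosen so that the unwanted coordinate of the wreath-product base cancels, produces a word of length at most $2|\eta_{\sigma\om,k-1}|+C$ whose image is supported on one wreath-product coordinate and equals $\bar\eta_{\sigma\om,k-1}$ there. Solving the recursion $|\eta_{\om,k}|\le 2|\eta_{\sigma\om,k-1}|+C$ then gives $|\eta_{\om,k}|\le \rK\cdot 2^k$ for a sufficiently large constant~$\rK$.

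Finally, the triviality of $\eta_{\om,k}$ in $F^{k+1}_\om(H')$ for any $\cG\toto H'$ falls out of the same recursion: the image of $\eta_{\om,k}$ in $F^{k+1}_\om(H')=F_{x_1}(\cdots F_{x_{k+1}}(H')\cdots)$ acts on the subtree beneath $\ell_0$ as the image of $r_{x_{k+1}}$ under the canonical map $\cG\to F_{x_{k+1}}(H')$, and a direct wreath-product calculation gives $(A_0D_0)^4=(1;1,1)$ in $F_0(H')$ (since $d^2=1$), whence $r\mapsto 1$ in $F_0(H')$; the $\varphi$-equivariance built into the definition of $F_x$ propagates this to $r_x\mapsto 1$ in $F_x(H')$ for every $x\in\{0,1,2\}$. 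I expect the main obstacle to be the inductive single-coordinate lift in the third paragraph: arranging it with only an additive constant of overhead (rather than a multiplicative one that would ruin the geometric-series bound) requires a careful choice of auxiliary commutators in each of the three cases $x_1\in\{0,1,2\}$ and uses the precise form of the $F_{x_1}$-generators crucially.
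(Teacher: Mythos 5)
Your overall framework—embed $F^k_\om(H)$ into $H\wr_{X_k}\bG_{\om,k}$, observe the kernel sits in the base, construct a short word supported at a single leaf, then use transitivity of $\bG_{\om,k}$ on $X_k$ together with normal generation of~$H$ by $r_{x_{k+1}}$—is the same as the paper's, and your treatment of the last claim (the $(ad)^4$ factor dies because $(A_0D_0)^4=1$ in $F_0$, propagated by $\varphi$-twisting) matches the paper exactly. The gap is where you yourself suspect it: the inductive ``single-coordinate lift.'' Conjugating $\bar\eta_{\sigma\om,k-1}$ by $A$ merely swaps the two base coordinates, and the natural commutator that kills one coordinate—e.g.\ taking $[\ts\cdot\ts,D]$ with $D=(1;1,d)$—produces $(1;1,[h,d])$ rather than $(1;1,h)$. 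Iterating therefore yields a nested commutator $[[\dots[r_{x_{k+1}},d_{1}],\dots],d_{k}]$ at the leaf, not $r_{x_{k+1}}$ itself, which breaks your stated inductive invariant (``equals $\bar\eta_{\sigma\om,k-1}$ there'') and, more seriously, means you no longer know the leaf-element normally generates $H$: the hypothesis is only that $r_{x_{k+1}}$ normally generates, and for $H=\PSL_2(\zz_N)$ with $N$ composite a nested commutator of $r$ with $d$'s could die in some simple factor. If instead you try to cancel the unwanted coordinate exactly (not up to a commutator), you face the fact that the stray coordinate is itself a word of length comparable to $|\eta_{k-1}|$, so cancelling it honestly would be multiplicative in cost, exactly the blow-up you need to avoid.

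The paper sidesteps all of this with a substitution rather than a commutator: define $\sigma(a)=aca$, $\sigma(b)=b$, $\sigma(c)=c$, $\sigma(d)=d$ and $\tau(a)=c$, $\tau(b)=\tau(c)=a$, $\tau(d)=1$; one checks directly from $A=(\xi;1,1)$, $B=(1;a,b)$, $C=(1;a,c)$, $D=(1;1,d)$ that the image of $\sigma(\eta)$ in $F(\bar H)$ is $(1;\tau(\eta),\eta)$. Then set $w_0=r_{x_{k+1}}$ and $w_{i+1}=\sigma_{x_{k-i}}(w_i)$, where $\sigma_x=\varphi^x\sigma\varphi^{-x}$. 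Because every $\sigma_x$ fixes $b,c,d$, each $w_i$ retains the outer form $[c,[d,[b,*]]]$, and $\tau$ kills any such word ($\tau(d)=1$ collapses the inner commutator), so the unwanted coordinate vanishes identically—no auxiliary commutators, no cancellation, and the leaf-value is exactly $r_{x_{k+1}}$, not a commutator of it. The length grows by at most a factor $2$ per level (reduced words in $\cG$ alternate $a$'s with $\{b,c,d\}$'s, and $\sigma$ triples only the $a$'s), giving $|\eta_{\om,k}|\le\rK\cdot 2^k$ honestly. You should replace your paragraph three with this substitution argument; the rest of your proof then goes through as written.
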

\begin{proof}
Consider the substitutions $\sigma,\tau$ (endomorphisms $\cG \to \cG$),
defined as follows:
\begin{itemize}
\item $\sigma(a) = aca$  and $\sigma(s) = s$, for $s\in \{b,c,d\}$\ts,
\item $\tau(a) = c$,  $\tau(b) = \tau(c) = a$ and $\tau(d) = 1$\ts.
\end{itemize}
It is easy to see that for any word $\eta$, the evaluation of $\sigma(\eta)$
in $F(H)$ is equal to
$$
\bigl(1;\tau(\eta),\eta\bigr) \, \in \, \{1\} \times H \times H \, \subset \, H \wr \zzz \ts.
$$

Define words $\{w_i\}$ for $i=0,\dots, k$ as follows:
$w_0 = r_{x_{k+1}}$ and $w_{i+1} = \sigma_{x_{k-i}}(w_{i})$
where $\sigma_{x_i} = \varphi^{x_i} \sigma \varphi^{-x_i}$
the the twist of the substitution $\sigma$.
Notice that all these words have the form $[c,[d,[b,*]]]$ because
$\sigma_{x_i}$ fixes $b$, $c$ and $d$. Therefore $\tau_x(w_i) =1$

By construction the word $\eta_{\om,k} = w_k$ evaluates in $F^k_\om(\bar{H})$
to $r_{x_{k+1}}$ in one of the copies of $\bar{H}$, for any group
$\bar{H}$.  The expression $(ad)^4$ inside $r$ ensures that $r_{x_{k+1}}$
is trivial if $\bar{H}$ is of the form $F_{x_{k+1}}(\bar{H}')$,
which proves the last claim.

The first claim follows form the transitivity of the action of $\bG_\om$
(and $F^k_\om(H)$) on the $m$-th level of the binary tree and the
assumption that $H$ is normally generated by $r_{x_{k+1}}$.
\end{proof}

{\small
\begin{remark}
The lemma says that if $H$ is normally generated by the element~$r$,
then the inclusion in Remark~\ref{r:inclusion_into_wreath_product}
is an equality.
\end{remark}
}

\smallskip

\begin{cor}
\label{cor:weak_lower_bounds_for_balls}
For~$H$ as in Lemma~\ref{l:diam_of_F^i} and every integer $n\ge 1$, we have:
$$
\ga_{H}(n) \. \le \. \ga_{F^k_\om(H)}(\ccc_k\ts n) \ts,
\quad \text{where} \ \  \ccc_k=2^{k+1}-1.
$$
\end{cor}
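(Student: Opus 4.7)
The plan is to construct an injective map $\psi \colon H \hookrightarrow F^k_\om(H)$ with length expansion at most $\ccc_k$, landing inside the kernel $H^{\oplus 2^k}$ provided by Lemma~\ref{l:diam_of_F^i}. Once such a $\psi$ is in hand, $|B_H(n)| \le |B_{F^k_\om(H)}(\ccc_k n)|$ is immediate, which is the claim.

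I would construct $\psi$ from the substitutions $\sigma, \tau$ and their twists $\sigma_x := \varphi^x \sigma \varphi^{-x}$ used in the proof of Lemma~\ref{l:diam_of_F^i}. Setting
$$
\Sigma_k \, := \, \sigma_{x_1} \circ \sigma_{x_2} \circ \cdots \circ \sigma_{x_k}
\colon \cG \longrightarrow \cG\ts,
$$
I would define $\psi(h)$ to be the image of $\Sigma_k(\eta)$ in $F^k_\om(H)$, where $\eta \in \cG$ is any shortest lift of $h$ (this satisfies $|\eta|_\cG = |h|_H$, because $H$ is a quotient of $\cG$ so any $H$-geodesic word for $h$ is automatically geodesic in $\cG$). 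Injectivity of $\psi$ follows by iterating the key identity from the proof of Lemma~\ref{l:diam_of_F^i}: for any quotient $G$ of $\cG$ and any $\eta' \in \cG$, the element $\sigma_x(\eta')$ evaluates in $F_x(G)$ to $(1;\tau_x(\eta'),\eta') \in G \wr \zzz$. Unwinding this identity step by step through the tower $F^k_\om(H) = F_{x_1}(F^{k-1}_{\om'}(H))$ with $\om'=(x_2,x_3,\dots)$ will show that $\Sigma_k(\eta)$ lies in $H^{\oplus 2^k}$ and projects onto one distinguished $H$-factor as $\eta$ (viewed in $H$); hence distinct values of $h$ give distinct values of $\psi(h)$.

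The length bound is a short direct computation. Since $\sigma$ fixes $b, c, d$ and sends $a \mapsto aca$, and $\varphi$ permutes $\{b,c,d\}$ while fixing $a$, each twist $\sigma_{x_i}$ fixes $b, c, d$ and sends $a$ to a length-three word $a \cdot \varphi^{x_i}(c) \cdot a$ containing exactly two $a$'s. Writing $n_a(\eta)$ for the number of $a$'s in $\eta$, this gives $|\sigma_{x_i}(\eta)| = |\eta| + 2\ts n_a(\eta)$ and $n_a(\sigma_{x_i}(\eta)) = 2\ts n_a(\eta)$, and unrolling $k$ times yields
$$
|\Sigma_k(\eta)|_\cG \. = \. |\eta| + (2^{k+1} - 2)\ts n_a(\eta)
\. \le \. (2^{k+1} - 1)\ts|\eta| \. = \. \ccc_k\ts|\eta|\ts.
$$
Since $F^k_\om(H)$ is a quotient of $\cG$, word lengths can only decrease, so $|\psi(h)|_{F^k_\om(H)} \le \ccc_k |h|_H$, completing the proof. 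The only real piece of bookkeeping is making sure the twists $\varphi^{x_i}$ line up correctly when iterating the identity $\sigma_x(\eta') \equiv (1;\tau_x(\eta'),\eta')$ up the recursive tower, but this is routine once the twisted analogue of the formula has been verified at a single level.
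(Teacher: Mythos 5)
Your argument matches the paper's (very terse) proof: both apply the composition $\Sigma_k = \sigma_{x_1}\circ\cdots\circ\sigma_{x_k}$ to lifts of elements of $H$, bound the word-length expansion by $\ccc_k = 2^{k+1}-1$, and get injectivity from the iterated wreath-product identity that keeps $\pi(\eta)=h$ visible as the rightmost leaf coordinate of the evaluation in $F^k_\om(H)$. Your length computation ($|\Sigma_k(\eta)|=|\eta|+(2^{k+1}-2)\ts n_a(\eta)\le\ccc_k|\eta|$) is correct, and the observation that an $H$-geodesic word is automatically a geodesic word in $\cG$ is fine since $\cG\cong\zz_2\ast(\zz_2\times\zz_2)$, so reduced words are geodesic.

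One inaccuracy, harmless for your argument: $\Sigma_k(\eta)$ does \emph{not} in general land in the kernel $H^{\oplus 2^k}$ of $F^k_\om(H)\toto\bG_{\om,k}$. For example $\Sigma_k(b)=b$ (since every $\sigma_{x_j}$ fixes $b$, $c$, $d$), and $b$ has a nontrivial image in $\bG_{\om,k}$ — it moves points at lower levels of the tree. The left coordinate $\tau_{x_1}(w)$ produced at the top level need not evaluate trivially in $F^{k-1}_{\om'}(H)$ for a general lift $\eta$. The specific element $w_0=r_{x_{k+1}}$ in Lemma~\ref{l:diam_of_F^i} does land in the kernel precisely because its nested-commutator form $[c,[d,[b,\ast]]]$ forces $\tau_x(w_i)=1$ at every step; this is a special feature of that $w_0$, not of arbitrary $\eta$. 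Fortunately your injectivity argument does not use this claim — it only needs that the rightmost coordinate recovers $h$, which is correct — so the proof goes through as written.
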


\begin{proof}
Use that $\sigma_{k,\om}(B_{H}(n)) \subset B_{F^k_\om(S)}(\ccc_k\ts n)$
because the composition $\sigma_{k,\om}$ of $k$ substitutions
$\sigma_{x_j}$ increases the lengths of the words at most~$2^{k+1}-1$
times.
\end{proof}


\begin{cor}
\label{cor:lower_bounds_for_balls}
For~$H$ as in Lemma~\ref{l:diam_of_F^i} and every integer
$n\ge 1$ and any $t <  2^k$,
we have:
$$
\wt\ga_{H,r_{x_{k+1}}}(n)^t \. \le \. \ga_{F^k_\om(H)}(\ccc_k\ts t \ts n) \ts,
$$
where $\wt \ga_H(n)$ is the \emph{normal growth function}, i.e.~the number
of elements in $H$ which can be expressed as words of length less then~$n$
in the normal subgroup~$X = \<r_{x_{k+1}}\>^{\F_4}$ of the free
group~{\rm $\ts \F_4=\<a,b,c,d\>$.}
\end{cor}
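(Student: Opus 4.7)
My plan is to extend the embedding used in Corollary~\ref{cor:weak_lower_bounds_for_balls} by exploiting the kernel subgroup $H^{\oplus 2^k}\trianglelefteq F^k_\om(H)$ from Lemma~\ref{l:diam_of_F^i}: instead of embedding a single copy of the ball of $X$-words of length $\le n$ into a single coordinate of this kernel, I will embed $t$ independent copies into $t$ distinct coordinates and then multiply them in $\F_4$, using that $t<2^k=|X_k|$.

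First, I would track the effect of the substitution $\sigma_{k,\om}$ on the normal subgroup $X=\la r_{x_{k+1}}\ra^{\F_4}$. The one-step substitution $\sigma$, with $\sigma(a)=aca$ and $\sigma(x)=x$ for $x\in\{b,c,d\}$, evaluates in $F(H)$ to $\sigma(u)=(1;\tau(u),u)$, where $\tau:\F_4\to\F_4$ is the substitution $a\mapsto c$, $b\mapsto a$, $c\mapsto a$, $d\mapsto 1$, which descends to an endomorphism of $\cG$. Because $\tau(d)=1$ and $r=[c,[d,[b,(ad)^4]]]$ contains the inner bracket $[d,\cdot]$ that collapses via $[1,\cdot]=1$, a direct check gives $\tau(r_x)=1$ in $\cG$ for all $x\in\{0,1,2\}$; hence each twist $\vp^{x_i}\tau\vp^{-x_i}$ sends $X$ into $\ker(\F_4\to\cG)$. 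Consequently, for $w\in X\cap B_{\F_4}(n)$ the word $\sigma_{k,\om}(w)\in\F_4$ has length at most $\ccc_k n$ and evaluates in $F^k_\om(H)$ to $w_H$ supported on a single fixed ``base'' leaf $v_0\in X_k$ of the kernel $H^{\oplus 2^k}$.

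Next, to reach any other leaf $v\in X_k$, I would introduce a \emph{mirror} endomorphism $\sigma^L$ of $\cG$, defined for instance by $a\mapsto c$, $b\mapsto aba$, $c\mapsto aca$, $d\mapsto ada$, which evaluates in $F(H)$ to $\sigma^L(u)=(1;u,\tau(u))$, interchanging the roles of the two child coordinates. One verifies directly that $\sigma^L$ preserves all relations $a^2=b^2=c^2=d^2=bcd=1$, and has the same $2$-fold length blow-up on alternating words as $\sigma$. Making an independent left/right choice between $\sigma_{x_i}$ and its mirror $\sigma^L_{x_i}$ at each of the $k$ iterated levels then produces $2^k$ \emph{branch-directed} compositions $\sigma^{(v)}_{k,\om}:\F_4\to\F_4$, one per leaf $v\in X_k$, each of length blow-up $\le\ccc_k$ and each sending $w\in X\cap B_{\F_4}(n)$ to an element of $F^k_\om(H)$ supported on coordinate $v$ of $H^{\oplus 2^k}$ with value $w_H$.

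Finally, given $w_1,\ldots,w_t\in X\cap B_{\F_4}(n)$ and $t<2^k$ distinct leaves $v_1,\ldots,v_t\in X_k$, the product $\gamma=\prod_{l=1}^{t}\sigma^{(v_l)}_{k,\om}(w_l)\in\F_4$ has length at most $\ccc_k t n$; its evaluation in $F^k_\om(H)$ lies in $H^{\oplus 2^k}$, with coordinate $v_l$ equal to $(w_l)_H$ for each $l$ and trivial elsewhere, since the $t$ evaluated factors commute (being supported on pairwise disjoint coordinates of the direct sum). Distinct $t$-tuples $(h_1,\ldots,h_t)\in \wt\ga_{H,r_{x_{k+1}}}(n)^t$ yield distinct such $\gamma$, all lying in $B_{F^k_\om(H)}(\ccc_k t n)$, which gives the desired inequality. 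The main obstacle is the single-coordinate support claim for $\sigma^{(v)}_{k,\om}$ after $k$ iterations; this is an inductive bookkeeping through the wreath-product layers of $F(F(\cdots F(H)))$ and ultimately rests on the identity $\tau(X)=1$ in $\cG$ established above, together with the fact that both $\sigma$ and $\sigma^L$ preserve alternating-word structure and share the same length bound.
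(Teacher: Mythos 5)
Your proposal is correct in substance, and it realizes precisely what the paper's one-line hint ("As before, but use the fact that there are many copies of~$H$") is gesturing at: there are $2^k$ coordinates in the kernel $H^{\oplus 2^k}$, and one should pack $t$ independent elements of the normal ball of $H$ into $t$ distinct coordinates. The construction you supply to reach an arbitrary coordinate — the "mirror" endomorphism $\sigma^L$ with $a\mapsto c$, $b\mapsto aba$, $c\mapsto aca$, $d\mapsto ada$, which evaluates to $(1;u,\tau(u))$ and hence routes the content to the \emph{left} child, combined with the observation that $\tau_x$ kills the normal closure of each $r_y$ because some inner bracket $[\varphi^x(d),\,\cdot\,]$ collapses — is not spelled out in the paper, but it is a clean realization of the intended idea and yields exactly the length bound $\ccc_k t n$, since both $\sigma$ and $\sigma^L$ have the same $\le 2$-fold blow-up on alternating words and the twisted $\tau_x$ still annihilates $\langle r_{x_{k+1}}\rangle^{\F_4}$ at every level (one also uses $\tau_x(a)^2=1$ in $\cG$ to kill the conjugated brackets coming from $\sigma^L$). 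An alternative reading of the paper's hint would be to conjugate a single $\sigma_{k,\om}(w_l)$ by short group elements moving the base leaf to $v_l$, but those conjugators have length on the order of $2^k$ and therefore do not recover the stated constant $\ccc_k$ for small $n$; your branch-directed substitutions are strictly better on this point and are the natural way to get the clean bound. One minor bookkeeping item you gloss over and should state explicitly if writing this up in full: the claim that $\tau_{x_i}$ annihilates the image of $X$ after $j$ intermediate substitutions ($i<j\le k$, each either $\sigma_{x_j}$ or $\sigma^L_{x_j}$) needs the observation that all these substitutions send each of $b,c,d$ to either itself or an $a$-conjugate of itself, so the nested commutator structure of $r_y$ is preserved up to conjugation, and $\tau_{x_i}$ always kills one of the three letters; you assert the conclusion but the inductive step is what makes the single-leaf support go through.
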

\begin{proof}
As before, but use the fact that there are many copies of~$H$.
\end{proof}

\smallskip

\section{Growth in $\PSL_2(\zz_N)$}
\label{s:technical}

For the proof of Theorem~\ref{t:main}, we need the following
technical result:

\begin{lemma}
\label{l:gen_set_in_some_groups}
\label{gen_set_in some_groups}
Let $N$ such that $-1$ is a square in $\zz_N$ and $2 \not\! | N$, i.e.,
the only prime factors which appear in the prime decomposition of $N$
are of the form $p=1 \mod 4$.
Then there exist a generating set $S_N=\{a,b,c,d\}$ of the group $H_N=\PSL_2(\zz_N)$
such that
\begin{enum1}
\item
\label{l:gen_set_in_some_groups:con:marked}
there is an epimorphism of marked groups
$\cG \toto H_N=\PSL_2(\zz_N)\ts$,
\item
\label{l:gen_set_in_some_groups:con:simple}
the group $H_N$ is normally generated
by the image of element \ts $r=[c,[d,[b,(ad)^4]]]$\ts,
\item
\label{l:gen_set_in_some_groups:con:exp}
$\ga_{H_N}(n) > \exp(n/\rK)$,
for $n < \rK\log |H_N| < 3 \rK \log N$, and $\rK >0$ is an absolute constant,
\item
\label{l:gen_set_in_some_groups:con:strong_exp}
$\wt \ga_{H_N,r}(n) > \exp(n/\rK)$,
for $n < \rK'\log |H_N| < 3 \rK' \log N$, and $\rK' >0$ is an absolute constant.
\end{enum1}
\end{lemma}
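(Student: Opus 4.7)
I proceed in four stages.

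\emph{Generating set.} The arithmetic hypothesis on $N$ gives, via CRT, a square root $i\in\zz_N$ of~$-1$. I use it to build a Klein four-subgroup of $\PSL_2(\zz_N)$ from the classes modulo scalars of
\[
b \,=\, \begin{pmatrix} 0 & 1\\ -1 & 0\end{pmatrix},
\qquad
c \,=\, \begin{pmatrix} i & 0\\ 0 & -i\end{pmatrix},
\qquad
d \,=\, bc;
\]
these all square to $-I$ in $\SL_2$ and pairwise anti-commute there, so in $\PSL_2(\zz_N)$ they are pairwise commuting involutions satisfying $bcd=1$. For the fourth generator I take an involution $a$ in general position, e.g.\ the class of $\bigl(\begin{smallmatrix}1 & 1\\ -2 & -1\end{smallmatrix}\bigr)$, which also squares to~$-I$. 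Since $\{a,b,c,d\}$ satisfy the defining relations of~$\cG$, the universal property of the free Grigorchuk group yields the marked epimorphism required by~(\ref{l:gen_set_in_some_groups:con:marked}).

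\emph{Generation and normal generation.} Split $H_N=\prod_p \PSL_2(\zz_{p^{e_p}})$ by CRT; each factor is (quasi-)simple. Generation of each factor by the images of $\{a,b,c,d\}$ follows from Dickson's classification of subgroups of $\PSL_2(\zz_p)$: our Klein four-group is not contained in any Borel, dihedral, or exceptional maximal subgroup, and the generic involution~$a$ rules out the remaining possibilities (a sporadic list of small primes may require a minor adjustment of~$a$). Goursat's lemma then upgrades this to generation of the product. For~(\ref{l:gen_set_in_some_groups:con:simple}), simplicity reduces the problem to checking that the image of $r=[c,[d,[b,(ad)^4]]]$ is non-trivial in every factor $\PSL_2(\zz_p)$, which is a direct matrix computation, non-degenerate for generic~$a$.

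\emph{Ordinary exponential growth.} Claim~(\ref{l:gen_set_in_some_groups:con:exp}) follows from the uniform expansion of the Cayley graphs $\cayley(\PSL_2(\zz_N),\{a,b,c,d\})$: Selberg's $3/16$ theorem (for prime~$N$, applied to congruence quotients of an arithmetic lattice in $\PSL_2(\R)$) and the Bourgain--Gamburd--Varj\'u machinery (for general~$N$) supply a uniform spectral gap, which converts to $\ga_{H_N}(n)\ge (1+\epsilon_0)^n$ throughout the range $n\le c\log|H_N|$.

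\emph{Normal exponential growth --- the main obstacle.} The hardest part is~(\ref{l:gen_set_in_some_groups:con:strong_exp}). My plan: fix a small constant~$m_0$ and consider
\[
T \,=\, \{\, u\, r\, u^{-1} : u \in B_{\F_4}(m_0)\,\} \,\subset\, \<r\>^{\F_4}\cap B_{\F_4}(2m_0+|r|).
\]
Any $k$-fold product of elements of $T\cup T^{-1}$ is a word in $\<r\>^{\F_4}$ of $\F_4$-length at most $k(2m_0+|r|)$, so, writing $\pi$ for the projection $\F_4\toto H_N$ and setting $n = k(2m_0+|r|)$,
\[
\wt\ga_{H_N,r}(n) \,\ge\, \#\{\pi(w_1\cdots w_k) : w_1,\dots,w_k\in T\cup T^{-1}\}.
\]
Stage~2 ensures that $\pi(T)$ is a generating set of $H_N$; exponential growth of its Cayley balls, which would suffice, once again follows from a uniform spectral gap, now applied to~$T$. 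The technical obstacle is that Bourgain--Gamburd expansion is usually stated for a \emph{fixed} generating set, so I must extend it to the conjugated set~$T$; this is supplied by the Helfgott--Pyber--Szab\'o product theorem for $\PSL_2(\zz_p)$ combined with Bourgain--Gamburd bootstrapping, provided $\pi(T)$ escapes every proper subgroup at each prime factor --- a condition guaranteed by Stage~2. Collecting the constants yields $\wt\ga_{H_N,r}(n)\ge\exp(n/\rK')$ for $n\le\rK'\log|H_N|$, with an absolute constant~$\rK'$.
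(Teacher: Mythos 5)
Your plan identifies the right circle of ideas (explicit marked generators, Dickson/Goursat for generation, spectral gap for exponential growth, an expansion statement for a small conjugate set for the normal growth), but it misses the specific device that makes the paper's proof go through, and this matters for parts~(3) and~(4).

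The paper does not try to establish uniform expansion for an arbitrary ordered quadruple of involutions. Instead it chooses $a$ with great care: with
\[
a = \begin{pmatrix} \ii & \ii/4 \\ 0 & -\ii \end{pmatrix},
\qquad
d = \begin{pmatrix} \ii & 0 \\ 0 & -\ii \end{pmatrix},
\]
one gets $(ad)^4 = \bigl(\begin{smallmatrix}1 & -1\\ 0 & 1\end{smallmatrix}\bigr)$ and $c(ad)^4c = \bigl(\begin{smallmatrix}1 & 0\\ 1 & 1\end{smallmatrix}\bigr)$, i.e.\ the \emph{standard} unipotent generators of $\PSL_2(\zz_N)$ appear as words of bounded length ($\le 10$) in $\{a,b,c,d\}$. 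That single algebraic identity reduces~(3) to the classical expansion of $\PSL_2(\zz_N)$ with respect to elementary matrices, no Bianchi-group or general-generating-set expansion needed. Your choice $a = \bigl(\begin{smallmatrix}1 & 1\\ -2 & -1\end{smallmatrix}\bigr)$ does not have this feature: with your matrices $(ad)^2 = \bigl(\begin{smallmatrix}0 & 1\\ -1 & -3\end{smallmatrix}\bigr)$ has trace $-3$ and is not unipotent, so you have lost the reduction. Your appeal to ``Selberg's $3/16$ theorem for an arithmetic lattice in $\PSL_2(\R)$'' does not literally apply either, since your generating matrices have Gaussian-integer entries and do not lift to $\PSL_2(\zz)$; one would instead be in the Bianchi-group/$S$-arithmetic world, which is genuinely more delicate and not what Selberg's theorem gives. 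So as written, Stage~3 has a gap: you must either engineer the generating set so that the standard unipotents are short words (as the paper does), or actually carry out a Bourgain--Gamburd--Varj\'u style argument for your specific generators, which is substantial extra work.

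For~(4), the paper takes the pair $\{r,r^a\}$ directly: since $r$ and $r^a$ generate a non-solvable (hence Zariski-dense) subgroup of $\PSL_2(\zz)$, Bourgain--Gamburd expansion applies to their images, and the normal-closure growth bound follows because words in $\{r^{\pm1},(r^a)^{\pm1}\}$ of length $k$ are words of length $O(k)$ in $\{a,b,c,d\}$ lying in $\la r\ra^{\F_4}$. Your $T$-based variant is in the same spirit but your claim that ``$\pi(T)$ escapes every proper subgroup... a condition guaranteed by Stage~2'' does not follow: Stage~2 gives normal generation by $r$, which is not the same as $\pi(T)$ (for a fixed small $m_0$) generating $H_N$; you would still need a Zariski-density/strong-approximation argument for $\la r, r^a, \ldots\ra$, which is exactly the cleaner fact the paper uses. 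In short, the architecture of your plan is right, but the missing unipotent trick in Stage~3 is a real gap, and Stage~4's justification of generation needs to be replaced by the paper's non-solvability observation.
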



Here property~(\ref{l:gen_set_in_some_groups:con:exp})
means that the size of balls in the Cayley graphs
of $\PSL_2(\zz_N)$ grow exponentially.
For the proof of Corollary~\ref{cor:osc-growth}
we  do not really need the exact form of these groups nor property~(\ref{l:gen_set_in_some_groups:con:strong_exp}),
only the fact the their sizes go to infinity.
However the proof of Theorem~\ref{t:main} uses that these groups
are related to $\PSL_2(\zz)$.
\begin{proof}
Consider the following matrices in $\PSL_2\bigl(\zz[\ii,1/2]\bigr)$,  where $\ii^2=-1$,
$$
a = \left(\begin{array}{cc} \ii & \ii/4 \\ 0   & -\ii  \end{array}\right),
\quad \
b = \left(\begin{array}{cc}   0 & \ii   \\ \ii & 0   \end{array}\right),
\quad \
c = \left(\begin{array}{cc}   0 & 1     \\  -1 & 0    \end{array}\right),
\quad \
d = \left(\begin{array}{cc} \ii & 0     \\  0  & -\ii \end{array}\right).
$$
A direct computation shows that these elements are of order~$2$ and
$bcd=1$, i.e., there is a (non-surjective%
\footnote{The images contains $\PSL_2\bigl(\zz[1/2]\bigr)$
as a subgroup of index $2$.}%
) homomorphism
$\cG \to \PSL_2\bigl(\zz[\ii,1/2]\bigr)$.
Moreover, we have
$$
(ad)^4 = \left(\begin{array}{cc}1 & -1 \\ 0 & 1\end{array}\right),
\qquad
[c,[d,[b,(ad)^4]]] = \left(\begin{array}{cc} -1 & 2 \\ 2 & - 5\end{array}\right).
$$
This implies that the image of $\{a,b,c,d\}$ in $\PSL_2(\zz_N)$
satisfies properties~(\ref{l:gen_set_in_some_groups:con:marked})
and~(\ref{l:gen_set_in_some_groups:con:simple}),
because~$r$ is not contained in any proper finite index normal subgroup of the image.

Property~(\ref{l:gen_set_in_some_groups:con:exp})
is satisfied because the standard expander generators of $\PSL_2(\zz_N)$
can be expressed as short words in the generators (see e.g.~\cite{HLW,Lub-book})\.:
$$
\left(\begin{array}{cc}1 & 1 \\ 0 & 1\end{array}\right) = \ts (da)^4
\quad \ \text{and} \ \ \,
\left(\begin{array}{cc}1 & 0 \\ 1 & 1\end{array}\right) = \ts c\ts (ad)^4c\ts.
$$

Property~(\ref{l:gen_set_in_some_groups:con:strong_exp})
follows since $r$ and $r^a$ generate
a non-solvable subgroup of $\PSL_2(\zz)$.  Now the recent expansion
results~\cite{BG1}, imply that the Cayley graphs of
$\PSL_2(\zz_N)$ with respect to the images of $r$ and $r^a$
are expanders.
This completes the proof (cf.~Subsection~\ref{ss:fin-exp}).
\end{proof}

{\small
\begin{remark}
Twisting by the automorphism $\varphi^x$ one sees that the lemma remains valid
if $r$ is replaced by~$r_x$.
\end{remark}

\begin{remark}
\label{r:product}
If $\{p_j\}$ is a finite sequence of different primes which are
$3\mod 4$ then the product
of $\PSL_2(p_j)$ as marked groups (with resect to the generating sets constructed above) is
$$
\bp \PSL_2(p_j) = \PSL_2\left(\zz_N\right)
\quad \mbox{where }
N= \prod p_i.
$$
\end{remark}
}

The following is an immediate consequence of
Corollaries~\ref{cor:weak_lower_bounds_for_balls} and~\ref{cor:lower_bounds_for_balls}:

\begin{cor}
\label{cor:weka_size_of_balls}
\label{weak_size_of_balls}
Let $N$ satisfies the conditions of Lemma~\ref{gen_set_in some_groups}.
The size of a ball of radius $n < \ddd_i \log |\PSL_2(\zz_N)|$ inside
$F^i_\om(\PSL_2(\zz_N))$ is more than $e^{n/\ddd_i}$ where $\ddd_i = 2^{i+1}\ts\rK$.
\end{cor}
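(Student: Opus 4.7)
The plan is to directly chain Lemma~\ref{l:gen_set_in_some_groups}(3) with Corollary~\ref{cor:weak_lower_bounds_for_balls}. First I would set $H = \PSL_2(\zz_N)$ and $k = i$ in Corollary~\ref{cor:weak_lower_bounds_for_balls}, obtaining the transfer estimate
$$
\ga_{F^i_\om(H)}(\ccc_i\ts n) \. \ge \. \ga_H(n) \qquad \text{for every } \ n \ge 1, \ \ \ccc_i = 2^{i+1}-1.
$$
Feeding in $\ga_H(n) > \exp(n/\rK)$ from Lemma~\ref{l:gen_set_in_some_groups}(3), which is valid for $n < \rK \log |H|$, and substituting $m = \ccc_i n$ then yields
$$
\ga_{F^i_\om(H)}(m) \. > \. \exp\!\bigl(m/(\ccc_i\rK)\bigr) \. \ge \. \exp(m/\ddd_i),
$$
valid on the range $m < \ccc_i\rK \log |H|$, where the last inequality uses $\ccc_i\rK \le 2^{i+1}\rK = \ddd_i$.

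To cover the remaining sliver $\ccc_i\rK\log|H| \le m < \ddd_i\log|H|$ of the stated range, I would use monotonicity of $\ga_{F^i_\om(H)}$ together with the fact that $\ga_H(n)$ saturates at $|H|$ once $n$ reaches $\rK\log|H|$. Applying the transfer estimate at $n \approx \rK \log|H|$ gives $\ga_{F^i_\om(H)}(m) \ge \ga_H(\lfloor m/\ccc_i\rfloor) \ge |H| = \exp(\log |H|) > \exp(m/\ddd_i)$, the last inequality holding precisely because $m < \ddd_i \log |H|$.

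There is no genuine difficulty here --- the corollary is stated as an immediate consequence, and it really is: the argument is a one-line substitution of the exponential bound in $\PSL_2(\zz_N)$ into the generic transfer estimate, plus a trivial monotonicity patch. The only bookkeeping concerns converting the natural constant $\ccc_i\rK = (2^{i+1}-1)\rK$ into the cleaner $\ddd_i = 2^{i+1}\rK$, which provides a mild slack factor $2^{i+1}/(2^{i+1}-1)$ comfortably absorbing any rounding losses when $m/\ccc_i$ is not an integer. The companion Corollary~\ref{cor:lower_bounds_for_balls}, also cited by the authors, is not logically needed for the bound as stated; it would become relevant only for a sharper estimate that exploits the $2^i$ diagonal copies of $H$ sitting inside $\ker\bigl(F^i_\om(H) \toto \bG_{\om,i}\bigr)$.
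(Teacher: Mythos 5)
Your proposal is correct and follows the paper's intended route: the authors present the corollary as an immediate consequence of Corollary~\ref{cor:weak_lower_bounds_for_balls} combined with the exponential ball growth from Lemma~\ref{l:gen_set_in_some_groups}(\ref{l:gen_set_in_some_groups:con:exp}), and this is precisely the chain you run, with the slack between $\ccc_i\rK$ and $\ddd_i = 2^{i+1}\rK$ absorbing the flooring losses and the implicit logarithmic diameter of the expander (on which the lemma's proof rests) handling the top sliver of the stated range. Your observation that Corollary~\ref{cor:lower_bounds_for_balls} is not logically needed here, being used only for the sharper companion Corollary~\ref{cor:size_of_balls} tracking the intersection with $\ker\bigl(F^i_\om(H) \to \bG_{\om,i}\bigr)$, is also correct.
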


\begin{cor}
\label{cor:size_of_balls}
\label{size_of_balls}
Let $N$ satisfies the conditions of Lemma~\ref{gen_set_in some_groups}.
The size of the intersection of a  ball of radius
$n < 2^i\ddd_i' \log |\PSL_2(\zz_N)|$ inside $F^i_\om(\PSL_2(\zz_N))$, with the subgroup
$\PSL_2(\zz_N)^{2^i}= \ker\{F^i_\om(\PSL_2(\zz_N)) \to \bG_{\om,i}\}$
is more than $e^{n/\ddd_i'}$ where $\ddd_i' = 2^{i+1}\ts\rK'$.
\end{cor}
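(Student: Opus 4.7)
The strategy is to apply Corollary~\ref{cor:lower_bounds_for_balls} with $H = \PSL_2(\zz_N)$ and $k=i$, feeding in the exponential normal-growth bound supplied by Lemma~\ref{l:gen_set_in_some_groups}(\ref{l:gen_set_in_some_groups:con:strong_exp}). Once one verifies that the elements produced by that construction actually land in the kernel, the result reduces to a book-keeping check on the constants $\ccc_i = 2^{i+1}-1$ and $\ddd_i' = 2^{i+1}\rK'$.

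Given a target radius $n < 2^i \ddd_i' \log|\PSL_2(\zz_N)|$, I would set $t = 2^i-1$ (the largest value allowed in Corollary~\ref{cor:lower_bounds_for_balls}) and choose $m = \lfloor n/(\ccc_i t)\rfloor$, so that $\ccc_i t m \le n$ and the ball of radius $\ccc_i t m$ in $F^i_\om(H)$ is contained in the ball of radius $n$. By Corollary~\ref{cor:lower_bounds_for_balls} this ball contains at least $\wt\ga_{H,r_{x_{i+1}}}(m)^t$ elements.

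The key observation — which makes the corollary a true statement about the \emph{intersection} with the kernel, rather than merely about the ball — is that the construction behind Corollary~\ref{cor:lower_bounds_for_balls} uses the substitutions $\sigma_{x_j}$ from Lemma~\ref{l:diam_of_F^i} to place $t$ independent words from the normal closure $\la r_{x_{i+1}}\ra^{\F_4}$ into $t$ distinct copies of $H$ among the $2^i$ copies that make up $\ker\bigl(F^i_\om(H)\toto \bG_{\om,i}\bigr)\cong H^{\oplus 2^i}$. Hence all $\wt\ga_{H,r_{x_{i+1}}}(m)^t$ elements produced lie in $H^{\oplus 2^i}$.

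It remains to compare exponents. By Lemma~\ref{l:gen_set_in_some_groups}(\ref{l:gen_set_in_some_groups:con:strong_exp}), $\wt\ga_{H,r_{x_{i+1}}}(m) > e^{m/\rK'}$ as long as $m < \rK'\log|H|$, and with the choice of $m$ above one has $m \le n/(\ccc_i t) \le 2^i\ddd_i'\log|H|/\bigl((2^{i+1}-1)(2^i-1)\bigr) \lesssim \rK'\log|H|$, so the hypothesis is satisfied. Finally, $\ccc_i t m \le n$ together with $\ccc_i < 2^{i+1}$ yields $tm \ge n/2^{i+1}$, whence
$$
\wt\ga_{H,r_{x_{i+1}}}(m)^t \. \ge \. e^{tm/\rK'} \. \ge \. e^{n/(2^{i+1}\rK')} \. = \. e^{n/\ddd_i'},
$$
as required. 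The only delicate point is the constant match between $\ccc_i = 2^{i+1}-1$, $t = 2^i-1$ and $\ddd_i' = 2^{i+1}\rK'$; once this is checked, the rest is a direct invocation of the two previous results.
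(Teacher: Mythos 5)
The paper gives no proof for this corollary, declaring it an ``immediate consequence'' of Corollaries~\ref{cor:weak_lower_bounds_for_balls} and~\ref{cor:lower_bounds_for_balls}, so your task is really to flesh out the derivation — and you use exactly the intended ingredients. Your key observation, that the $t$ words produced by the $\sigma_{x_j}$-substitutions land in $t$ distinct $H$-factors inside $\ker\bigl(F^i_\om(H)\toto\bG_{\om,i}\bigr)$ because $\tau_{x_j}$ annihilates the normal closure of $r_{x_{i+1}}$, is the right reading of Lemma~\ref{l:diam_of_F^i} and is what upgrades Corollary~\ref{cor:lower_bounds_for_balls} from a ball estimate to a kernel-intersection estimate. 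So the route is the paper's route.

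There is, however, a genuine logical slip in your final step. You write that ``$\ccc_i\ts t\ts m\le n$ together with $\ccc_i<2^{i+1}$ yields $tm\ge n/2^{i+1}$.'' That does not follow: $\ccc_i\ts t\ts m\le n$ is an \emph{upper} bound on $tm$, and $\ccc_i<2^{i+1}$ makes $n/\ccc_i$ \emph{larger} than $n/2^{i+1}$, so both inequalities push in the wrong direction. What you actually need is the other side of the floor, $m>\frac{n}{\ccc_i\ts t}-1$, which gives $tm>\frac{n}{\ccc_i}-t$. Since $2^{i+1}-\ccc_i=1$, one gets $\frac{n}{\ccc_i}-t\ge\frac{n}{2^{i+1}}$ only when $n\ge t\ts\ccc_i\ts 2^{i+1}\asymp 2^{3i+2}$. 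Similarly, your ``$\lesssim$'' in the verification of $m<\rK'\log|H|$ hides the fact that $\frac{2^{2i+1}}{(2^{i+1}-1)(2^i-1)}>1$, so the hypothesis of Lemma~\ref{l:gen_set_in_some_groups}(\ref{l:gen_set_in_some_groups:con:strong_exp}) is not literally met on the full range $n<2^i\ddd_i'\log|H|$ with the stated constants. These are imprecisions of the same kind that already live in the paper (compare Corollary~\ref{cor:weka_size_of_balls}, whose constants are equally off by lower-order terms), and they do not affect the downstream applications where $|H_i|$ is chosen huge, but as written your final displayed inequality is not justified by the line that precedes it. Replacing it with the floor bound and adding ``for $n$ sufficiently large (relative to $2^{3i}$)'' would make the argument correct.
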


\smallskip

\section{Proof of the Oscillating Growth Theorem }
\label{s:proof}

We are going to prove the following result, which implies
the Oscillating Growth Theorem (see Corollary~\ref{cor:osc-growth}),
and is a stepping stone to the proof of the Main Theorem.

\begin{thm}
\label{t:general}
For every  admissible integer functions $f,g: \nn\to \nn$, such that
$\lim g(n)/\ga_{\bG_\omega}(n) = \infty$
and any sequences of integers $\{a_i\} \to \infty$ and $\{b_i\} \to \infty$,
there exists a finitely generated group $\Ga$ and a
generating set $\<S\> = \Ga$, such that
$$
\ga_{\Ga}^S(n) \.  < g(n) \ \ \text{for infinitely many} \ \ n\in \{b_i\}\ts,
$$
and
$$
\ga_{\Ga}^S(n) \. > f(n) \ \ \text{for infinitely many} \ \ n\in \{a_i\}\ts.
$$
\end{thm}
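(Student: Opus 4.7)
The plan is to take $\Ga \. = \. \bp_{k \geq 1} F^{m_k}_\om(H_{N_k})$, where $H_{N_k} = \PSL_2(\zz_{N_k})$ carries the four-involution generating set of Lemma~\ref{l:gen_set_in_some_groups}, and the sequences $\{m_k\}$, $\{N_k\}$ are chosen inductively together with subsequences $\{b_{j_k}\} \ssu \{b_i\}$ and $\{a_{i_k}\} \ssu \{a_i\}$ tending to infinity. The guiding principle is that the ball of radius $b_{j_k}$ in $\Ga$ should still only ``see'' $\bG_\om$ from all but the first $k-1$ components, whereas at radius $a_{i_k}$ the $k$-th component $F^{m_k}_\om(H_{N_k})$ has already entered its near-exponential growth regime.

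At step~$k$, having set the cumulative constant $C_{k-1} = \prod_{j<k} |F^{m_j}_\om(H_{N_j})|$, I would choose, in order: first $b_{j_k} \in \{b_i\}$ so large that $C_{k-1} \ts \ga_{\bG_\om}(b_{j_k}) < g(b_{j_k})$, possible because $g/\ga_{\bG_\om}\to\infty$; then $m_k > m_{k-1}$ with $\cff(m_k) = 2^{m_k}-1 \geq b_{j_k}$, so that by Lemma~\ref{l:contracting_property_of_G} every $F^{m_\ell}_\om(H_{N_\ell})$ with $\ell\geq k$ has the same $b_{j_k}$-ball as $\bG_\om$; then $a_{i_k}\in\{a_i\}$ with $a_{i_k}>b_{j_k}$ and $a_{i_k} > \ddd_{m_k}\log f(a_{i_k})$, possible by admissibility of $f$ (with $\ddd_{m_k}=2^{m_k+1}\rK$ the constant from Corollary~\ref{cor:weka_size_of_balls}); and finally $N_k$, a product of primes $\equiv 1\pmod 4$, large enough that $\log|H_{N_k}|>a_{i_k}/\ddd_{m_k}$, so that $a_{i_k}$ sits inside the exponential-growth regime.

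The lower bound is then immediate: by Lemma~\ref{l:balls_in_products}(i) and Corollary~\ref{cor:weka_size_of_balls},
$$
\ga_\Ga(a_{i_k}) \. \geq \. \ga_{F^{m_k}_\om(H_{N_k})}(a_{i_k}) \. > \. \exp\bigl(a_{i_k}/\ddd_{m_k}\bigr) \. > \. f(a_{i_k})\ts.
$$
For the upper bound I would argue in the spirit of Lemma~\ref{upper_bound_for_ball}: combining the canonical epimorphism $\pi:\Ga\toto\bG_\om$ from Lemma~\ref{l:product} with the first $k-1$ component projections $\zeta_j:\Ga\toto F^{m_j}_\om(H_{N_j})$ gives a map
$$
B_\Ga(b_{j_k}) \,\longrightarrow\, B_{\bG_\om}(b_{j_k}) \oplus \bigoplus_{j<k} F^{m_j}_\om(H_{N_j})
$$
that is injective, since any word of length $\leq b_{j_k}$ trivial in $\bG_\om$ is automatically trivial in each $F^{m_\ell}_\om(H_{N_\ell})$ with $\ell\geq k$ by Lemma~\ref{l:contracting_property_of_G}. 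This yields $\ga_\Ga(b_{j_k}) \leq C_{k-1}\ts\ga_{\bG_\om}(b_{j_k}) < g(b_{j_k})$, as required.

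The main obstacle I anticipate is a bookkeeping one: justifying the upper bound through Lemma~\ref{upper_bound_for_ball} directly requires checking its (spliting) hypothesis, which is not transparent for our groups. Fortunately, property~(\ref{l:gen_set_in_some_groups:con:simple}) of Lemma~\ref{l:gen_set_in_some_groups} (that $H_{N_k}$ is normally generated by the image of $r$) combined with Lemma~\ref{l:diam_of_F^i} identifies $F^{m_k}_\om(H_{N_k})$ with the full wreath product $H_{N_k}\wr \bG_{\om,m_k}$, which exhibits the normal subgroups $N_k = H_{N_k}^{2^{m_k}}$ and the compatible quotient $F^{m_k}_\om(H_{N_k})/N_k = \bG_{\om,m_k}$ needed by the splitting condition; the direct injectivity argument sketched above is arguably cleaner and sidesteps this verification altogether.
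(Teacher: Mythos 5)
Your proof is correct and follows essentially the same strategy as the paper's: the same construction $\Gamma = \bp F^{m_k}_\omega(\PSL_2(\Z_{N_k}))$, the same inductive choice of parameters alternating between $\{b_i\}$ and $\{a_i\}$, the same lower bound via Lemma~\ref{l:balls_in_products}(i) and Corollary~\ref{cor:weka_size_of_balls}, and the same fiber-counting upper bound over $\bG_\omega$. Your replacement of the appeal to Lemma~\ref{upper_bound_for_ball} (with its splitting hypothesis) by a direct injectivity argument from Lemma~\ref{l:contracting_property_of_G} is a cosmetic reorganization of the same estimate.
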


{\small
\begin{remark}
The reason for including the subsequences $a_i$ and $b_i$ is to be able to ensure that for any $f \gg f'$ and $g \ll g'$ then we have
$\ga_\Ga \gg f'$ and $\ga_\Ga \ll g'$.
In particular we can guarantee that
$$
\liminf_{n\to \infty} \ts \log_n\log \ga_\Gamma(n) \. = \. \liminf_{n\to \infty} \ts \log_n\log \ga_{\bG_\om}(n)\ts.
$$
\end{remark}
}

\begin{proof}[Proof of Theorem~\ref{t:general}]
The group $\Gamma$ will be the product of marked groups
$G_i = F^{m_i}_\omega(H_i)$ where $H_i=\PSL_2(p_i)$ and $\{m_i\}$ and $\{p_i\}$
are sequences which grow sufficiently fast constructed using the
functions $f$ and $g$. Lemmas~\ref{l:contracting_property_of_G}
and~\ref{l:diam_of_F^i} imply that the sequence of groups $G_i$
converge to $\bG$ and satisfies the
condition~(\ref{condition}) with $N_i = H_i^{\oplus 2^{m_i}}$.

By Corollary~\ref{size_of_balls}, the growth of $\ga_\Ga$ is faster that
the each $\ga_{G_i}$.  When $p_i$ is sufficiently large one can find
$n_i \in \{a_j\}$ such that
\begin{equation*}
\label{eq:lowerbound}
\tag{lower}
\ga_\Ga (n_i) \geq \ga_{G_i}(n_i) > f(n_i)\ts,
\end{equation*}
which guarantees that $\ga_\Ga \gg f$.

Also, if $m_i$ grows sufficiently fast then the $G_i$ converge very quickly
to $\bG_\om$ and by Lemma~\ref{upper_bound_for_ball} there exists $n'_i \in \{b_j\}$
such that
\begin{equation*}
\label{eq:upperbound}
\tag{upper}
\ga_\Ga (n'_i) \leq \ga_{\bG}(n'_i) \prod_{j<i} |N_i| < g(n'_i)\ts.
\end{equation*}
This guarantees that $\ga_\Ga \ll g$.

The only thing which is left is to determine how fast the sequences $\{m_i\}$ and
$\{p_i\}$ have to grow
in order to ensure the above inequalities.
Define the the sequences $m_i$ and $p_i$ as follows:
\begin{itemize}
\item $m_1=1$ and  $H_1= \mathbf{1}$.

\item Let $n_i\in \{b_j\}$ be in integer such that
$\frac{g(n_i)}{\ga_{\bG_\omega}(n_i)} >
\prod_{j < i} |H_{j}|^{2^{m_j}}$
(such integer exists since $\{b_s\}\to \infty$ and
$\frac{g(n)}{\ga_{\bG_\omega}(n)} \to \infty$).
Define $m_{i}$ such that $\cff(m_{i})> n_i$ and that $m_i > m_{i-1}$.
This choice of $m_i$ ensures that the inequalities~(\ref{eq:upperbound})
are satisfied, because
the kernel $N_i$ of the map $G_i \to \mathbf{G}_{\om,m_i}$
is isomorphic
to the direct sum of $2^{m_i}$ copies of $H_i$ by Lemma~\ref{l:diam_of_F^i}.

\item Let $n'_i \in \{a_j\}$ be in integer such that
$$
\frac{n'_i}{\log f(n'_i)} \geq \ddd_{m_i}=\rK \cdot 2^{m_{i}}\ts,
$$
where $\ddd_i$ is the constant from Corollary~\ref{cor:weka_size_of_balls}.
Such integer exists since $\{a_i\}\to \infty$ and
$n/\log f(n) \to \infty$.
Define  $H_{i}$ to be a group together with generating set
(twisted by $\varphi^{x_{m_i+1}}$) from
Lemma~\ref{gen_set_in some_groups} of size more than
$e^{n'_i}$. Again this choice of $n'_i$ ensures that the inequalities~(\ref{eq:lowerbound})
are satisfied.
\end{itemize}

\noindent
These two (rather crude) estimates for the size of the balls in $\Gamma$
shows that conditions~(\ref{eq:lowerbound}) and~(\ref{eq:upperbound})
are satisfied. Therefore the growth function of $\Ga$ is infinitely often
larger than $f$ and infinitely often smaller than $g$.
\end{proof}

%

\smallskip

\section{Control of the upper bound}
\label{s:control}

\subsection{}
Roughly speaking, we obtain a very good control over
the upper bound by using finite groups~$H_i$ of the carefully chosen size.
We observe that Lemma~\ref{gen_set_in some_groups} gives us an
``almost continuous'' family of finite groups which can be plugged
into the construction.

Unfortunately, the growth estimates we have so far are too crude for such results.
If the sequence $m_i$ grows sufficiently fast, then the growth of the group
$\Ga$ (in certain range), is very well approximated by the growth of the group
$\Ga_i=G_i \op \bG_\om$.  This is because
\begin{align*}
\ga_\Ga(n) \geq  \ga_{\Ga_i}(n) & \quad \mbox{for all } n,\\
\ga_\Ga(n) < L_i\ga_{\Ga_i}(n) & \quad \mbox{for all } n\leq \cff(m_{i+1})
\mbox{ and } L_i = \prod_{j<i} |N_i|.
\end{align*}
The first condition follows form the observation that there are maps from
the marked group $\Ga$ to both $G_i$ and $\bG_\om$. By
Remark~\ref{re:universal} this gives a is a map onto their product $\Ga_i$.
Therefore the growth in the image is slower that the growth of $\Ga$.

The second condition is a consequence of the fact that the
ball of radius $\cff(m_{i+1})$ in $\Ga$ is the same as the ball
in the product $\bp_{j\leq i+1} G_i$ or in $\bG_\om \op \left[\bp_{j\leq i} G_i\right]$,
and that
$$
\left| \ts\ker \left(\bG_\om \op \left[\bp_{j\leq i} G_i\right] \toto \ts \Ga_i\right)\right|
\, = \, L_i\ts.
$$
If $m_{i+1}$ is very large if suffice to find $H_i$ such that the growth of
$\Ga_i$ is always bellow $f_1$ but sometimes it is above $f_2$.

\subsection{}
Below we present much better bounds on the growth in the following marked group:
$$
\Laoi(H) = F^i_\om(H) \op \bG_\om \subset F^i_\om(H) \oplus \bG_\om,
$$
which is closely related to the group $\Ga_i$ mentioned above.
%
%
The growth of the balls in $\Lambda$ is in 3 different regimes
depending on the scale.  For small radius $n < t_i$ the balls
are the same as the ball in $G_\omega$ and grow sub-exponentially.

For big radius $n > T_i = 2^i D_i\diam |H|$ the finite group $F^i_\omega(H)$
has been exhausted and the size of the ball of radius $n$
is very close to $|H|^{2^i}$ time the size of the ball in
$\bG_\omega$ and again is sub-exponential.

In the intermediate range $t_i<n < T_i$ the growth is
more complicated -- it is similar to the growth in the finite
group $H$ and therefore is ``locally'' is very close to exponential.
However, the proof of the next results requires to obtain some bounds
for this intermediate range.  As usual in such situations, understanding
the exact growth in the intermediate range is extremely difficult
and our bounds are far from optimal.
Improving these bounds will result in weakening the technical
conditions~(\ref{t:main:con:growth}) of~Theorem~\ref{t:main}
and~(\ref{l:postponed:con:technical}) in~Lemma~\ref{l:postponed}.

\subsection{}
The following technical lemma ensures that we can find the group $H$
such that the growth of the group $\Laoi(H)$ is between $f_1$ and $f_2$.
We postpone the proof until the next section.

\begin{lemma}[Main lemma]
\label{l:postponed}
Let $f_1,f_2,g: \nn \to \nn$ \ts be admissible functions which satisfy the conditions
\begin{enumi}
\item $f_1(n)/g(n)$ is increasing function, 
\item $f_1(n)> g(n)^3$ for all sufficiently large $n$,
\item $f_1(n)> f_2(n)^3$ for all sufficiently large $n$, 
\item $g(n)\geq \ga_{\bG_\om}(n)$, where $\bG_\om$ is
a Grigorchuk group of intermediate growth, 
\item
\label{l:postponed:con:technical}
$\displaystyle
\exp\left[
\frac{\log g(n)}{Cn^2} \,
f_1^*\left(\frac{n}{C\log g(n)}\right)
\right]
\. > \.
\frac{C\ts f_2^*(Cn)}{n^2}$, \ts
for any $C>0$ and sufficiently large $n=n(C)$.
\end{enumi}

\smallskip

\noindent
Then, for every~$L>0$ and all sufficiently large~$i$, one can find
a finite marked group~$H_i$, such that:
\begin{enum1}
\item $H_i$ is normally generated by $r_{x_{i+1}}$,
\item there exists $n$ such that $\ga_{\lao}(n) \. > \. f_2(n)$,
\item $f_1(n) \. > \. L\ts \ga_{\lao}(n)$ for all $n > \cff(i)$,
\end{enum1}
where $\ga_\lao \ts = \ts \ga_{\Laoi(H_i)}$.
\end{lemma}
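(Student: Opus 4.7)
The approach is to take $H_i = \PSL_2(\zz_{N_i})$ from Lemma~\ref{l:gen_set_in_some_groups}, twisted by $\varphi^{x_{i+1}}$ so that $H_i$ is normally generated by $r_{x_{i+1}}$ (giving condition~(1) of the lemma), with the size parameter $N_i$ to be chosen. By Remark~\ref{r:product} and the density of primes $\equiv 1 \pmod 4$, the admissible values of $\log |H_i|$ form a dense subset of $\rr_+$, so we may tune $|H_i|$ essentially continuously. The task then reduces to picking $\log |H_i|$ so that the growth of $\Lambda = \Laoi(H_i) = F^i_\om(H_i) \op \bG_\om$ exceeds $f_2(n)$ at some value of $n$ while remaining below $f_1(n)/L$ for all $n > \cff(i)$.

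For condition~(2), the lower bound comes essentially for free from Corollary~\ref{cor:size_of_balls}: inside $F^i_\om(H_i)$ the ball of radius $n$ intersects the kernel $H_i^{2^i} = \ker\{F^i_\om(H_i)\to \bG_{\om,i}\}$ in at least $\exp(n/\ddd_i')$ elements, valid for $n < 2^i \ddd_i' \log|H_i|$ with $\ddd_i' = 2^{i+1}\rK'$. This quantity exceeds $f_2(n)$ at $n^* \approx f_2^*(\ddd_i')$, so it suffices to ensure $n^* < 2^i \ddd_i' \log|H_i|$, which is a lower bound on $\log|H_i|$.

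For condition~(3), I would split the upper bound argument into three regimes. In the small regime $n \leq \cff(i)$, Lemma~\ref{l:contracting_property_of_G} gives $\ga_\lao(n) = \ga_{\bG_\om}(n) \leq g(n)$, and $Lg(n) < f_1(n)$ holds for $n$ large since $f_1 > g^3$. In the large regime $n \geq T_i := 2^i \ddd_i \log|H_i|$, the finite group is saturated and $\ga_\lao(n) \leq |H_i|^{2^i}|\bG_{\om,i}|\cdot g(n)$, yielding an upper bound on $\log|H_i|$ of the form $2^i\log|H_i| + \log g(n) + \log L < \log f_1(n)$ evaluated at $n = T_i$. The intermediate regime $\cff(i) < n < T_i$ is the delicate part: one must exploit the wreath product structure $F^i_\om(H_i) \subset H_i \wr \bG_{\om,i}$ (Remark~\ref{r:inclusion_into_wreath_product}) to bound $\ga_\lao(n)$ by $g(n)$ times the number of decorations $(h_j)_j \in H_i^{2^i}$ reachable by a word of length $\leq n$, subject to $\sum_j |h_j| \leq n$.

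The main obstacle is this intermediate-regime upper bound. The crude bound $\ga_{H_i}(m) \leq \exp(\rK m)$ applied per copy yields $\ga_\lao(n) \leq g(n)\exp(\rK n)$, which is hopelessly loose for subexponential~$f_1$. One must instead exploit that the total word length distributes across the $2^i$ copies of $H_i$, so that no single word of length $n$ can drive all copies to their individual exponential maxima; the effective rate in the intermediate range is of order $1/\ddd_i \sim 2^{-i}/\rK$ rather than $\rK$, matching the rate of the lower bound. Matching this refined upper bound against the lower bound required by condition~(2) yields a compatibility requirement on $\log|H_i|$ expressed in terms of $f_1$, $g$, and $f_2^*$; after evaluating the key constraints at $n = T_i$ and inverting the relation $T_i = 2^i \ddd_i \log|H_i|$ via $f_1^*$, this compatibility becomes exactly condition~(\ref{l:postponed:con:technical}). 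Thus (v) guarantees that the admissible range of $\log|H_i|$ is non-empty, and by density we can pick $N_i$ realizing any value in this range. The quantitative details of the intermediate-regime count and the resulting compatibility calculation are deferred to Section~\ref{s:lemma-postponed}.
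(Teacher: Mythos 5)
Your overall setup is right — take $H_i=\PSL_2(\zz_{N_i})$ twisted by $\varphi^{x_{i+1}}$, use Corollary~\ref{cor:size_of_balls} for the lower bound, and split the upper-bound analysis into three regimes — but the proposal has a genuine gap exactly where the difficulty lies, and the paper's proof takes a structurally different route that you miss entirely.

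The gap is the intermediate-regime upper bound. You assert that by exploiting how word length distributes over the $2^i$ copies of $H_i$ the effective exponential rate in the range $\cff(i)<n<T_i$ drops from $\rK$ to $1/\ddd_i\sim 2^{-i}/\rK$. This is not established — and is not what the paper proves. Lemma~\ref{l:upper_bound_for_Lambda} obtains the intermediate bound from submultiplicativity of the growth function: after $n=\cff(i)$ the growth can increase at worst exponentially with rate $1/\Upsi_i$, where $\Upsi_i\approx n/\log g(n)$ at $n\approx 2^i$. That rate is what produces the factor $\log g(n)/n$ inside condition~(v); your claimed rate $1/\ddd_i\approx 2^{-i}$ would yield a different (and incorrect) technical condition. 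More importantly, even with the paper's bound (and a fortiori with any crude bound), the "admissible window" for $\log|H_i|$ you want to tune into is \emph{not} guaranteed to be nonempty when $f_1\sim f_2$; the paper explicitly states that the direct-tuning strategy "only works if the gap between $f_1$ and $f_2$ is very big," and in Section~\ref{s:lemma-postponed} handles the big-gap case exactly as you propose (via Bertrand's postulate) but then switches to a proof by contradiction for the small-gap case. That argument — partition marked groups into classes $\Ds,\Dg,\Deq$, observe via Proposition~\ref{pro:F_preserves_products} and Lemma~\ref{l:products_are_small} that if $\Deqi=\emptyset$ then $\Dsi$ is closed under $\bp$, then build a group $\PSL_2(\zz_N)$ with $\log N\approx\NNN{f_1}{i}$ far exceeding $\LLL{f_2}{i}$ so that it cannot lie in $\Dsi$ — is the actual mechanism by which condition~(v) is used, and it is nonconstructive (the paper only shows $\Deqi\ne\emptyset$, not how to find $H_i$). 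Your density-of-primes argument gives nothing comparable, so the proposal as written cannot close the small-gap case.
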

\begin{proof}[Proof of Theorem~\ref{t:main}]
The proof is almost the same as the proof of the Theorem~\ref{t:general},
but one needs to pick the groups $H_{i}$ of the correct size.
First we pick $m_1$ such that for $n > \cff(m_1)$ we have
$$
f_1(n) > f_2(n)^3
\quad\mbox{and} \quad
f_1(n) > g_1(n) > \ga_{\bG_\om}(n)
$$
which is possible because the functions satisfy conditions (\ref{t:main:con:admissible}-\ref{t:main:con:lower}).

When choosing the depths $m_i$ one need to satisfy three conditions:
the first one is $m_i > m_{i-1}$ ensures that the groups grows;
the second one as in Theorem~\ref{t:general} is
that $\frac{g_1(n_i)}{\bG_\om(n_i)} \geq L$
for some $n_i \leq \cff(m_i)$, where
$L = \prod_{j<i}  |H_j|^{2^{m_j}}$
which guarantees that the growth of $\Ga$ will be sometimes smaller than $g_1$.
The last one is that $m_i$ is larger than the bound for $i$ in
Lemma~\ref{l:postponed} which depends on $L$. 

If $m_i$ is chosen as above then we can apply the Lemma~\ref{l:postponed}
(with $g_1$ instead of $g$)
and obtain the group $H_i$. The second property of $H_i$ implies that the growth
of $\Ga$ is larger than $f_2$ for some $n > \cff(m_i)$, and the third implies
that it is bellow $f_1$  for $\cff(m_i)< n< \cff(m_{i+1})$.

As a result we have that the growth of the group $\ga_\Ga$ is between
$f_1$ and $g_2$ for all sufficiently large~$n > \cff(m_1)$,
and is above $f_2$ and below $g_1$ at least once in each interval
$\cff(m_i)< n< \cff(m_{i+1})$, which completes the proof.
\end{proof}

\smallskip

\section{Proof of Main Lemma~\ref{l:postponed}}\label{s:lemma-postponed}

\subsection{Outline}

The following is a rough outline of the proof.
We start with some estimates of the growth
of $\Laoi$ in the intermediate range: the upper bound is coming from the
submultiplicativity of the growth functions, and the lower is based on
the growth inside $H$. It is clear that the lower bound is far from being optimal,
but we suspect that the upper on is relatively close the the optima bound.

These bounds give that (Corollary~\ref{cor:upper_bound_for_H})
that if the group $H$ is small then the growth of $\Laoi(H)$
is slower than  $f_1$ and using
Corollary~\ref{size_of_balls} this growth is faster that $f_2$
if the group $H$ is big (Corollary~\ref{cor:lower_bound_for_H}).
If the gap between $f_1$ and $f_2$ is sufficiently
large then these sets have a nontrivial intersection which implies the
existence of $H$ satisfying the requirements of the lemma.

Unfortunately, for this strategy to work one need that the gap between~$f_1$
and $f_2$ to be very big.  The reason for that, is that we are using very
crude estimates for
the sizes of balls, which does not allow us to obtain better estimates for
the growth of the group $\Laoi(H)$. In order to obtain results where the
functions $f_1$ and $f_2$ we argue by contradiction.  As a result, we only
show the existence of the group~$H$, but not an algorithm to construct it.

\subsection{Three classes of marked finite groups}
First we divide the finite groups $H$ into~$3$ classes: $\Ds$, $\Dg$ and $\Deq$,
depending how the growth of $\Laom(H)$ compares with $f_1$ and $f_2$.
If one assumes that the class $\Deq$ is empty
(and the gap between $f_1$ and $f_2$ is not too small)
then $\Ds$ is closed under products of marked groups
(Corollary~\ref{cor:closed_under_prod}) which allows us to construct
a group in $\Ds$ which is much larger than the bound in
Corollary~\ref{cor:upper_bound_for_H}.
Finally one obtains a contradiction if
the size of this group is larger than the estimate from
Corollary~\ref{size_of_balls}.

\begin{defn}
Given a marked group~$H$,
to simplify the notation denote by $\ga_\lao$ the growth function
of $\Laoi(H)$.
Let $\Dsi$ denote the set of marked groups $H$ such that
$f_2(n) > \ga_{\lao}(n)$ for all $n> \cff(i)$.  Similarly, let
$\Dgi$ denote the set of marked groups $H$ such that
$f_1(n)^{2/3} \leq \ga_{\lao}(n)$ for some~$n> \cff(i)$.
Finally, by $\Deqi$ denote the set of marked groups~$H$ such that
$f_1(n)^{2/3} > \ga_{\lao}(n)$ for all~$n > \cff(i)$, but
$f_2(n) \leq \ga_{\lao}(n)$ for some~$n$.
\end{defn}

The conclusion of Lemma~\ref{l:postponed} is equivalent to saying that $\Deqi$
is not empty when $i$ is sufficiently large, since we can guarantee
that  $f_1(n)^{1/3} > L$ for $n > \cff(i)$.

\subsection{Details: large gap}

\smallskip

\begin{lemma}
\label{l:upper_bound_for_Lambda}
Fix the group~$H$.  The growth of the function $\ga_{\lao}$
is bounded above by the function
$\gaga_{T}$ defined as follows
$$
\gaga_{T} (n)= \left\{
\begin{array}{ll}
g(n) & \mbox{for } n \leq \cff(i) \\
\exp\left(  n/ \Upsi_i\right)
                 & \mbox{for } \cff(i) \leq n \leq T \\
|H|^{2^i} g(n) & \mbox{for } n \geq T \\
\end{array}
\right.
$$
where
$\Upsi_i = \min \left\{\frac{n}{\log g(n)}
\mid \frac{\cff(i)}{2} \leq n \leq \cff(i) \right\}$.
\end{lemma}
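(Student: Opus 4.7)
\bigskip

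\noindent\textbf{Proof plan for Lemma~\ref{l:upper_bound_for_Lambda}.}
By construction $\Laoi(H)$ embeds diagonally into $F^i_\om(H) \oplus \bG_\om$, and admits two surjections: the projection $\pi_F: \Laoi(H) \to F^i_\om(H)$ and the projection $\pi_G: \Laoi(H) \to \bG_\om$. The plan is to treat the three ranges of $n$ separately using one structural tool for each, then observe that the global bound is the pointwise minimum (in particular, the middle regime only takes over when it is the smallest of the three).

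\emph{Small range $n \leq \cff(i)$.} First I would show that $\pi_G$ is injective on the ball $B_{\Laoi(H)}(n)$. Indeed, two words $w_1,w_2\in \F_4$ of length $\leq n$ evaluate to the same element of $\Laoi(H)$ iff $w_1w_2^{-1}$ is trivial in \emph{both} $F^i_\om(H)$ and $\bG_\om$. Since $|w_1w_2^{-1}| \leq 2\cff(i)$, Lemma~\ref{l:contracting_property_of_G} asserts that triviality in these two groups coincides, hence the pair collapses iff its image in $\bG_\om$ collapses. Consequently $\ga_{\lao}(n) \leq \ga_{\bG_\om}(n) \leq g(n)$.

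\emph{Large range $n \geq T$.} Here I would use a crude kernel bound. The kernel of $\pi_G$ consists of pairs $(u,1)$ where $u \in F^i_\om(H)$ arises from a word trivial in $\bG_\om$; any such word is also trivial in the quotient $\bG_{\om,i}$, so $u$ lies in $\ker\bigl(F^i_\om(H) \toto \bG_{\om,i}\bigr)$, which by Lemma~\ref{l:diam_of_F^i} (and Remark~\ref{r:inclusion_into_wreath_product}) injects into $H^{\oplus 2^i}$. Therefore $|\ker \pi_G| \leq |H|^{2^i}$ and the fiber bound gives $\ga_{\lao}(n) \leq |H|^{2^i}\ga_{\bG_\om}(n) \leq |H|^{2^i} g(n)$ for every $n$ (this one is valid everywhere, but only competitive when $n\geq T$).

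\emph{Intermediate range $\cff(i) \leq n \leq T$.} This is the main obstacle and relies on submultiplicativity of the growth function: $\ga_{\lao}(k n_0) \leq \ga_{\lao}(n_0)^k$ for every $n_0,k$. Choose $n_0 \in [\cff(i)/2, \cff(i)]$ achieving the minimum $n_0/\log g(n_0) = \Upsi_i$ in the definition of $\Upsi_i$; by the small-range case, $\ga_{\lao}(n_0) \leq g(n_0)$. Taking $k = \lceil n/n_0\rceil$ yields
\[
\log \ga_{\lao}(n) \ \leq\ \lceil n/n_0\rceil \log g(n_0) \ \leq\ \frac{n}{\Upsi_i} + \log g(n_0),
\]
and since $n \geq \cff(i) \geq n_0$ the additive error $\log g(n_0) \leq n_0/\Upsi_i \leq n/\Upsi_i$, which (up to absorbing a harmless factor of~$2$ into~$\Upsi_i$, or into the definition of the window by shrinking it slightly) gives $\ga_{\lao}(n) \leq \exp(n/\Upsi_i)$ as claimed. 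The key point is that the choice of $n_0$ is free within the prescribed window, so one optimizes over it to get the tightest exponential rate; the window $[\cff(i)/2,\cff(i)]$ is natural because below $\cff(i)/2$ we would lose the submultiplicativity leverage while above $\cff(i)$ we no longer have $\ga_{\lao}(n_0) \leq g(n_0)$.

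Putting the three bounds together gives $\ga_{\lao}(n) \leq \gaga_T(n)$ on each of the three pieces, which finishes the proof. The only genuinely delicate point is verifying that the additive slack in the intermediate case is absorbed correctly; everything else is a direct application of Lemmas~\ref{l:contracting_property_of_G} and~\ref{l:diam_of_F^i} together with the trivial submultiplicativity inequality.
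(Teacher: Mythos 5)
Your argument follows essentially the same route as the paper: the small-range case invokes the contraction Lemma~\ref{l:contracting_property_of_G}, the large-range case uses the size of $\ker\bigl(\Laoi(H)\to\bG_\om\bigr)$ via Lemma~\ref{l:diam_of_F^i}, and the middle range uses submultiplicativity of $\ga_\lao$. The one place you deviate is the intermediate range, and there you genuinely lose a factor: rounding $n$ up to a multiple of a single $n_0$ gives $\log\ga_\lao(n) \le \lceil n/n_0\rceil\log g(n_0) \le n/\Upsi_i + n_0/\Upsi_i$, and since $n_0$ can be comparable to $n$ near the left endpoint $n=\cff(i)$, this is a multiplicative factor of~$2$ in the exponent, not an asymptotically negligible additive term. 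You flag this yourself, and the loss is indeed harmless for the downstream corollaries, which all carry free constants $C$ --- but as written your argument proves the lemma with $\Upsi_i$ replaced by $\Upsi_i/2$, not the stated bound. The paper avoids the rounding penalty entirely by writing $n$ as an \emph{exact} sum $n=s_1+\cdots+s_k$ with each $s_j\in[\cff(i)/2,\cff(i)]$ (always possible for $n\ge\cff(i)$); submultiplicativity then gives
$$
\log\ga_\lao(n) \,\le\, \sum_{j}\log\ga_\lao(s_j) \,\le\, n\cdot\max_{\cff(i)/2\le s\le\cff(i)}\frac{\log\ga_\lao(s)}{s} \,\le\, \frac{n}{\Upsi_i}
$$
with no slack, which is exactly the claimed middle-range bound.
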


\begin{figure}[hbt]
\begin{center}
\psfrag{T}{$T$}
\psfrag{t}{$\vt(i)$}
\psfrag{g}{\small $\log \ga_{\bG_\om}$}
\psfrag{L}{\small $\log \ga_{\lao}$}
\psfrag{h}{\small $\log |H|^{2^i} \ga_{\bG_\om}$}
\psfrag{n}{$n$}
\psfrag{U}{\small \hskip-.01cm $\Ups_T$}
\psfrag{a}{\small \hskip-1.01cm $2^i \log |H|$}
\epsfig{file=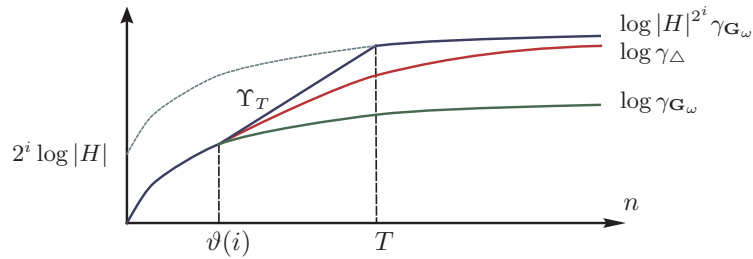,width=7.5cm}
\end{center}
\caption{The graph of functions as in Lemma~\ref{l:upper_bound_for_Lambda}.}
\label{f:estimate}
\end{figure}

\begin{proof}
The ball of radius less than $\cff(i)$ in $\Laoi(H)$ is the same as in
the group $\bG_\om$ which gives the bound for small $n$.
The kernel of
$\Laoi(H) \to \bG_\om$ has size $|H|^{2^i}$ which easily implies the bound for large $n$.

In the middle range one uses sub multiplicativity of growth functions
$
\ga_{\lao}(a+ b) \leq \ga_{\lao}(a) \cdot \ga_{\lao}(b)
$
for any $a,b$. This implies that if $n \geq m$ then
$$
\frac{\log \ga_{\lao}(n)}{n} \, \leq\,
\max_{m/2 \. \le \. s \. \le \. m} \. \frac{\log \ga_{\lao}(s)}{s}
.
$$
This inequality for $m = \cff(i)$ is equivalent to  the bound in the
middle range (see Figure~\ref{f:estimate}).
\end{proof}

The lemma implies that if the size of $H$ is very small, then the growth of
$\Laoi(H)$ is smaller then~$f_1$.

\begin{cor}
\label{cor:upper_bound_for_H}
\begin{enumi}
\item
Let $f$ be an admissible function, such that $f(n)/g(n)$ for some
$g(n) > \ga_{\bG_\om}(n)$ is increasing.  If
$$
|H| < \exp \.
 \left[
\frac{f^*(\Upsi_i)}{2^{i}\ts \Upsi_i} - \frac{\log g\ts (f^*(\Upsi_i))}{2^{i}}\right],
$$
then $f(n) > \ga_{\lao}(n)$ for all integer~$n$.

\item
Moreover, if $f$ also satisfies  $f(n)\geq g(n)^3$ for $n\geq \cff(i)$
then for
$$
|H| \. < \.
\UUU{f}{i} := \exp \ts \left[ \frac{f^*(\frac{2}{3}\Upsi_i)}{2^{i+1} \ts \Upsi_i}\right].
$$
we have that $f(n)^{2/3} > \ga_{\lao}(n)$ for all integers $n \geq \cff(i)$.
\end{enumi}
\end{cor}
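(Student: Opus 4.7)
The plan is to apply Lemma~\ref{l:upper_bound_for_Lambda}, which bounds $\ga_\lao$ above by the piecewise function $\gaga_T$ whose ``middle'' exponential regime is controlled by $\Upsi_i$ and whose ``large'' regime is $|H|^{2^i} g(n)$. The natural crossover point $T$ is the unique positive solution of $\exp(T/\Upsi_i) = |H|^{2^i}\ts g(T)$. For part~(i), the strategy is to show that the hypothesis on $|H|$ forces $T < f^*(\Upsi_i)$, and then to verify each of the three pieces of $\gaga_T$ separately.

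To obtain $T < f^*(\Upsi_i)$, I would rewrite the defining equation for $T$ as $T/\Upsi_i - \log g(T) = 2^i \log|H|$, and the hypothesis as $f^*(\Upsi_i)/\Upsi_i - \log g(f^*(\Upsi_i)) > 2^i \log|H|$. The map $\psi(x) = x/\Upsi_i - \log g(x)$ is eventually strictly increasing (a short calculation from admissibility of $g$: since $x/\log g(x)$ is increasing, one has $g'(x)/g(x) < \log g(x)/x$, and $\log g(x)/x < 1/\Upsi_i$ throughout the relevant range $x \ge \cff(i)/2$), so comparing $\psi(T)$ with $\psi(f^*(\Upsi_i))$ yields the claim. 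Once $T < f^*(\Upsi_i)$ is in hand, the middle range is immediate: $n \le T < f^*(\Upsi_i)$ is equivalent to $\log f(n) > n/\Upsi_i$, so $f(n) > \exp(n/\Upsi_i) = \gaga_T(n)$. In the large range, the value $f(T) > |H|^{2^i} g(T)$ combined with monotonicity of $f/g$ propagates to $f(n) > |H|^{2^i} g(n)$ for all $n \ge T$. For $n \le \cff(i)$, Lemma~\ref{l:contracting_property_of_G} gives $\ga_\lao(n) = \ga_{\bG_\om}(n) \le g(n)$, so the comparison reduces to $f \ge g$ on that initial interval.

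For part~(ii), I would apply part~(i) with $\tilde f := f^{2/3}$ in place of $f$. Admissibility is preserved since $\Phi_{\tilde f} = (3/2)\Phi_f$ is still increasing, and the inverse transforms as $\tilde f^*(\Upsi_i) = f^*(2\Upsi_i/3)$. The bound from part~(i) becomes
\[
\log|H| \, < \, \frac{f^*(2\Upsi_i/3)}{2^i \Upsi_i} \, - \, \frac{\log g(f^*(2\Upsi_i/3))}{2^i}.
\]
The hypothesis $f \ge g^3$ is exactly what is needed to simplify: at $m := f^*(2\Upsi_i/3)$ the definition of $f^*$ yields $\log f(m) = 3m/(2\Upsi_i)$, hence $\log g(m) \le (1/3)\log f(m) = m/(2\Upsi_i)$. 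Substituting bounds the subtracted term by $m/(2^{i+1}\Upsi_i)$ and collapses the whole expression to $\log \UUU{f}{i} = m/(2^{i+1}\Upsi_i)$, matching the stated threshold.

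The main obstacle I anticipate is twofold. First, the monotonicity of $\psi$ used to locate $T$ below $f^*(\Upsi_i)$ must be justified carefully from admissibility of $g$ and the quantitative size of $\Upsi_i$. Second, for part~(ii) the function $\tilde f/g = f^{2/3}/g$ is not obviously increasing even when $f/g$ is, so the ``large range'' step may need to be executed directly for $\tilde f$ (using the inequality $f \ge g^3$ at $n = T$, which gives $f(T)^{2/3} \ge g(T)^2 \ge |H|^{2^i} g(T)$ once $|H|^{2^i} \le g(T)$) rather than by simply invoking part~(i) as a black box.
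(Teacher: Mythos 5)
For part~(i), your choice of $T$ (the crossover of $\exp(n/\Upsi_i)$ with $|H|^{2^i}g(n)$) works but is a detour. The paper takes $T = f^*(\Upsi_i)$ directly, so the displayed bound on $|H|$ becomes, verbatim, $|H|^{2^i} < f(T)/g(T)$, and $\exp(T/\Upsi_i) = f(T)$ by definition of $f^*$. Then the middle regime $\cff(i)\le n\le T$ follows from admissibility of $f$ alone, the large regime $n\ge T$ from monotonicity of $f/g$, and the auxiliary monotonicity of $\psi(x)=x/\Upsi_i-\log g(x)$ is never needed. Your version needs it, and it in turn requires $g$ itself to be admissible --- not among the corollary's stated hypotheses, though it is available in the Lemma~\ref{l:postponed} setting where the corollary is used.

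For part~(ii) you correctly identify the key obstruction --- $f^{2/3}/g$ need not be increasing, so part~(i) cannot be fed $\tilde f=f^{2/3}$ as a black box --- but your proposed repair has a genuine gap. You want to use $f(n)^{2/3}/g(n)\ge g(n)$ and close with the side condition $|H|^{2^i}\le g(T)$, but that condition does not follow from $|H|<\UUU{f}{i}$: the hypothesis gives only $|H|^{2^i}<f(T')^{1/3}$ at $T'=f^*(\tfrac{2}{3}\Upsi_i)$, and since $g(T')\le f(T')^{1/3}$ as well, the quantities $|H|^{2^i}$ and $g(T')$ are simply incomparable. The increasing minorant that does work is $f^{1/3}$, not $g$: from $g\le f^{1/3}$ (which is what the paper's hint ``$f_1(T)/g(T)\ge f_1(T)^{2/3}$'' encodes) one gets $f(n)^{2/3}/g(n)\ge f(n)^{1/3}\ge f(T')^{1/3}>|H|^{2^i}$ for all $n\ge T'$, and the large regime closes. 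Your computation that the part-(i) threshold for $\tilde f$ dominates $\UUU{f}{i}$ under $f\ge g^3$ is correct; only the large-range monotonicity step needs $f^{1/3}$ in place of $g$.
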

\begin{proof}
For the first part, compute the point~$T$ where the graph of
$\left(f_1\right)^{2/3}$ intersects with $\exp(n/\Upsi )$.

By Lemma~\ref{l:upper_bound_for_Lambda},
if $|H| ^{2^i} \leq \frac{f_1(T)}{g(T)}$, then the growth
of $\Laoi(H)$ is slower than $\gaga_{T}$, which is less than~$f$.
%
The second part uses the estimate
$\frac{f_1(T)}{g(T)} \ts \geq \ts \bigl[f_1(T)\bigr]^{2/3}$.
\end{proof}


This following result is a strengthening of Corollary~\ref{cor:size_of_balls}.

\begin{cor}
\label{cor:lower_bound_for_H}
If $H = \PSL_2(\zz_N)$ with the generating set from
Lemma~\ref{l:gen_set_in_some_groups} and
$$
|H| \. > \. \LLL{f}{i} \.= \.
\exp\left[ \frac{f^*(\ddd'_i)}{2^i \ddd'_i}\right]\ts,
$$
then $f(n) < \ga_{\lao}(n)$ for some~$n$.
\end{cor}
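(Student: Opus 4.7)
The plan is to combine the lower bound on ball sizes inside $F^i_\om(H)$ provided by Corollary~\ref{cor:size_of_balls} with the general fact that the growth of a direct product of marked groups dominates the growth of each factor, and then to unwind the definition of $f^*$ so that the stated lower bound on $|H|$ exactly matches the threshold we need.

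First I would note that $\Laoi(H) = F^i_\om(H) \op \bG_\om$ comes, by construction of the direct product of marked groups, with a canonical epimorphism onto $F^i_\om(H)$ sending generators to generators. By Lemma~\ref{l:balls_in_products}~(i), this gives
$$
\ga_{\lao}(n) \. \geq \. \ga_{F^i_\om(H)}(n) \quad \text{for all } n\in\nn.
$$
Now take $H = \PSL_2(\zz_N)$ with the generating set from Lemma~\ref{l:gen_set_in_some_groups}; Corollary~\ref{cor:size_of_balls} then yields
$$
\ga_{F^i_\om(H)}(n) \. > \. e^{\ts n/\ddd'_i} \quad \text{for every } n < 2^{\ts i}\ts \ddd'_i \log|H|.
$$

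The remaining task is to exhibit one such~$n$ for which $e^{n/\ddd'_i}$ already exceeds $f(n)$. The inequality $e^{n/\ddd'_i} > f(n)$ is equivalent to $n/\log f(n) > \ddd'_i$, i.e.\ to $\Phi(n) > \ddd'_i$ where $\Phi(x) = x/\log f(x)$ is the increasing function used to define $f^*$. Since $f$ is admissible, $\Phi$ is monotone and hence invertible, and the condition becomes simply $n > \Phi^{-1}(\ddd'_i) = f^*(\ddd'_i)$. Thus any integer in the interval
$$
\bigl(\ts f^*(\ddd'_i)\ts,\ts 2^{\ts i}\ts \ddd'_i \log|H|\ts\bigr)
$$
will satisfy both $\ga_{\lao}(n) > e^{n/\ddd'_i}$ and $e^{n/\ddd'_i} > f(n)$.

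Such an $n$ exists precisely when the interval is nonempty, i.e.\ when $2^{i}\ddd'_i \log|H| > f^*(\ddd'_i)$, which rearranges to $|H| > \exp\bigl[f^*(\ddd'_i)/(2^{i}\ddd'_i)\bigr] = \LLL{f}{i}$, exactly the hypothesis of the corollary. There is no real obstacle here: the only thing that requires care is the bookkeeping that translates the exponential lower bound $e^{n/\ddd'_i}$ on $\ga_{\lao}(n)$ into the explicit threshold $\LLL{f}{i}$ via the inverse function~$f^*$, and this is the content of the definition of $f^*$ itself.
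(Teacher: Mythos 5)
Your proof is correct and follows essentially the same approach as the paper's: both use Corollary~\ref{cor:size_of_balls} (combined, explicitly or implicitly, with the epimorphism $\Laoi(H)\toto F^i_\om(H)$) to get the exponential lower bound on $\ga_\lao$, then unwind the definition of $f^*$ to see that the hypothesis on $|H|$ is precisely what makes the exponential bound applicable and dominating at a suitable radius. Your version is slightly more careful, taking $n$ strictly larger than $f^*(\ddd'_i)$ to get the strict inequality $e^{n/\ddd'_i}>f(n)$, whereas the paper evaluates at $n=f^*(\ddd'_i)$ itself; this is a cosmetic improvement, not a different argument.
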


\begin{proof}
By Corollary~\ref{cor:size_of_balls} $\ga_{\lao}(n) \geq \exp(n/\ddd'_i)$
for $n \leq 2^i \ddd'_i \log |H|$. For $n= f^*(\ddd_i)$ the bound is the same as $f(n)$,
but we can apply the estimate only if $|H| > \exp(n / 2^i \ddd'_i)$.
\end{proof}
\begin{proof}[Proof of Lemma~\ref{l:postponed} for large gap]
Let the functions $f_1$ and $f_2$ satisfy
\begin{equation}
\tag{big gap}
\label{eq:big_gap}
\frac{\log g(n)}{n^2}
f_1^*\left(\frac{n}{ C \log g(n)}\right)
>
\frac{C}{n^2}{f_2^*(C n)},
\end{equation}
for any constant $C$ and any sufficiently large $n$.
Notice that, up to a constants, both  $\cff(i)$ and $\ddd'_i$ are
equal to $2^i$. Substituting $\vs=2^i$ one gets
$$
\LLL{f_2}{i}  \approx
\exp \left[ \frac{C_1}{\vs^2} f_2^*(C_2\vs)\right]
$$
where $C_1$ and $C_2$ are universal constants.
Similarly,
$$
\UUU{f_1}{i} \approx
\exp \left[
\frac{\log g(\vs)}{C_3\vs^2}
f^*\left(\frac{\vs}{C_4\log g(\vs)}\right)
\right].
$$

If the functions $f_1$ and $f_2$ satisfy the equation~(\ref{eq:big_gap})
then  both $\UUU{f_1}{i}/\LLL{f_2}{i}$ and $\UUU{f_1}{i}$
then to $\infty$ as $i$ increases. Therefore, there exists~$i_0$, such that
for $i>i_0$ we have:
$$
\bigl[f_1\bigl(\cff(i)\bigr)\bigr]^{1/3} > L\ts, \quad  \UUU{f_1}{i}/\LLL{f_2}{i} > 10\ts,
\quad \UUU{f_1}{i} > 1000.
$$

Under these conditions there exists a prime $p_i=1 \mod 4$
such that $\UUU{f_1}{i} > \PSL_2(p_i) > \LLL{f_2}{i}$
because the above conditions translate to
$$ A_i > p_i > B_i$$
where $A_i/B_i > 2$ and $A_i > 13$, which allows us to apply Bertrand's postulate.

Corollary~\ref{cor:upper_bound_for_H} implies that the growth
$\ga_\laoi=\ga_{\Laoi(H_i)}$ of group $\Laoi(H_i)$, where $H_i=\PSL_2(p_i)$,
is slower than $f_1$.   Therefore
$$
L \ga_\laoi (n) \. < \. L f_1(n)^{2/3} \. < \. f_1(n)
\quad \mbox{for all } \ n \geq \cff(i).
$$

Also Corollary~\ref{cor:lower_bound_for_H}
that the growth of is not slower than $\Laoi(H_i)$, i.e.,
$$
\ga_{\laoi} (n) \. > \. f_2(n)
\quad \mbox{for some } \ n \geq \cff(i).
$$

Therefore the group $H_i = \PSL_2(p_i)$ has all necessary properties.
\end{proof}

\subsection{Details: small gap}
Unfortunately, if $f_1 \sim f_2$\ts, then the gap between the functions $f_1$
and~$f_2$ is not
sufficiently big for the above argument to work.  From this point on, we
assume that the functions $f_1,f_2: \nn \to \nn$ satisfy the following
conditions:
\begin{itemize}
\item $f_1(n) > \left(f_2(n)\right)^3$
\item $f_2(n) \geq \ga_{\bG_\om}(n)$
\end{itemize}

\smallskip

{\small
\begin{remark}
Corollary~\ref{cor:upper_bound_for_H} gives that (for a fixed $i$)
if the size of $H$ is small then $H \in \Dsi$.
In particular by Lemma~\ref{gen_set_in some_groups}
the class $\Dsi$ contains the groups $\PSL_2(p)$
for $p^3/2< \UUU{f_2}{i}$ and $p=3 \mod 4$.
\end{remark}
}

\begin{lemma}
\label{l:products_are_small}
If $H_1,H_2 \in \Dsi$, then $H_1 \op H_2$ is not in $\Dgi$.
\end{lemma}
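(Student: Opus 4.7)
The plan is to reduce the claim to a product estimate on balls. The first step is to observe that the functor $F^i_\omega$ commutes with marked-group products (Proposition~\ref{pro:F_preserves_products}), so
\[
\Laoi(H_1\op H_2) \. = \. F^i_\omega(H_1\op H_2)\op \bG_\om \. = \. \bigl(F^i_\om(H_1)\op F^i_\om(H_2)\bigr)\op \bG_\om.
\]
Mapping $H_1\op H_2\toto H_j$ and applying functoriality of $F^i_\om$ together with the universal property (Remark~\ref{re:universal}) yields canonical marked-group epimorphisms $\Laoi(H_1\op H_2)\toto \Laoi(H_j)$ for $j=1,2$. Pairing these gives a marked-group homomorphism
\[
\Phi \colon \Laoi(H_1\op H_2) \. \longrightarrow \. \Laoi(H_1)\. \times \.\Laoi(H_2).
\]

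Next I would show that $\Phi$ is injective. An element of $\Laoi(H_1\op H_2)$ sits inside $F^i_\om(H_1)\oplus F^i_\om(H_2)\oplus \bG_\om$, and lies in the kernel of the first projection precisely when its $F^i_\om(H_1)$- and $\bG_\om$-coordinates vanish, and similarly for the second projection. Intersecting the two kernels forces all three coordinates to be trivial, proving injectivity. Since $\Phi$ sends generators to generators, it maps the ball of radius $n$ into the product of balls of radius $n$, giving
\[
\ga_{\Laoi(H_1\op H_2)}(n) \.\le\. \ga_{\Laoi(H_1)}(n)\.\cdot\.\ga_{\Laoi(H_2)}(n),
\]
which is just the product estimate of Lemma~\ref{l:balls_in_products}(ii) applied in this setting.

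Finally I would plug in the hypothesis $H_1,H_2\in\Dsi$, which gives $\ga_{\Laoi(H_j)}(n) < f_2(n)$ for all $n>\cff(i)$. Combined with the above inequality this yields
\[
\ga_{\Laoi(H_1\op H_2)}(n) \.<\. f_2(n)^2 \qquad\text{for all } n>\cff(i).
\]
Since by standing hypothesis $f_1(n)>f_2(n)^3$, we have $f_2(n)^2 < f_1(n)^{2/3}$, and therefore $f_1(n)^{2/3} > \ga_{\Laoi(H_1\op H_2)}(n)$ for all $n>\cff(i)$, i.e.~$H_1\op H_2\notin \Dgi$, as required.

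The main conceptual step is the identification $\Laoi(H_1\op H_2)\cong \Laoi(H_1)\op \Laoi(H_2)$ via the universal property, so that the crude submultiplicative ball bound from Lemma~\ref{l:balls_in_products} is applicable. Once this is in place, the conclusion is arithmetic: the gap $f_1>f_2^3$ is precisely calibrated so that squaring a function bounded by $f_2$ stays below $f_1^{2/3}$. No estimates on the intermediate-range growth of $\Laoi$ are needed for this lemma; the subtlety has been deliberately pushed into the cube exponent in the hypothesis.
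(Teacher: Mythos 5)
Your proof is correct and follows essentially the same route as the paper: reduce via Proposition~\ref{pro:F_preserves_products} to a product of marked groups, apply the submultiplicative ball bound of Lemma~\ref{l:balls_in_products}(ii), and then use the cube gap $f_1>f_2^3$. One small remark: you are actually more careful than the paper's own write-up in the final step. The paper's displayed chain stops at $f_2(n)\cdot f_2(n)<f_1(n)$, but since $\Dgi$ is defined by comparison with $f_1^{2/3}$ rather than $f_1$, the needed inequality is $f_2(n)^2<f_1(n)^{2/3}$, which you correctly extract from $f_1>f_2^3$. Your re-derivation of injectivity of $\Phi$ is fine, though it is just Lemma~\ref{l:balls_in_products}(ii) applied to $\Laoi(H_1)\op\Laoi(H_2)$, which you could have cited directly.
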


\begin{proof}
Proposition~\ref{pro:F_preserves_products} implies that
$$
\Laoi(H_1 \op H_2) \, = \,
\Laoi(H_1) \op \Laoi(H_2)\ts.
$$
By Lemma~\ref{l:product}
$$
\ga_{\Laoi(H_1 \op H_2)}(n) \. \leq \.
\ga_{\Laoi(H_1)}(n) \cdot \ga_{\Laoi(H_2)}(n)
\. \leq \. f_2(n) \cdot f_2(n) \. < \. f_1(n)\ts,
$$
i.e., the growth of $\Laoi(H_1 \op H_2)$ is slower than $f_1$ and
the group $H_1 \op H_2$ is not in $\Dgi$.
\end{proof}

\begin{cor}
\label{cor:closed_under_prod}
If the set $\Deqi$ is empty, then $\Dsi$ is closed under~$\bp$.
\end{cor}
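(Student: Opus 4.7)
The plan is to observe that this is an essentially immediate logical consequence of the preceding Lemma~\ref{l:products_are_small} once we note that the three classes $\Dsi$, $\Dgi$, $\Deqi$ form a partition of all marked groups~$H$ (with $\cG \toto H$). Indeed, from the definitions: either $\ga_{\lao}(n) < f_2(n)$ for all $n > \cff(i)$, in which case $H \in \Dsi$; or there exists some $n > \cff(i)$ with $\ga_{\lao}(n) \ge f_2(n)$, and then we split according to whether $\ga_{\lao}$ ever reaches $f_1(n)^{2/3}$ (putting $H$ in $\Dgi$) or not (putting $H$ in $\Deqi$).

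Given $H_1, H_2 \in \Dsi$, the content of Lemma~\ref{l:products_are_small} is exactly that $H_1 \op H_2 \notin \Dgi$. Combined with the partition above, this means
\[
H_1 \op H_2 \, \in \, \Dsi \cup \Deqi.
\]
Under the hypothesis that $\Deqi = \nothing$, the only remaining possibility is $H_1 \op H_2 \in \Dsi$, which is the desired closure property. The argument extends immediately from pairs to arbitrary finite $\op$-products (since $\op$ is associative on marked groups), and hence gives closure under $\bp$ of finitely many factors in $\Dsi$.

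There is no real obstacle here; the statement is a formal corollary, and the only thing to verify carefully is the partition claim, which is immediate from unpacking the three definitions. The substance of the corollary lies entirely in Lemma~\ref{l:products_are_small}, which in turn rests on Proposition~\ref{pro:F_preserves_products} and the submultiplicative bound $\ga_{G_1\op G_2}(n) \le \ga_{G_1}(n)\cdot\ga_{G_2}(n)$ from Lemma~\ref{l:balls_in_products}. The real work will occur later, when one leverages this closure to build a product $H = H_1 \op H_2 \op \cdots$ in $\Dsi$ whose size exceeds the bound $\UUU{f_1}{i}$ of Corollary~\ref{cor:upper_bound_for_H}, producing the contradiction that forces $\Deqi \ne \nothing$ and thereby establishes Lemma~\ref{l:postponed} in the small-gap regime.
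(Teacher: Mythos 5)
Correct, and this matches what the paper intends: the corollary is stated without an explicit proof, as an immediate consequence of Lemma~\ref{l:products_are_small}, and your trichotomy argument (that $\Dsi$, $\Dgi$, $\Deqi$ together exhaust all marked groups, so $H_1 \op H_2 \notin \Dgi$ plus $\Deqi = \nothing$ forces $H_1 \op H_2 \in \Dsi$) is exactly that implicit step. Your remark that closure under $\bp$ means closure under \emph{finite} products, extended from pairs by associativity, is also the right reading and matches the only subsequent use in Corollary~\ref{cor:prime-number}, where $N$ is a finite product of primes.
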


\begin{cor}\label{cor:prime-number}
If the set $\Deqi$ is empty, then $\Dsi$ contain $\PSL_2(\zz_N)$,
where $N$ is the product of all primes~$p=3$~mod~$4$,
such that $p^3/2 < \UUU{f_1}{i}$.  
\end{cor}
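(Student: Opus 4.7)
The plan is straightforward: combine the trichotomy structure $\Dsi, \Dgi, \Deqi$ with the product-closure from Corollary~\ref{cor:closed_under_prod} and the product formula from Remark~\ref{r:product}. Assuming $\Deqi = \nothing$, every finite marked group falls into either $\Dsi$ or $\Dgi$, so membership in $\Dsi$ is equivalent to not lying in $\Dgi$. I will therefore first build a \emph{pool} of small prime-indexed groups $\PSL_2(p)$ which are known to be in $\Dsi$, and then use product closure to paste them together into $\PSL_2(\zz_N)$.

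First, I would fix a prime $p \equiv 3 \bmod 4$ with $p^{3}/2 < \UUU{f_1}{i}$. Since $|\PSL_2(p)| = p(p^{2}-1)/2 \le p^{3}/2 < \UUU{f_1}{i}$, Corollary~\ref{cor:upper_bound_for_H}(ii) applied with $f = f_1$ gives $f_{1}(n)^{2/3} > \ga_{\lao}(n)$ for all $n \ge \cff(i)$, and hence $\PSL_2(p) \notin \Dgi$. By the assumed emptiness of $\Deqi$ and the fact that $\Dsi, \Dgi, \Deqi$ partition the finite marked groups of this form, we conclude $\PSL_2(p) \in \Dsi$. This takes care of every individual prime factor appearing in $N$.

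Next, I would invoke Corollary~\ref{cor:closed_under_prod}, which (again using $\Deqi = \nothing$) tells us that $\Dsi$ is closed under the product $\bp$ of marked groups. Iterating this closure over all primes $p \equiv 3 \bmod 4$ with $p^{3}/2 < \UUU{f_1}{i}$, and applying Remark~\ref{r:product}, I obtain
\[
\bp_{p} \PSL_2(p) \. = \. \PSL_2(\zz_N) \. \in \. \Dsi,
\]
where $N$ is the product of these primes. This is precisely the claim of the corollary.

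The routine nature of the argument aside, the only place where one must be slightly careful is matching the definitions: Corollary~\ref{cor:upper_bound_for_H}(ii) is applied with $f_1$ (not $f_2$), which is exactly what rules out membership in $\Dgi$ rather than directly placing the group in $\Dsi$. The extra input needed to convert ``$\notin \Dgi$'' into ``$\in \Dsi$'' is precisely the hypothesis $\Deqi = \nothing$, so the hypothesis is used in two distinct places -- once for each individual prime factor, and once again (as closure under $\bp$) for the product. There is no real obstacle; the work was done in Lemma~\ref{l:products_are_small} and Corollary~\ref{cor:upper_bound_for_H}.
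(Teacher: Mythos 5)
Your argument is correct and coincides with the paper's own proof: Corollary~\ref{cor:upper_bound_for_H} rules out $\Dgi$ for each small $\PSL_2(p)$, the emptiness of $\Deqi$ then forces membership in $\Dsi$, and Corollary~\ref{cor:closed_under_prod} together with Remark~\ref{r:product} assembles the product of these groups into $\PSL_2(\zz_N)\in\Dsi$. (As a side remark unrelated to the quality of your argument: the condition $p\equiv 3\bmod 4$, which both you and the paper use in this corollary and in Remark~\ref{r:product}, is inconsistent with Lemma~\ref{l:gen_set_in_some_groups}, which requires $-1$ to be a square modulo $N$, i.e.\ $p\equiv 1\bmod 4$; this appears to be a typo inherited from the source.)
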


\begin{proof}
Corollary~\ref{cor:upper_bound_for_H} says that if $\PSL_2(p)$ is not inside
$\Dgi$ if $|\PSL_2(p)| = p^3/2 (1+o(1)) < \UUU{f_1}{i}$.
The previous Corollary and Remark~\ref{r:product} finish the proof.
\end{proof}

\begin{cor}
\label{cor:large_groups_in_D-}
If the set $\Deqi$ is empty, then $\Dsi$ contain $\PSL_2(\zz_N)$
$$
\log N \. \approx
\. \NNN{f_1}{i} = \frac{1}{2}\.\bigl[2 \UUU{f_1}{i}\bigr]^{1/3} \ts.
$$
\end{cor}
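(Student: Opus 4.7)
The plan is to invoke Corollary~\ref{cor:prime-number} directly and then estimate $\log N$ for the resulting modulus~$N$ using elementary analytic number theory. Under the assumption that $\Deqi$ is empty, Corollary~\ref{cor:prime-number} gives that $\PSL_2(\zz_N) \in \Dsi$, where $N$ is the product of all primes $p \equiv 3 \pmod 4$ satisfying $p^3/2 < \UUU{f_1}{i}$. Setting $x = \bigl[2\ts\UUU{f_1}{i}\bigr]^{1/3}$, the condition on $p$ rewrites simply as $p < x$, so
$$
\log N \. = \. \sum_{\substack{p \le x \\ p \equiv 3 \,(\mathrm{mod}\, 4)}} \log p\ts.
$$

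The second step is to apply the prime number theorem in arithmetic progressions, in its Chebyshev form: for a fixed modulus $q$ and residue $a$ coprime to~$q$,
$$
\theta(x;q,a) \. := \. \sum_{\substack{p \le x \\ p \equiv a \,(\mathrm{mod}\, q)}} \log p \. \sim \. \frac{x}{\varphi(q)} \quad \text{as } x \to \infty.
$$
Taking $q = 4$, $a = 3$, and noting that $\varphi(4) = 2$, this yields
$$
\log N \. \sim \. \frac{x}{2} \. = \. \frac{1}{2}\bigl[2\ts\UUU{f_1}{i}\bigr]^{1/3},
$$
which is precisely the claimed value $\NNN{f_1}{i}$. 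Since the statement is formulated with the informal symbol $\approx$, the asymptotic equivalence as $i \to \infty$ (equivalently, as $\UUU{f_1}{i} \to \infty$) suffices.

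The only thing to check for completeness is that $\UUU{f_1}{i} \to \infty$ as $i \to \infty$, so that the asymptotic $\theta(x;4,3) \sim x/2$ actually applies; this follows from the admissibility of $f_1$ together with the definition $\UUU{f_1}{i} = \exp\bigl[f_1^*(\tfrac{2}{3}\Upsi_i)/(2^{i+1}\Upsi_i)\bigr]$ and the fact that in the regime relevant to Lemma~\ref{l:postponed} we already assume $\UUU{f_1}{i} > 1000$ (and in fact growing without bound). I do not anticipate any serious obstacle: the combinatorial content is entirely in Corollary~\ref{cor:prime-number}, and the remainder is a standard Dirichlet-density computation. The only mild subtlety is that one cannot use Bertrand-type single-prime arguments here, since we need the contribution of \emph{all} primes $\equiv 3 \pmod 4$ up to $x$; but this is exactly what the prime number theorem in arithmetic progressions delivers.
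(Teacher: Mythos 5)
Your proposal is correct and follows essentially the same route as the paper: invoke Corollary~\ref{cor:prime-number} and then estimate $\log N$ via the prime number theorem in arithmetic progressions (the paper cites Dirichlet's theorem together with a reference for the product of primes, but this is the same computation as your $\theta(x;4,3)\sim x/2$). You have merely supplied the standard details that the paper declares easy and omits.
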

\begin{proof}
This follows easily from the Dirichlet theorem on the
distribution of primes (mod~$4$), and a calculation of the product of
primes given in~\cite{Ruiz}.  We omit the (easy) details.
\end{proof}


\begin{proof}[Proof of Lemma~\ref{l:postponed}]
The idea is the same as in the case of large gap between $f_1$ and $f_2$.
If one assumes that $\Deqi$ is empty the one can use Corollary~\ref{cor:large_groups_in_D-} to construct groups in $\Dsi$.

Again, up to a constants, both  $\cff(i)$ and $\ddd'_i$ are
equal to $2^i$. Substituting $\vs=2^i$ one gets
$$
\LLL{f_2}{i}  \approx
\exp \left[ \frac{C_1}{\vs^2} \, f_2^*(C_2\vs)\right],
$$
where $C_1$ and $C_2$ are universal constants.
Similarly
$$
\UUU{f_1}{i} \approx
\exp \left[
\frac{\log g(\vs)}{C_3\vs^2}
f^*\left(\frac{\vs}{C_4\log g(\vs)}\right)
\right]
\quad \mbox{and}
$$
$$
\NNN{f_1}{i} \approx
\exp \left( \exp \left[
\frac{\log g(\vs)}{C_5\vs^2}
f^*\left(\frac{\vs}{C_4\log g(\vs)}\right)
\right]\right).
$$
The condition~(\ref{l:postponed:con:technical}) implies that
$$
\NNN{f_1}{i}/\LLL{f_2}{i} \to \infty  \quad \text{and} \ \
\UUU{f_1}{i} \to \infty\ts,  \ \ \ \text{as} \ \ i \to \infty\ts.
$$
Therefore, there exists $i_0$, such that for $i>i_0$ we have:
$$\bigl[f_1\bigl(\cff(i)\bigr)\bigr]^{1/3} > L\ts, \quad \NNN{f_1}{i}/\LLL{f_2}{i} > 2\ts, \quad
\UUU{f_1}{i} > 1000.$$
However, Corollaries~\ref{cor:lower_bound_for_H} and~\ref{cor:large_groups_in_D-}
imply that if $\Deqi=\emp$, then the group $\PSL_2(N)$ is neither
in~$\Dsi$ nor in~$\Dgi$, a contradiction.
This implies that  $\Deqi$ is not empty, and therefore
there exits a group~$H$ with the desired properties.
\end{proof}

\smallskip

\section{Generalizations of the Construction}
\label{s:gen}

\subsection{} Suppose $G$ acts on a set $X$ by $H \wr_X G$; we denote
the restricted wreath product $G \ltimes \bigoplus_{i\in X} H$.
One easy modification of our construction is to use the permutation
wreath product $P \wr_X \bG_\omega$ as groups at infinity, where $P$
is a finite group and $X$ is an orbit in the action of $\bG_\om$ on
the boundary of the binary tree~$\rT_2$.  The advantage of using
these groups is that  (unlike the the groups $\bG_\omega$)
their growth rate is known in some cases, see~\cite{BE}.

\begin{thm}
\label{t:generalization}
The same as Theorem~\ref{t:main} but we use the growth of the group
$\zzz \wr_X \bG_\om$ instead the group $\bG_\om$. Here $X$ is the
boundary of the binary tree~$\rT_2$.
\end{thm}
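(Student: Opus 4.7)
The plan is to imitate the architecture of the proof of Theorem~\ref{t:main} line-by-line, replacing the role of $\bG_\om$ throughout by the permutational wreath product $\Omega := \zzz \wr_X \bG_\om$. In particular, conditions~(\ref{t:main:con:restriction}) and~(\ref{t:main:con:lower}) become $g_2 \leq \ga_\Omega$ and $\ga_\Omega = o(g_1)$, and the conclusion $h \geq g_2$ now reflects that $\Omega$ is a quotient of the product-of-marked-groups group~$\Ga$ via the Chabauty-limit mechanism of Lemma~\ref{l:product}.

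First I would replace the functors $F^i_\om$ by modified functors $\wh F^i_\om$ whose natural direct limit is $\Omega$ rather than $\bG_\om$. Concretely, $\wh F^i_\om(H)$ should be a marked subgroup of the iterated wreath product $H \wr_{X_i}\bigl(\zzz \wr_{X_i} \bG_{\om,i}\bigr)$, where $X_i \subset \rT_2$ is the orbit at depth $i$; the four generators are defined by the same recursion as $A,B,C,D$ in Definition~\ref{d:twist}, with an additional $\zzz$-tag toggled at each leaf on which a generator acts nontrivially. Chabauty convergence $\wh F^{m_i}_\om(H_i) \to \Omega$ for $m_i \to \infty$ is then straightforward, and the overall construction becomes $\Ga = \bp_i \wh F^{m_i}_\om(H_i)$ for appropriately chosen sequences $\{m_i\}$ and $\{H_i\}$.

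Next I would verify the analogues of the contraction Lemma~\ref{l:contracting_property_of_G}, the kernel Lemma~\ref{l:diam_of_F^i}, and Corollaries~\ref{cor:weak_lower_bounds_for_balls}--\ref{cor:lower_bounds_for_balls}. The $\zzz$-tags at the $2^i$ leaves contribute only a $\zzz^{\oplus 2^i}$-factor to the kernel of $\wh F^i_\om(H) \toto \bG_{\om,i}$ and only a bounded overhead in word length, so the kernel is still normally generated by a word of length $O(2^i)$ and the lower bounds on intermediate-radius balls are unchanged up to harmless constants. Crucially, Proposition~\ref{pro:F_preserves_products} extends to $\wh F^i_\om$, since the wreath-product construction is functorial in the innermost group. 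Once this machinery is in place, I would rerun Sections~\ref{s:control} and~\ref{s:lemma-postponed} with $\ga_{\bG_\om}$ replaced by $\ga_\Omega$ in every estimate: the upper bound of Lemma~\ref{l:upper_bound_for_Lambda} (submultiplicativity of growth) and the lower bound of Corollary~\ref{cor:size_of_balls} (from expansion in $\PSL_2(\zz_N)$) depend only on the growth of the limit group and not on its internal structure, so both the large-gap and small-gap cases of the proof of Main Lemma~\ref{l:postponed} go through without further modification.

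The main obstacle I expect is the careful bookkeeping to check that the $\zzz$-decorations at every leaf do not obstruct normal generation of~$H_i$ by a lift of~$r_{x_{i+1}}$ inside $\wh F^i_\om(H_i)$, and that they do not break the commutation of $\wh F^i_\om$ with the product~$\bp$. Any stray $\zzz$-tags created by the decoration can presumably be killed by one additional short commutator in the definition of the test word~$r$, at a uniformly bounded cost in word length; this would preserve property~(\ref{l:gen_set_in_some_groups:con:simple}) of Lemma~\ref{gen_set_in some_groups} and allow the rest of the proof to proceed as a mechanical translation of the argument for Theorem~\ref{t:main}.
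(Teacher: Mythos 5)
Your overall strategy is the right one and matches the paper's: modify the functor so that the Chabauty limit becomes $\Omega=\zzz\wr_X\bG_\om$ instead of $\bG_\om$, verify that the contraction/kernel lemmas and the $\PSL_2$ expansion bounds survive the modification, and then rerun Sections~\ref{s:control}--\ref{s:lemma-postponed} with $\ga_{\bG_\om}$ replaced by $\ga_\Omega$. The paper's proof is also only an outline, and your claim that Lemma~\ref{l:upper_bound_for_Lambda} and Corollary~\ref{cor:size_of_balls} depend only on the growth of the limit group is correct.

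However, there is a concrete gap in the way you set up the modified functor. You propose to keep the four generators $A,B,C,D$ and absorb the $\zzz$-decoration into them (``an additional $\zzz$-tag toggled at each leaf on which a generator acts nontrivially''), so that $\wh F^i_\om(H)$ remains a $4$-generated quotient of $\cG$. This cannot produce $\Omega$ as a limit. Indeed $\cG^{ab}\cong\zzz^3$, whereas $\Omega=\zzz\wr_X\bG_\om$ has abelianization $\bG_\om^{ab}\times\bigl(\zzz^{(X)}\bigr)_{\bG_\om}\cong\zzz^3\times\zzz\cong\zzz^4$ (the coinvariants of the permutation module over the transitive $\bG_\om$-action on~$X$ give an extra $\zzz$). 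So $\Omega$ is not a marked quotient of $\cG$ at all, and no redefinition of the \emph{four} generators $A,B,C,D$ by $\zzz$-tags can fix this. Worse, any such redefinition that preserves the relations $a^2=b^2=c^2=d^2=bcd=1$ (which you need in order for Remark~\ref{re:universal}, Proposition~\ref{pro:F_preserves_products}, and the $\bp$-machinery to apply) forces the image to stay inside a proper subgroup, so the limit would not even surject onto~$\Omega$.

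The fix, and what the paper actually does, is to enlarge the generating set: replace $\cG$ by the free product $\cG\ast\zzz$ (five involutive generators $a,b,c,d,g$), extend the functor by setting $F_x(g)=(1;1,g)$, use $\zzz$ in place of the trivial group~$\mathbf 1$ at the base of the recursion, and correspondingly modify the finite groups $H_i$ of Lemma~\ref{gen_set_in some_groups} so that they carry a fifth marked generator (e.g.\ replace $\PSL_2(\zz_N)$ by a suitable marked extension containing a designated copy of~$\zzz$). Once the fifth generator is present, the $\zzz^{\oplus 2^i}$-kernel bookkeeping and the normal-generation by a lift of~$r_{x_{i+1}}$ that you describe work essentially as you anticipate, and the rest is the mechanical translation you outline. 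So the gap is not in the estimates but in the choice of marked category: you must move from $4$-generator marked groups under $\cG$ to $5$-generator marked groups under $\cG\ast\zzz$.
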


\begin{proof}[Outline of the proof]
Here is the list of changes we need in the construction.
\begin{itemize}
\item instead of the group $\cG$, consider the free product
$\cG \ast \zz_2$,
\item modify the functors $F_i$ to include the extra generators, by adding
$G = (1;1,g)$ for every $g\in \zz_2$,
\item use $\zzz$ in place of the trivial group $\mathbf{1}$,
\item use the limit of the groups $F^i_{\omega}(\zzz)$ is $\zzz \wr_X \bG_\om$,
\item change groups $H_i$ constructed in Lemma~\ref{gen_set_in some_groups},
to contain the group~$\zzz$.
\end{itemize}

\noindent
The rest of the proof follows verbatim.  We omit the details.
\end{proof}

\subsection{}
It is easy to see that the growth types of the groups $P \wr_X G$
for fixed $G$ and different $P$ are the same if $P$ is finite and nontrivial.
Thus in the theorem one can replace $\zzz \wr_X \bG_\om$
with $P \wr_X \bG_\om$ for any finite group~$P$.

It seems possible to extend this result to wreath products of the forms
$P \wr_X \bG_\om$ where the group $P$ is not finite, but one needs a sofic
approximation (a sequence of finite groups $\{P_i\}$ which converge to~$P$)
of the group $P$ instead. The above outline need to be modified to by replacing
$\zzz$ with~$P_i$.


\subsection{}
Another easy generalization direction is to use the groups constructed in~\cite{Seg}
instead of the Grigorchuk groups $\bG_\omega$. However this will make the words
needed in Lemma~\ref{l:diam_of_F^i} and~\ref{gen_set_in some_groups}
not so explicit, but it is clear that such words exist.
In fact, as far as we are aware the
growth of these groups has not been studied and it is not clear if there
any examples of this type which are of intermediate growth.


\subsection{}
It would be interesting to analyze for which groups~$\bG$ and subexponential
functions~$f$, there exists a sequence of finite groups $G_i$ which converge
to~$\bG$ and the growth of $\bp G_i$ oscillates between $\ga_{\bG}$ and the
function~$f$.
Theorem~\ref{t:main} shows that this is possible if $\bG$ is the
Grigorchuk group $\bG_\om$ and
Theorem~\ref{t:generalization} if $\bG$ is a wreath
product of $\bG_\om$ with a finite group.
We believe that for any group $\bG$ of intermediate growth such sequence
exists, provided that the gap between the growth of $\bG$ and~$f$ is
sufficiently large:

\begin{conjecture}
\label{conjecture}
For every group $G$ of intermediate growth and a subexponential function~$f(n)$
which grows sufficiently fast (depending on $\ga_G$), there exists a sequence of
finite groups $\{G_i\}$, such that
\begin{enum1}
\item $\lim G_i = G$, and
\item the growth of $\, \Ga = \bp G_i$ oscillates between $\ga_G$ and~$f$.
\end{enum1}
\end{conjecture}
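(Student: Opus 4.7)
The plan is to mirror the construction of the Main Theorem but with $\bG_\om$ replaced by an abstract $G$ of intermediate growth. The argument in the paper rests on two concrete inputs: (a) a "self-similar" sequence of finite quotients $\bG_{\om,i}$ of $\bG_\om$ together with a plug-in functor $F^i_\om(\cdot)$ satisfying the contracting property (Lemma~\ref{l:contracting_property_of_G}) and admitting short normal generators for the successive kernels (Lemma~\ref{l:diam_of_F^i}); and (b) an almost-continuous family of finite marked quotients with exponential and strong-normal exponential ball growth (Lemma~\ref{gen_set_in some_groups}). My strategy would be to axiomatize these two inputs into a pair of hypotheses and then check that Theorem~\ref{t:general}, the Main Theorem and Main Lemma~\ref{l:postponed} transfer to any $G$ satisfying them.

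First I would look for an abstract "level-$i$ substitution" $\Phi_i : \{\text{marked images of }G\} \to \{\text{marked images of }G\}$, with $\Phi_i(\mathbf{1}) = G_{(i)}$ for some sequence $G_{(i)}\toto G_{(i-1)}$ with $\lim G_{(i)} = G$, such that (i) the balls of radius roughly $\cff(i) = 2^i - 1$ in $\Phi_i(H)$ and in $G$ coincide, and (ii) the kernel of $\Phi_i(H)\toto G_{(i)}$ is a direct power of $H$ normally generated by a word of length $O(2^i)$ trivializing under $\Phi_{i+1}$. Once these contraction and kernel-control facts are in place, the three-regime analysis in Lemma~\ref{l:upper_bound_for_Lambda} and the classification $\Ds,\Dg,\Deq$ of Main Lemma~\ref{l:postponed} go through unchanged: the submultiplicative bound, Corollary~\ref{cor:upper_bound_for_H}, and the bootstrap through Corollary~\ref{cor:closed_under_prod} all use only $\Phi_i$ at the categorical level (Proposition~\ref{pro:F_preserves_products}) and the convergence in the Chabauty topology from Section~\ref{sec_limits}.

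Next I would produce the analogue of Lemma~\ref{gen_set_in some_groups}: a family of finite marked images of $G$ of essentially arbitrary size with exponentially growing Cayley balls and the strong normal-growth lower bound needed in Corollary~\ref{cor:lower_bounds_for_balls}. For $\bG_\om$ this family is $\PSL_2(\Z_N)$, pulled back from the embedding $\cG \to \PSL_2\bigl(\Z[\ii,1/2]\bigr)$ and certified by the Bourgain--Gamburd style expansion theorem~\cite{BG1}. In the general case I would look for a faithful linear (or residually finite) representation of the universal object mapping onto $G$, and then invoke quantitative expansion for the congruence quotients; density-type arguments (Bertrand, Dirichlet, and the prime-product estimate of~\cite{Ruiz}) already used in Corollaries~\ref{cor:prime-number} and~\ref{cor:large_groups_in_D-} would then deliver finite marked groups $H_i$ whose sizes can be chosen almost continuously.

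The main obstacle, and the reason this is posed as a conjecture rather than a theorem, is the first ingredient. The operator $F_x$ is essentially conjugation-by-wreath-product on $\Aut(\rT_2)$, and Lemma~\ref{l:contracting_property_of_G} is a direct consequence of the length-halving identity $w \mapsto (\xi^{a_w};w',w'')$ available only because $\bG_\om$ is a self-similar branch group on the rooted binary tree. A generic group of intermediate growth carries no such recursive structure; even the existence of a compatible tower of finite quotients $G_{(i)}\toto G_{(i-1)}$ with a uniform contraction rate is unclear, and the required "gap" between $\ga_G$ and $f$ in the conjecture is precisely what would have to absorb the (unknown) quality of that contraction. I expect that any serious attack on the conjecture will either need to enlarge the class of input groups to those with a weak substitute for branch structure (as hinted at by the variations in Section~\ref{s:gen}), or to develop a genuinely new, non-self-similar mechanism for producing sequences of finite groups with controlled Chabauty convergence and controlled kernel growth.
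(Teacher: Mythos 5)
This statement is a conjecture, not a theorem, and the paper offers no proof of it — only the surrounding discussion in Section~\ref{s:gen}, which notes that Theorem~\ref{t:main} and Theorem~\ref{t:generalization} establish special cases ($\bG_\om$ and wreath products $P\wr_X\bG_\om$). Your response correctly recognizes this: rather than fabricating an argument, you reverse-engineer the two structural ingredients behind the Main Theorem (a contracting substitution functor $F_x$ giving Chabauty convergence and kernel control, and a near-continuous family $\PSL_2(\zz_N)$ of finite marked quotients with expansion), abstract them into hypotheses, and then identify the genuine blocker: condition (i)/(ii) in your list exists for $\bG_\om$ only because of the self-similar branch action on $\rT_2$, and an arbitrary group of intermediate growth carries no such recursive decomposition. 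This matches the authors' framing precisely — the ``sufficiently fast'' gap in the conjecture is exactly their hedge for the unknown quality of any substitute contraction.

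One point worth adding to your analysis, because it sharpens why this must be a conjecture: Lemma~\ref{l:fp} shows that a positive answer would force every group of intermediate growth to be infinitely presented, which is itself a long-standing open problem (noted in~\cite{Gri-one,Gri5}). So the conjecture cannot be attacked by any method that sidesteps that question. This also means your proposed axiomatization would, if it ever succeeded unconditionally, simultaneously solve the finite-presentation problem; that is a useful sanity check on the difficulty of any candidate approach and explains why the authors' Section~\ref{s:gen} stops at modest extensions (wreath products, sofic approximations of $P$) rather than claiming anything close to the general case.
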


\smallskip

{\small

\section{Historical remarks and open problems}
\label{s:fin}

\subsection{} \label{ss:fin-seq}
We refer to~\cite{GP,Har1} for the introduction to groups of
intermediate growth, and to~\cite{BGS,Gri-one,Gri5,GH,GNS}
for the surveys on the subject and open problems.

Although~$\bG$ is historically first group of intermediate
growth~\cite{Gri1,Gri2}, there is now a large number of
constructions of intermediate growth \emph{branch groups}
(see~\cite{BGS}). Groups $\bG_\om$ corresponding to
infinite words $\om \in \{0,1,2\}^\infty$, were introduced by
Grigorchuk in~\cite{Gri3}.   They form a continuum family of
intermediate growth groups.  In this setting, the Grigorchuk
group $\bG = \bG_{(012)^\infty}$ corresponds to a periodic
word sequence, and is sometime called the
\emph{first Grigorchuk group}~\cite{BGS}.

\subsection{}\label{ss:fin-gri}
Let us mention that Grigorchuk's original bounds for~$\bG$ were
$\al_-(\bG) \ge 0.5$ and $\al_+(\bG) \le 0.991$.
These bounds were successively improved, with the current
records being
$$
\al_-(\bG) \ge 0.5207\ts,
\quad
\al_+(\bG) \le 0.7675\ts,
$$
where both constants correspond to solutions of certain algebraic
equations.
The bound for $\al_-$ is in~\cite{Bri} (see also~\cite{Bar2,Leo}),
and for $\al_+(\bG)$ in~\cite{Bar1,MP}.
Whether the limit $\al(\bG)$ exists remains an open problem.
However, Grigorchuk conjectures that $\al_-(\Ga) \ge \al_-(\bG)$ for
\emph{every} group of intermediate growth~\cite{Gri-one}.

\subsection{}\label{ss:fin-BE}
One of the few examples of groups of intermediate growth
where the type of the growth function is known precisely are
permutational wreath products $\Ga =P \wr_X \bG_{\om}$ where
$\om=(012)^\infty$, where $X$ is the boundary of the binary tree.
If $P$ is a finite group then
$\ga_\Ga \sim \exp(n^\alpha)$ for $\alpha = 0.7675$, if $P = \zz$
then the growth is $\ga_\Ga \sim \exp(n^\alpha \log n)$, see~\cite{BE}.

\subsection{}\label{ss:fin-free}
Free Grigorchuk group $\cG$ defined in Section~\ref{s:def},
is clearly isomorphic to a free product
$\zz_2^2 \ast \zz_2$, and thus \emph{non-amenable}.
It should not be confused with
the \emph{universal Grigorchuk group} \.
$$
\bp_\om \bG_\om \. = \. \cG/\bigcap_\om \ts\ker (\cG \toto \bG_\om)\ts,
$$
which is known to have exponential growth, and is conjectured to
be amenable \cite[$\S 8$]{Gri6}.

\subsection{} \label{ss:fin-wilson}
It is well known and easy to see~\cite{Har2}, that groups of exponential
growth cannot have oscillations:
$$
\liminf_{n\to \infty} \. \frac{\log \ga^S_{\Gamma}(n)}{n} \. = \.
\limsup_{n\to \infty} \. \frac{\log \ga^S_{\Gamma}(n)}{n}
\quad \ \text{for all} \ \<S\>=\Ga\ts.
$$
Denote this limit by $\ka(\Ga,S) > 1$.  It was recently discovered by
Wilson~\cite{Wil} that there exits groups with $\inf_S \ka(\Ga,S) = 1$
(see also~\cite{Bri1}).

\subsection{}
We conjecture that condition~(\ref{t:main:con:growth}) in the Main Theorem
can be weakened to
\smallskip

\quad (\ref{t:main:con:growth}$'$) \ $\displaystyle
\frac{\log g_1(n)}{n^2} \. \cdot \.
f_1^*\left(\frac{n}{\log g_1(n)}\right) \. \to \. \infty$

\smallskip

\noindent
If true, this would significantly weaker the conditions on the growth
of $f_1$ and~$f_2$, allowing further values of parameters in the examples
from Subsection~\ref{ss:main-ex}.

Heuristically, one expect that the growth of $\Laoi(\PSL_2(\zz_N)))$ behaves
reasonably with $N$.  This implies that if $\Dsi$ contains enough
groups then $\Deqi$ is not empty.
It is possible to prove such statement for a fixed $i$ using that
the group $\Laoi(\PSL_2(\zz)))$ which is reasonably close to a
nice arithmetic group.
However it is for from clear how to do this for all~$i$ large enough.

\subsection{}
In the context of Subsection~\ref{ss:main-sketch}, in order for this
strategy to work, infinitely many groups $H_i$ need to be nontrivial.
However, one can show that taking the limits in Section~\ref{sec_limits}
cannot possibly work if the group is not finitely presented.
The following lemma clarifies our reasoning.

\begin{lemma}
\label{l:fp}
In the context of Section~\ref{sec_limits}, if the limit group $G$ is
finitely presented, then almost all groups $N_i$ are trivial.
Consequently,  $\Gamma = G \ltimes N$ for some finite group~$N$.
\end{lemma}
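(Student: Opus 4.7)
The plan is to translate the statement ``$N_i = 1$'' into a statement about the normal subgroups $R_i = \ker(\F_k \toto G_i)$ of the free group, and then exploit finite presentability of $G$ together with Chabauty convergence. From the definitions one sees directly that
$$
\bigoplus N_i \. = \. \Gamma \cap \bigoplus G_i
\. = \. \bigl\{w \in \F_k : w \in R_j \ \text{for all but one } j\bigr\}\big/\bigcap_i R_i,
$$
so that $N_i$ is trivial if and only if \ts $\bigcap_{j\ne i} R_j \subseteq R_i$\ts. Thus it suffices to show this containment holds for all sufficiently large $i$.

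First I would use finite presentability. Since $G$ is finitely presented, $R$ is the normal closure in $\F_k$ of finitely many relators $r_1,\ldots,r_s$, all of length at most some~$L$. By the Chabauty convergence $\lim G_i = G$, for $i\ge i_0$ we have $R_i \cap B_{\F_k}(L) = R \cap B_{\F_k}(L)$, and in particular $r_1,\ldots,r_s \in R_i$. Since $R_i$ is normal in $\F_k$, this forces the inclusion $R \subseteq R_i$ for every~$i\ge i_0$. Second, I would show unconditionally that $\bigcap_{j\ne i} R_j \subseteq R$ for every fixed~$i$: if $w\in \bigcap_{j\ne i} R_j$, then picking any $j\ne i$ with $j$ large enough (depending on $|w|$) so that $R_j \cap B_{\F_k}(|w|) = R \cap B_{\F_k}(|w|)$ gives $w\in R$. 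Combining these two inclusions yields $\bigcap_{j\ne i} R_j \subseteq R \subseteq R_i$ for every $i\ge i_0$, which is exactly $N_i = 1$.

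For the second sentence of the lemma, once almost all $N_i$ are trivial the kernel of $\pi:\Gamma\toto G$ reduces to the finite direct sum $N = \bigoplus_{i<i_0} N_i$, which is finite in the setting of this paper since each $G_i$ (and hence each $N_i$) is finite; combined with Lemma~\ref{l:product} this gives the short exact sequence $1 \to N \to \Gamma \to G \to 1$ claimed. The only genuinely subtle point is the two-sided use of Chabauty convergence in the previous paragraph --- once $G$ is known to be finitely presented, Chabauty convergence is strong enough both to push $R$ \emph{into} each $R_i$ (using a uniform radius $L$) and to pull a common element of $\bigcap_{j\ne i} R_j$ \emph{into}~$R$ (using a radius $|w|$ tailored to each element). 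I do not anticipate any other real obstacle; the remainder is bookkeeping in the free group.
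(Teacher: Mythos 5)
Your proof is correct and follows essentially the same route as the paper's: use finite presentability plus Chabauty convergence to force $R\subseteq R_i$ for all large~$i$ (equivalently, $G_i$ is a quotient of~$G$), and combine this with the convergence once more to conclude $\bigcap_{j\ne i} R_j\subseteq R_i$, i.e.~$N_i=1$. The paper's published proof is terser --- it stops at ``$G_i$ are images of $G$ for big $i$, which implies $N_i$ trivial'' --- and your two-step free-group argument (Claims~1 and~2) is a cleaner way to spell out the step the paper leaves implicit.

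Two small remarks. First, your displayed identification of $\bigoplus N_i$ has a slip: the middle set should read ``$w\in R_j$ for all but \emph{finitely many}~$j$'' (and be taken as an image, not a quotient set); elements of $\Gamma\cap\bigoplus G_i$ may have several nontrivial coordinates. This does not affect the argument, since what you actually use is the correct equivalence $N_i=1\Longleftrightarrow\bigcap_{j\ne i}R_j\subseteq R_i$. Second, like the paper, you establish only the short exact sequence $1\to N\to\Gamma\to G\to 1$ with $N$ finite, not the splitting $\Gamma=G\ltimes N$ asserted in the lemma's last sentence; the paper's proof has the same omission, and indeed once $R\subseteq R_i$ for $i\ge i_0$ one can obtain a section $G\to\Gamma$ with a little extra work using the quotient maps $\psi_i:G\toto G_i$, so the claim is not a real gap but deserves a sentence if written up.
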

\begin{proof}
Suppose that $G$ has a presentation where all realtors have length at most~$k$.
The if the ball of radius $k$ in the group $H$ coincides with the ball of radius~$k$
in $G$ then $H$ is a homomorphic image of $G$ since all defining relations of $G$
are satisfied in~$H$.
Therefore group $G_i$ are images of $G$ for big $i$ which implies that
$N_i$ are trivial (again for big~$i$).
\end{proof}

Of course, in particular, the lemma shows that the Grigorchuk groups~$\bG_\om$
of intermediate growth are \emph{not} finitely presented, a well known result
in the field~\cite{Gri5,Gri6}.  Similarly the lemma shows that Conjecture~\ref{conjecture}
implies that all groups of intermediate growth are not finitely presented
a classical old open problem~\cite{Gri-one,Gri5}.

\subsection{} \label{ss:fin-growth}
It follows from Shalom and Tao's recent extension~\cite{ST} of the
Gromov's theorem~\cite{Gro}, that every group of growth
$n^{(\log \log n)^{o(1)}}$
must be virtually nilpotent, and thus have polynomial growth.  It is a
major open problem whether this result can be extended to groups of
growth $e^{o(\sqrt{n})}$.  Only partial results have been obtained
in this direction~\cite{Gri-one} (see also~\cite{BGS,Gri4,Gri5}).

\subsection{} \label{ss:fin-sg}
The growth of groups is in many ways parallel to the study of
\emph{subgroup growth} (see~\cite{Lub,LS}).  In this case,
a celebrated construction of Segal~\cite{Seg} (see also~\cite{Neumann}),
showed that the group can have nearly polynomial growth without being
virtually solvable of finite rank.  In other words, the Shalom-Tao
extension of Gromov theorem does not have a subgroup growth analogue.
Interestingly, Segal's construction also uses the Grigorchuk
type groups, and takes the iterated permutational wreath product
of permutation groups; it is one of the motivations behind our construction.

Let us mention here that Pyber completely resolved the
``gap problem'' by describing groups with subgroup growth given
by any prescribed increasing function (within a certain range).
His proof relies on sequences of finite alternating groups
of pairwise different degrees also~\cite{Pyb2}, generalizing
a classical construction of B.~H.~Neumann~\cite{Neu}
(see also~\cite{Pyb1}).

On the other end of the spectrum, let us mention that for subgroup
growth there is no strict lower bound, i.e., for any function $f(n)$ there
is a f.g.~group where the number of subgroups of index $n$ is less than $f(n)$
infinitely often~\cite{KN,Seg}.

\subsection{}
In another variation on the group growth is the \emph{representation growth},
defined via the number $r_n(G)$ of irreducible complex representations of
dimension~$n$, whose kernel has finite index.  In this case there is again
no upper bound for the growth of~$r_n(G)$ (this follows from~\cite{KN}).
We refer to~\cite{LL} for the introduction to the subject, and
to~\cite{Cra} for recent lower bound on representation growth.
See also~\cite{Jai} and~\cite{Voll} for the zeta-function approach.

\subsection{} \label{ss:fin-alg}
The growth of algebras (rather than group) is well understood, and
much more flexibility is possible (see e.g.~\cite{Ufn}).
The results in this paper and~\cite{Bri2}, it seems, suggest
that the growth of groups can be much less rigid than previously
believed.

\subsection{} \label{ss:fin-exp}
In the proof of Lemma~\ref{gen_set_in some_groups}, using the
length $\le 10$ of standard generators in $\{a,b,c,d\}$,
one can get an explicit bound $\rK < 10 \cdot 2000$
(see~\cite[$\S 8$]{Lub-book}  and~\cite[$\S 11.2$]{HLW}).
Bounds in~\cite{BG1}, giving~$\rK'$, can also potentially be
made explicit.

Most recently, it was shown that for primes $p = 1$~mod~$4$
of positive density, \emph{all} Cayley graphs of $\PSL_2(p)$
have universal expansion, a result conjectured
for all primes~\cite{BG}.  These most general bounds have
yet to be made explicit, however.

\subsection{} \label{ss:fin-sof}
A finitely generated group $G$ called \emph{sofic} if it is a limit of
some sequence of finite groups. The existence of finitely generated
non-sofic groups is a well known open problem~\cite[$\S 3$]{Pes}.
%
%
Let us also mention that convergence of Grigorchuk groups was
also studied in~\cite{Gri3}, and a related notion of
\emph{Benjamini-Schramm convergence} for graph sequences~\cite{BS}.

\subsection{}\label{ss:fin-comparison}
As a minor but potentially important difference, let us mention
that the oscillating growth established by Brieussel~\cite{Bri2}
does not give explicit bounds on the ``oscillation times'',
while in this paper we compute them explicitly, up to some global
constants.  Since our result mostly do not overlap with those
in~\cite{Bri2}, it would be useful to quantify the former.

Interestingly, both this paper and~\cite{Bri2} have been obtained
independently and using different tools, they were both originally
motivated by probabilistic applications (see~\cite{KP}),
to the analysis of the \emph{return probability}
and the \emph{rate of escape} of a random walk on groups.
We refer to~\cite{Woe} for a general introduction to the subject.

\vskip.6cm


\noindent
\textbf{Acknowledgements.} \,
These results were obtained at the \emph{Groups and Additive Combinatorics}
workshop in June 2011.  We are grateful to Ben Green and Newton Institute
for the invitation and hospitality.
Both authors were partially supported by the NSF grants,
the second author is also partially supported by the BSF grant.

}


\vskip.8cm

\end{document}